\newtheorem{thm}{Theorem}[section]
\newtheorem{lem}[thm]{Lemma}
\newtheorem{prop}[thm]{Proposition}
\newtheorem{cor}[thm]{Corollary}
\newtheorem{defn}[thm]{Definition}
\theoremstyle{definition}
\newtheorem*{rem}{Remark}
\newcommand{\vs}{\vspace{0.2cm}}
\newcommand{\im}{\textup{im}}
\newcommand{\nn}{\mathbb{N}}
\newcommand{\zz}{\mathbb{Z}}
\newcommand{\rr}{\mathbb{R}}
\newcommand{\cc}{\mathbb{C}}
\newcommand{\vor}{\textrm{\normalfont Vor}}
\newcommand{\del}{\textrm{\normalfont Del}}
\newcommand{\dgm}{\textrm{\normalfont Dgm}}
\newcommand{\conv}{\textrm{\normalfont Conv}}
\newcommand{\lip}{\textrm{\normalfont Lip}}
\newcommand{\amp}{\textrm{\normalfont Amp}}
\newcommand{\pers}{\textrm{\normalfont pers}}
\newcommand{\Pers}{\textrm{\normalfont Pers}}
\newcommand{\dd}{\textrm{\normalfont d}}
\newcommand{\supp}{\textrm{\normalfont supp}}
\newcommand{\diam}{\textrm{\normalfont diam}}
\newcommand{\mesh}{\textrm{mesh}}
\newcommand{\rk}{\textrm{rk}}
\begin{document}

\title{The self-similar evolution of stationary point processes via persistent homology}

\author[1]{Daniel Spitz\thanks{\href{mailto:spitz@thphys.uni-heidelberg.de}{spitz@thphys.uni-heidelberg.de}}}
\affil[1]{Institut f\"ur theoretische Physik, Ruprecht-Karls-Universit\"at Heidelberg, Philosophenweg 16, 69120 Heidelberg, Germany}

\author[2]{Anna Wienhard}
\affil[2]{Max-Planck-Institut f\"ur Mathematik in den Naturwissenschaften, Inselstraße 22, 04103 Leipzig, Germany}

\date{\today}
\setcounter{Maxaffil}{0}
\renewcommand\Affilfont{\itshape\normalsize}

\maketitle

\begin{abstract}
Persistent homology provides a robust methodology to infer topological structures  from point cloud data. 
Here we explore the persistent homology of point clouds embedded into a probabilistic setting, exploiting the theory of point processes. 
We introduce measures on the space of persistence diagrams and the self-similar scaling of a one-parameter family of these.
As the main result we prove a packing relation between the occurring scaling exponents.
\end{abstract}

\section{Introduction}\label{sec1}
\setcounter{footnote}{0}
Persistent homology allows one to infer topological structure from point cloud data. 
It was developed over the past two decades into a robust and versatile methodology, see for example~\cite{chazal2016structure, edelsbrunner2010computational, edelsbrunner2002topological, ghrist2008barcodes, otter2017roadmap, zomorodian2005computing}. 
In recent years there has been a lot of interest in bringing together the classical machinery from algebraic topology with probabilistic approaches, leading to the investigation of random geometric complexes and their limiting behavior, based on random fields and point processes~\cite{bobrowski2012euler, bobrowski2017maximally, bobrowski2017vanishing, edelsbrunner2017expected, hiraoka2018, kahle2011random, owada2017limit, yogeshwaran2015topology, yogeshwaran2017random}, see~\cite{kahle2014topology,bobrowski2018topology} for surveys. 
In particular, in~\cite{chazal2018density} the density of expected persistence diagrams and its kernel based estimation have been discussed.

In this manuscript we follow a similar route. 
We consider point processes and their persistence diagrams. We describe measures on the space of persistence diagrams. 
We show that under appropriate ergodicity assumptions {\em persistence diagram expectation measures} exist. 
Persistence diagram expectation measures are maps from the set of bounded Borel sets in $\mathbb{R}^n$ to the space of Radon measures on the set $\Delta = \{ b,d \in \mathbb{R}^2\, |\, b<d\}$. 

Using persistence diagram expectation measures, we introduce the notion of self-similar scaling for a one-parameter family of such measures.

\vs
\begin{defn}
Let $(\mathfrak{p}(t))_{t\in (T_0,T_1)}$ be a family of non-zero persistence diagram expectation measures, $0<T_0<T_1$. 
For $t\in (T_0,T_1)$ and $A\subset \rr^n$ a bounded Borel set, set $\mathfrak{p}(t,A):=\mathfrak{p}(t)(A)$. 
We say that $(\mathfrak{p}(t))_{t\in (T_0,T_1)}$ \emph{scales self-similarly between $T_0$ and $T_1$ with exponents $\eta_1,\eta_2\in \rr$}, if for all $t,t'\in (T_0,T_1)$ and $B$ an element of the Borel $\sigma$-algebra of $\Delta$:
\begin{equation*}
\mathfrak{p}(t,A)(B) = (t/t')^{-\eta_2} \mathfrak{p}(t',A)((t/t')^{-\eta_1}B),
\end{equation*}
where $\kappa B:=\{(\kappa b,\kappa d)\,|\, (b,d)\in B\}$ for $\kappa\in [0,\infty)$.
\end{defn} 
\vs

For a one-parameter family of persistence diagram expectation measures that scales self-similarly we prove a packing relation between the exponents $\eta_1$ and $\eta_2$, using results for bounded total persistence~\cite{Cohen-Steiner2010}.
Intuitively, if on average persistent homology classes grow, then the representing cycles grow as well, and less of them fit into a constant volume.
\vs

\begin{thm}\label{thm_one}
Let $(\xi(t))_{t\in (T_0,T_1)}$, $0<T_0<T_1$, be a family of stationary and ergodic simple point processes on $\rr^n$ having all finite moments. 
Let $(\mathfrak{p}(t))_t$ be the family of persistence diagram expectation measures associated to  $(\xi(t))_t$. 
Assume that all $(\mathfrak{p}(t))_t$ exist, are non-zero and that the family scales self-similarly between $T_0$ and $T_1$ with exponents $\eta_1,\eta_2\in \rr$. 
Then, if the interval $(T_0,T_1)$ is sufficiently extended, almost surely
\begin{equation*}
\eta_2 = n \eta_1.
\end{equation*}
\end{thm}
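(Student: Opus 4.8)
The plan is to track how the $p$-th total persistence of the expectation measures transforms under the self-similar scaling, and then to pin down the admissible exponents using the bounded total persistence of \cite{Cohen-Steiner2010} at the critical order $p=n$. Throughout I work on the almost-sure event on which all the limits defining the $\mathfrak{p}(t)$ exist, which is the source of the ``almost surely'' in the statement. For $p>0$ write $\Pers_p(\mathfrak{p}(t,A)) := \int_\Delta (d-b)^p\, d\mathfrak{p}(t,A)(b,d)$ for the $p$-total persistence of the Radon measure $\mathfrak{p}(t,A)$ on $\Delta$. First I would feed the defining relation $\mathfrak{p}(t,A)(B) = (t/t')^{-\eta_2}\mathfrak{p}(t',A)((t/t')^{-\eta_1}B)$ into this functional. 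Writing $s := (t/t')^{-\eta_1}$ and $S_\kappa(b,d) := (\kappa b,\kappa d)$ for the diagonal scaling, the relation reads $\mathfrak{p}(t,A) = (t/t')^{-\eta_2}(S_{s^{-1}})_\ast \mathfrak{p}(t',A)$. Since $(d-b)^p$ is homogeneous of degree $p$ under $S_\kappa$, a change of variables gives at once
\[ \Pers_p(\mathfrak{p}(t,A)) = (t/t')^{\eta_1 p - \eta_2}\,\Pers_p(\mathfrak{p}(t',A)), \qquad t,t'\in(T_0,T_1), \]
an exact identity valid for every $p>0$, with both sides simultaneously finite or infinite.

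The heart of the argument is to show that at the critical order $p=n$ the quantity $\Pers_n(\mathfrak{p}(t,A))$ is trapped between two \emph{$t$-independent} positive constants, say $0 < c\,\vol(A) \le \Pers_n(\mathfrak{p}(t,A)) \le C\,\vol(A)$. The upper bound is exactly the bounded total persistence of \cite{Cohen-Steiner2010}: the degree-$n$ total persistence of the \v{C}ech/Delaunay diagram of any finite configuration in $\rr^n$ is controlled by covering data of the ambient region alone, \emph{independently of the configuration} and hence independently of $t$; passing this through the strong law of large numbers underlying $\mathfrak{p}(t,A)$ yields the uniform upper bound. For the lower bound I would exploit that $p=n$ is the scale-balanced order at which total persistence measures, up to a shape-constant fixed by self-similarity, the product (number of features) $\times$ (typical persistence)$^n$, i.e.\ the packing quantity (density) $\times$ (typical spacing)$^n$, which is density-independent; combined with nonvanishing of $\mathfrak{p}(t,A)$ this prevents $\Pers_n$ from degenerating to the diagonal. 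This is precisely the packing relation the theorem advertises.

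Combining the two steps, fix a reference $t'$ and set $P_0 := \Pers_n(\mathfrak{p}(t',A)) \in (0,\infty)$. The scaling identity then forces $(t/t')^{n\eta_1 - \eta_2} = \Pers_n(\mathfrak{p}(t,A))/P_0$ to stay inside the fixed band $[\,c\,\vol(A)/P_0,\; C\,\vol(A)/P_0\,]$ for \emph{all} $t\in(T_0,T_1)$. If $n\eta_1-\eta_2 \neq 0$, then placing $t'$ at the geometric centre of the interval makes the left-hand side range over $[(T_1/T_0)^{-|n\eta_1-\eta_2|/2},\,(T_1/T_0)^{|n\eta_1-\eta_2|/2}]$ as $t$ sweeps $(T_0,T_1)$, and this range exceeds the fixed band once $T_1/T_0$ is large enough, which is exactly what ``the interval is sufficiently extended'' supplies. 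The resulting contradiction forces $n\eta_1-\eta_2 = 0$, i.e.\ $\eta_2 = n\eta_1$.

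The main obstacle is the second step, and within it the uniform \emph{lower} bound on $\Pers_n$: the upper bound is a direct citation of \cite{Cohen-Steiner2010}, but making the critical-order persistence bounded below uniformly in $t$ requires knowing that the scale-balanced persistence content of $\mathfrak{p}(t,A)$ cannot leak to the diagonal as $t$ varies, which is where self-similarity (forcing the normalized diagram shape to be $t$-independent) must be combined with nonvanishing of the measure. A secondary technical point is the critical exponent itself: \cite{Cohen-Steiner2010} is cleanest for $p>n$, so one may alternatively run the scaling argument for a family of orders $p>n$ to obtain $\eta_2 \ge n\eta_1$ in the limit $p\downarrow n$ and recover the reverse inequality from the lower-bound side; here the finite- versus infinite-volume formulation of the total-persistence bound (a genuine $\diam(A)$ versus the thermodynamic density limit) must be handled consistently, since the dependence of the Cohen--Steiner constant on $p$ and on the volume is what keeps the $p>n$ estimates from over-constraining the exponents.
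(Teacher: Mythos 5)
Your overall skeleton---an exact scaling identity for the degree-$p$ total persistence of $\mathfrak{p}(t,A)$, a $t$-uniform bound on that quantity, and a contradiction once $(t/t')^{n\eta_1-\eta_2}$ is forced outside a fixed band for a sufficiently extended interval---is the same strategy as the paper's. The genuine gap is your second step. You need $\Pers_n(\mathfrak{p}(t,A))$ to be trapped between two $t$-independent positive constants at the \emph{critical} order $p=n$, but neither bound is available as stated. The result of \cite{Cohen-Steiner2010} (Proposition~\ref{PropPolynomialGrowth}) only controls $\Pers_{n+\delta}$ for $\delta>0$, with a constant $\frac{n+2\delta}{\delta}$ that blows up as $\delta\downarrow 0$ and, crucially, with a factor $\amp(f)^{\delta}=d_{\max}^{\delta}$ that is \emph{not} independent of the configuration and itself scales like $(t/t')^{\delta\eta_1}$; so there is no configuration-independent upper bound $\Pers_n \le C\,\vol(A)$ to cite. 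Your lower bound is worse off: the justification offered (``this is precisely the packing relation the theorem advertises'') is circular, and you concede in your final paragraph that you do not know how to prove it. As written, the argument therefore does not close.

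The fix, which is what the paper does, is to realize that no lower bound is needed and that one should stay at order $n+\delta$. Lemma~\ref{LemmaPackingLemma} gives $\mathfrak{n}(t,A_k)\,\mathfrak{l}_{n+\delta}(t,A_k)^{n+\delta} \le \frac{c(n+2\delta)}{\delta}\, \mathfrak{d}_{\max}(t,A_k)^{\delta}$ (up to the $O(\epsilon^\delta)$ errors from Proposition~\ref{PropExpectationDegreeQPersistenceLength}, which is also where the ``almost surely'' actually enters---not from existence of $\mathfrak{p}(t)$, which is a deterministic expectation measure, but from replacing sample quantities $d_{\max,\omega},\,l_{q,\omega}$ by their expectation-measure counterparts along a convex averaging sequence). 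By Lemma~\ref{LemmaScalingBehavior} the right-hand side scales as $(t/t')^{\delta\eta_1}$ while the left-hand side scales as $(t/t')^{(n+\delta)\eta_1-\eta_2}\cdot(\text{value at }t')$, so the $\delta$-dependence cancels and one gets the single inequality $(t/t')^{n\eta_1-\eta_2}\le \text{const}(t')$. Since the self-similarity relation is symmetric in $t$ and $t'$, this one-sided inequality already rules out \emph{both} signs of $n\eta_1-\eta_2$: whichever sign occurs, choose the ordering of $t,t'$ that makes the left-hand side large. Positivity of $\mathfrak{n}(t',A_k)\,\mathfrak{l}_{n+\delta}(t',A_k)^{n+\delta}$, i.e.\ nondegeneracy of the measure, is all that is needed from the ``lower bound'' side, and that is supplied by the hypothesis that $\mathfrak{p}(t')$ is nonzero.
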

\vs
In the proof of \Cref{thm_one} we specify what sufficiently extended means. 

Strong laws of large numbers for topological descriptors such as (persistent) Betti numbers are of special interest with multifarious results already established~\cite{goel2019strong, hiraoka2018, krebs2019asymptotic, owada2020limit, roycraft2020bootstrapping,thomas2021functional}. 
We introduce the notion of ergodicity in persistence, which applies to stationary and ergodic point processes as we show.
As a further byproduct we extend the strong law of large numbers for persistent Betti numbers, proven in~\cite{hiraoka2018} for asymptotically large cubes, to convex averaging sequences which fulfil a certain balancedness criterion relating mean width and volume.

The motivation for this work originates from an application of the concept of self-similar scaling for time-dependent persistence diagrams to quantum physical systems in \cite{spitz2020finding}. 
This paper provides a more rigorous mathematical framework for some of the heuristic arguments in~\cite{spitz2020finding}.

The paper is structured as follows. 
In \Cref{SecSelfSimPackingRel} we embed persistence diagrams into the framework of point processes, in order to define persistence diagram measures. 
This leads to the introduction of self-similar scaling for one-parameter families of persistence diagram expectation measures and the formulation of the packing relation.
The latter is proven in \Cref{SecProofPackingRel}, in which we first discuss the existence of persistence diagram expectation measures and of limiting volume-averaged Radon measures.
The related strong law of large numbers for persistent Betti numbers follows.
We show ergodicity in persistence and investigate a range of geometric quantities, which then appear in the actual proof of the packing relation.
\Cref{SecExamples} is devoted to three examples constructed from different point processes. 
Finally, in \Cref{SecFurtherQuestions} we mention further questions.
\vs

\begin{rem}
We derive our results for \v{C}ech complexes, though they are easily extendable to other types of simplicial complexes. Under the assumption of point clouds being in general position, the filtrations of \v{C}ech complexes and of alpha and wrap complexes have isomorphic persistent homology groups~\cite{bauer2017morse}. Thus, all results developed in this work can be directly extended to these.%
\footnote{The proof of \Cref{LemmaErgodicPersistence} actually employs this argument.}

In~\cite{hiraoka2018} a fairly general family of complexes defined through specific measurable functions on finite sets of points is introduced, covering both the filtrations of \v{C}ech complexes and of Vietoris-Rips complexes. Care is required regarding the extension of our results to this family since for instance \Cref{LemmaPackingLemma} only applies to filtered complexes with persistent homology groups isomorphic to those of the filtration of \v{C}ech complexes.
\end{rem}

\section{Self-similarity and the packing relation}\label{SecSelfSimPackingRel}
The generation of point clouds can be embedded into the probabilistic setting via the theory of point processes, necessary statements of which we review in \Cref{SecPointProcesses}.
In \Cref{SecPersDiagMeasures} we briefly introduce persistent homology and persistence diagram expectation measures, which will allow for the probabilistic treatment of persistence diagrams.
This facilitates the formulation of self-similar scaling of a one-parameter family of such persistence diagram expectation measures, given in \Cref{SecSelfSimilar}.
Corresponding scaling exponents are linked by a packing relation, which we finally state there.

\subsection{Point processes}\label{SecPointProcesses}
Point processes are random collections of at most countably many points, identified with random counting measures. We review notions from the theory of point processes~\cite{daley2003introduction, daley2007, last2017lectures}.

Let $S$ be a complete and separable metric space, $\mathcal{B}(S)$ its Borel $\sigma$-algebra and $\mathcal{R}(S)$ the space of Radon measures%
\footnote{A measure $\mu$ on $(S,\mathcal{B}(S))$ is called a \emph{Radon measure}, if $\mu(A)<\infty$ for every relatively compact $A\in\mathcal{B}(S)$. In particular, since $S$ is a complete, separable metric space, a thus defined Radon measure is regular~\cite{daley2003introduction}.} 
on $S$.
We define $\mathcal{B}^n=\mathcal{B}(\rr^n)$ and denote the set of bounded Borel subsets of $\rr^n$ by $\mathcal{B}_b^n$. 
The space of all boundedly finite measures on $\rr^n$ is denoted by $\mathcal{M}^n$ and of the integer-valued ones by $\mathcal{N}^n\subset \mathcal{M}^n$.
We denote the $n$-dimensional Lebesgue measure by $\lambda_n$. 
  
Let $(\Omega, \mathcal{E}, \mathbb{P})$ be a probability space. A \emph{random measure} on $\rr^n$ is a measurable map $\mu:(\Omega,\mathcal{E}, \mathbb{P})\to (\mathcal{M}^n, \mathcal{B}(\mathcal{M}^n))$. A \emph{point process} on $\rr^n$ is a measurable map $\xi: (\Omega,\mathcal{E},\mathbb{P})\to (\mathcal{N}^n, \mathcal{B}(\mathcal{N}^n))$. 

A point process $\xi$ is \emph{simple}, if a.s. all its atoms are distinct; it \emph{has all finite moments}, if for all $k\geq 1$ and $A\in \mathcal{B}^n_b$: $\mathbb{E}[\xi(A)^k] < \infty$. 
We assume that all point processes studied in this work have non-zero first moments, $\mathbb{E}[\xi(A)] > 0$ for $A\in\mathcal{B}_b^n$ with $\lambda_n(A)>0$. 
For a simple point process $\xi$ on $\rr^n$ we define the set of atoms in $A\in\mathcal{B}^n_b$ for a sample $\omega\in\Omega$ as
\begin{equation*}
X_{\xi_\omega} (A) = \left\{ x_i\, \left|\, \xi_\omega(A) = \sum_i \delta_{x_i}(A)\right.\right\}. 
\end{equation*}
The sets $X_{\xi_\omega} (A)$ a.s. form point clouds for $A\in \mathcal{B}_b^n$, i.e., finite subsets of $\rr^n$.
When referring to $X_{\xi_\omega}(A)$ as point clouds, we omit potential multiplicities of atoms, which can occur with probability zero.
We often omit the sample $\omega$ from notations.

A random measure $\mu$ on $\rr^n$ is \emph{stationary}, if for all $x\in \rr^n$ the distributions of $\mu$ and $\theta_x \mu$ coincide, where $(\theta_x \mu)(A):= \mu(A+x)$ for all $A\in \mathcal{B}^n_b$. 
A stationary random measure $\mu$ is \emph{ergodic}, if $\mathbb{P}(\mu\in \mathcal{A})\in \{0,1\}$ for all Borel sets $\mathcal{A}$ of $\mathcal{M}^n$ that are invariant under translation by $x$ for all $x\in \rr^n$. 

A sequence $\{A_k\}\subset \mathcal{B}^n_b$ is a \emph{convex averaging sequence} if
\begin{align*}
&\textrm{(i) each $A_k$ is convex,\qquad (ii) $A_k\subseteq A_{k+1}$ for all $k$, \qquad(iii) $\lim_{k\to\infty} r(A_k)=\infty$},
\end{align*}
where $r(A) := \sup \{r \, |\, A \textrm{ contains a ball of radius }r\}$.
\vs

\begin{prop}[Corollary 12.2.V in  \cite{daley2007}]\label{PropErgodicityMeanDensity}
Let $\{A_k\}$ be a convex averaging sequence. 
If a random measure $\xi$ on $\rr^n$ is stationary, ergodic and has finite expectation measure with mean density $m = \mathbb{E}[\xi([0,1]^n)]=m\lambda_{n}([0,1]^n)$, then $\xi(A_k)/\lambda_n(A_k) \to m$ a.s. as $k\to\infty$.
\end{prop}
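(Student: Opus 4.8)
The plan is to read the statement as a spatial (multiparameter) pointwise ergodic theorem and to reduce it to the standard ergodic theorem for measure-preserving $\rr^n$-actions along regular Følner sequences. First I would pass to the canonical dynamical system: let $\mathbb{P}_\xi$ be the law of $\xi$ on $(\mathcal{M}^n,\mathcal{B}(\mathcal{M}^n))$ and let $\theta_x$ act by $(\theta_x\mu)(A)=\mu(A+x)$. Stationarity makes $\theta$ measure-preserving, and ergodicity of $\xi$ says precisely that the $\theta$-invariant $\sigma$-algebra $\mathcal{I}$ is $\mathbb{P}_\xi$-trivial. The observable $f(\mu):=\mu([0,1)^n)$ lies in $L^1(\mathbb{P}_\xi)$ with $\mathbb{E}[f]=m$, since $\xi$ has finite expectation measure.

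Next I would invoke the pointwise ergodic theorem for the amenable group $\rr^n$: for $f\in L^1$ and an averaging sequence $\{A_k\}$ that is Følner and satisfies Tempelman's regularity condition, one has $\frac{1}{\lambda_n(A_k)}\int_{A_k}f(\theta_x\mu)\,\dd x\to\mathbb{E}[f\mid\mathcal{I}]$ almost surely, and by ergodicity the right-hand side is the constant $m$. The two hypotheses I must verify for a convex averaging sequence are: (a) the Følner property, which follows from condition (iii), because for a convex body containing a ball of radius $r(A_k)$ the divergence theorem yields the isoperimetric bound $S(A_k)\le n\,\lambda_n(A_k)/r(A_k)$, so the surface-to-volume ratio, and hence $\lambda_n((A_k+x)\,\triangle\,A_k)/\lambda_n(A_k)$, tends to $0$; and (b) the regularity bound $\lambda_n(A_k-A_k)\le C\,\lambda_n(A_k)$, which holds with $C=\binom{2n}{n}$ by the Rogers--Shephard inequality for the difference body of a convex set, together with the nestedness $A_j\subseteq A_k$ for $j\le k$.

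It then remains to identify the smoothed average $\frac{1}{\lambda_n(A_k)}\int_{A_k}\theta_x\mu([0,1)^n)\,\dd x$ with the raw normalized count $\mu(A_k)/\lambda_n(A_k)$. By Fubini this smoothed average equals $\frac{1}{\lambda_n(A_k)}\int w_k(y)\,\mu(\dd y)$ for a weight $w_k(y)=\lambda_n(A_k\cap(y-[0,1)^n))\in[0,1]$ that equals $1$ at points well inside $A_k$ and $0$ outside a unit-width neighborhood of $\partial A_k$. Hence its difference from $\mu(A_k)/\lambda_n(A_k)$ is supported in a boundary layer of relative volume $O(S(A_k)/\lambda_n(A_k))\to0$, and I would control the $\mu$-mass of that layer almost surely by applying the same ergodic theorem to the boundary contribution, or by sandwiching $f$ between a slightly enlarged and a slightly shrunken copy of $A_k$.

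I expect the main obstacle to be exactly this last transfer step, together with the precise admissibility of general convex averaging sequences: the ergodic theorem delivers the \emph{smoothed} average, whereas the statement concerns the unsmoothed count $\xi(A_k)$, and the almost-sure vanishing of the boundary correction must be extracted from an ergodic or maximal estimate rather than from the merely in-expectation smallness of the boundary layer. Showing that convexity and inradius growth supply both the Følner and the Tempelman conditions in their almost-sure form is where the real work lies; condition (iv), unnecessary here, enters instead in the proof of Theorem~\ref{ThmExistenceLimitingRadonMeasure}.
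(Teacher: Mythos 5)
The paper offers no proof of this proposition at all: it is imported verbatim as Corollary 12.2.V of \cite{daley2007}, so there is nothing internal to compare your argument against. Your sketch is, however, a sound reconstruction of the standard proof of that corollary, and it follows essentially the same route as the cited source: pass to the canonical system on $(\mathcal{M}^n,\mathcal{B}(\mathcal{M}^n))$ with the translation action $\theta_x$, apply the pointwise ergodic theorem for $\rr^n$-actions to $f(\mu)=\mu([0,1)^n)$, and then convert the smoothed average into the raw count. The two hypotheses you check are the right ones and your justifications are valid: the F\o lner property does follow from condition (iii) via the cone-volume bound $S(A_k)\le n\,\lambda_n(A_k)/r(A_k)$ for a convex body with inradius $r(A_k)$, and the Tempelman regularity bound follows from nestedness together with the Rogers--Shephard inequality $\lambda_n(A_k-A_k)\le\binom{2n}{n}\lambda_n(A_k)$. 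You also correctly identify the genuinely delicate step, namely that the ergodic theorem controls $\frac{1}{\lambda_n(A_k)}\int_{A_k}\theta_x\mu([0,1)^n)\,\dd x$ rather than $\mu(A_k)/\lambda_n(A_k)$; the sandwich you propose is the standard resolution, and to make it airtight you should run the ergodic theorem on the outer and inner parallel bodies $A_k^{+}=A_k+[0,1)^n$ and $A_k^{-}=\{y:\ y-[0,1)^n\subseteq A_k\}$ (both convex averaging sequences with $\lambda_n(A_k^{\pm})/\lambda_n(A_k)\to 1$ by Steiner's formula and $r(A_k)\to\infty$), which yields $m\le\liminf_k\mu(A_k)/\lambda_n(A_k)\le\limsup_k\mu(A_k)/\lambda_n(A_k)\le m$ almost surely rather than merely in expectation. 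You are also right that property (iv) of the paper's definition plays no role here; it is only needed later for Theorem~\ref{ThmExistenceLimitingRadonMeasure}. In short: the proposal is correct, but it proves a result the paper deliberately treats as a black box.
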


\subsection{Persistence diagram expectation measures}\label{SecPersDiagMeasures}
We briefly review notions from persistent homology before defining persistence diagram expectation measures.
For a general introduction to algebraic topology we refer to~\cite{hatcher2005, may1999concise, munkres1984elements}; for a thorough introduction to computational topology the interested reader may consult~\cite{edelsbrunner2010computational, otter2017roadmap}.

For a given point cloud $X\subset \rr^n$ we consider its filtration of \v{C}ech complexes $(\check{C}_r(X))_r$.%
\footnote{The \v{C}ech complex of $X$ of radius $r$ is defined as the abstract simplicial complex $\check{C}_r(X)=\{\sigma\subseteq X\,|\, \bigcap_{x\in\sigma}B_r(x)\neq \emptyset\}$.} 
We denote the $m$-skeleton of $\check{C}_r(X)$ by $\check{C}_r(X)^m$, which consists of all simplices of $\check{C}_r(X)$ up to and including dimension $m$.
The map of homology groups induced by the inclusion $\check{C}_r(X)\hookrightarrow \check{C}_s(X)$, $r\leq s$, is denoted by $\iota^{r,s}_\ell:H_\ell(\check{C}_r(X))\to H_\ell(\check{C}_s(X))$, where we consider homology with coefficients in an arbitrary field $\mathbb{F}$. 
The \emph{$\ell$-th persistent Betti numbers} are defined as $\beta^{r,s}_\ell(\mathcal{C}(X)):= \rk (\im (\iota_\ell^{r,s}))$.

We set $\Delta:=\{(b,d)\in\rr^2\,|\, b<d\}$ and denote the space of persistence diagrams by $\mathscr{D}$, which is the space of finite multisets of points in $\Delta$.
By finiteness of $X$ the persistent homology of the filtration $\{\check{C}_r(X)\}$ is fully described by its persistence diagram $\dgm_\ell(X)$.%
\footnote{The persistence module $H_\ell(\mathcal{C}(X))=(H_\ell(\check{C}_r(X)), \iota^{r,s}_\ell)_{s\geq r}$ is tame, such that the structure theorem~\cite{zomorodian2005computing} applies, which yields the isomorphism of $H_\ell(\mathcal{C}(X))$ with its persistence diagram.}  
A persistence pair $(b,d)\in\dgm_\ell(X)$ corresponds to an $\ell$-dimensional homology class being present in the \v{C}ech complexes for all radii in $[b,d)$, with $b$ its birth radius, $d$ its death radius and $\pers((b,d)):=d-b$ its persistence. 
We ignore persistence pairs with zero persistence in persistence diagrams. 
While this is consistent for our work, persistence pairs with zero persistence can be important e.g. for the computation of the bottleneck distance.

In the probabilistic setting we can consider persistence diagrams for point clouds generated by point processes.
For this let $\xi$ be a simple point process on $\rr^n$. 
Then for any $A\in \mathcal{B}^n_b$ and $\omega\in\Omega$ we have the point cloud $X_{\xi_\omega}(A)$. 
We denote by $D_A( X_{\xi_\omega}(A)): = \bigcup_{\ell=0}^{n-1} \dgm_\ell(X_{\xi_\omega}(A))$ the corresponding persistence diagram of the filtration of the \v{C}ech complexes. 
This leads to point processes on the space of persistence diagrams $\mathscr{D}$, with persistence diagrams of individual point cloud samples as atom sets.
\vs

\begin{defn}
Given a simple point process $\xi$ and $A\in\mathcal{B}_b^n$, the map 
\begin{equation*}
\rho_\omega(A):=\sum_{x\in D_A( X_{\xi_\omega}(A))} \delta_{x}\qquad \textrm{for all }\omega\in\Omega
\end{equation*}
defines a point process $\rho_\cdot(A)$ on $\Delta$. 
The map $\rho_\cdot: \Omega\times \mathcal{B}^n_b\to \mathcal{N}(\Delta)$ is called a \emph{persistence diagram measure}. 
If it exists, its first moment measure defines
\begin{equation*}
\mathfrak{p}(A):= \mathbb{E}[\rho_\omega(A)]. 
\end{equation*}
The map $\mathfrak{p}: \mathcal{B}^n_b \to \mathcal{R}(\Delta)$, $A\mapsto \mathfrak{p}(A)$ is called a \emph{persistence diagram expectation measure}.
\end{defn}

\subsection{Self-similar scaling and the packing relation}\label{SecSelfSimilar}
We can now introduce the notion of self-similar scaling for a one-parameter family of persistence diagram expectation measures. 
The additional parameter can be interpreted, for instance, as the time in physics applications~\cite{spitz2020finding}.
Self-similar scaling describes a power-law parameter dependence of persistence diagram expectation measures.
One of the exponents, $\eta_1$, encodes the time-dependence of persistence length scales, while the other, $\eta_2$, describes how the overall number of persistence pairs behaves.
More precisely, this interpretation is based on \Cref{LemmaScalingBehavior}.
\vs

\begin{defn}\label{DefiScaling}
Let $(\mathfrak{p}(t))_{t\in (T_0,T_1)}$ be a family of non-zero persistence diagram expectation measures, $0<T_0<T_1$. 
For $t\in (T_0,T_1)$, $A\in\mathcal{B}^n_b$ set $\mathfrak{p}(t,A):=\mathfrak{p}(t)(A)$ and let $\{A_k\}$ be a convex averaging sequence. 
We say that $(\mathfrak{p}(t))_{t\in (T_0,T_1)}$ \emph{scales self-similarly between $T_0$ and $T_1$ with exponents $\eta_1,\eta_2\in \rr$}, if for all $t,t'\in (T_0,T_1)$, $B\in \mathcal{B}(\Delta)$ and $k$ sufficiently large depending on the sequence $(A_k)$,
\begin{equation*}
\mathfrak{p}(t,A_k)(B) = (t/t')^{-\eta_2} \mathfrak{p}(t',A_k)((t/t')^{-\eta_1}B),
\end{equation*}
where $\kappa B:=\{(\kappa b,\kappa d)\,|\, (b,d)\in B\}$ for $\kappa\in [0,\infty)$.
\end{defn}
\vs

A relation between the scaling exponents $\eta_1$ and $\eta_2$ can be established.
Intuitively, it is based on the bounded packing of cycles representing persistent homology classes, less of which fit into a given constant volume if persistence length scales grow.
\vs

\begin{thm}[The packing relation]\label{ThmPackingRelation}
Let $(\xi(t))_{t\in (T_0,T_1)}$, $0<T_0<T_1$, be a family of stationary and ergodic simple point processes on $\rr^n$ having all finite moments. 
Let $(\mathfrak{p}(t))_t$ be the family of persistence diagram expectation measures computed from $(\xi(t))_t$. 
Assume that all $(\mathfrak{p}(t))_t$ exist, are non-zero and that the family scales self-similarly between $T_0$ and $T_1$ with exponents $\eta_1,\eta_2\in \rr$. 
Then, if the interval $(T_0,T_1)$ is sufficiently extended as detailed in the proof, we a.s. find
\begin{equation*}
\eta_2 = n \eta_1.
\end{equation*}
\end{thm}
\vs
The lengthier proof of this theorem is deferred to \Cref{SecProofPackingRel}.
The packing relation can have applications throughout many-body physics whenever self-similar scaling with some parametric dependence occurs for persistent homology quantifiers, e.g. in persistent homology studies of critical scaling phenomena and non-thermal fixed points~\cite{spitz2020finding,Sale:2022qfn,sale2022quantitative,Sehayek:2022lxf}.

\section{Proof of the packing relation}\label{SecProofPackingRel}
This section is devoted to the proof of the packing relation, which proceeds in a number of steps.
First, we show in \Cref{SecExistencePersDiagExpecMeasure} that persistence diagram expectation measures can indeed exist.
Then, in \Cref{SecLimitingRadonMeasures} an extension of the existence of limiting Radon measures for volume-averaged persistence diagram (expectation) measures to so-called balanced convex averaging sequences is proven, given in~\cite{hiraoka2018} for averaging along cubes.
As a corollary the related extension of the strong law of large numbers for persistent Betti numbers of~\cite{hiraoka2018} follows.
Subsequently, in \Cref{SecErgodicityPers} we show a persistent homology variant of ergodicity for stationary and ergodic point processes to generate point clouds.
In \Cref{SecGeometricQuantities} we define a number of geometric quantities for persistence diagram (expectation) measures, which show up in the proof of the packing relation. 
The latter is finally presented in \Cref{SecProofPackingRelFinal} and makes use of the previous results.

\subsection{Existence of persistence diagram expectation measures}\label{SecExistencePersDiagExpecMeasure}
Persistence diagram expectation measures exist for stationary and ergodic point processes on $\rr^n$, as the following lemma shows.
\vs

\begin{lem}\label{LemmaExistencePersDiagExpecMeasure}
Let $\xi$ be a simple stationary and ergodic point process on $\rr^n$ having all finite moments. Then the corresponding persistence diagram expectation measure exists.
\end{lem}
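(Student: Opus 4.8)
The plan is to show that $\mathfrak{p}(A)=\mathbb{E}[\rho_\omega(A)]$ is a well-defined Radon measure on $\Delta$ for every $A\in\mathcal{B}_b^n$. By the definition of a Radon measure recalled in the excerpt, the first moment measure of the point process $\rho_\cdot(A)$ is Radon precisely when it assigns finite mass to every relatively compact Borel set, so it suffices to prove $\mathbb{E}[\rho_\omega(A)(B)]<\infty$ for all such $B$. In fact I would establish the stronger statement that $\mathfrak{p}(A)$ is a finite measure, by bounding the expected total number of persistence pairs $\mathbb{E}[n(X_\xi(A))]$, where $n(X_\xi(A))=\#\bigcup_{\ell=0}^{n-1}\dgm_\ell(X_\xi(A))=\rho_\omega(A)(\Delta)$.

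First I would fix $A$, write $N:=\xi(A)=\#X_\xi(A)$, and record an elementary geometric bound on the size of the persistence diagram. Persistent homology in degrees $0,\dots,n-1$ is computed from the chain groups of $\check{C}_r(X)$ up to dimension $n$, so the number of persistence pairs is at most the number of simplices of dimension at most $n$ in the \v{C}ech complex. Since each $k$-simplex is a $(k+1)$-element subset of $X$, this count is bounded by $\sum_{k=0}^{n}\binom{N}{k+1}\le C_n\,N^{n+1}$ for a constant $C_n$ depending only on $n$. Hence $n(X_\xi(A))\le C_n\,\xi(A)^{n+1}$ pointwise in $\omega$.

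Next I would take expectations and invoke the moment hypothesis. Because $\xi$ has all finite moments, $\mathbb{E}[\xi(A)^{n+1}]<\infty$, whence $\mathbb{E}[n(X_\xi(A))]\le C_n\,\mathbb{E}[\xi(A)^{n+1}]<\infty$. Since $\rho_\omega(A)(B)\le\rho_\omega(A)(\Delta)=n(X_\xi(A))$ for every Borel $B\subseteq\Delta$, the set function $B\mapsto\mathbb{E}[\rho_\omega(A)(B)]$ is dominated by a finite constant and is countably additive by monotone convergence (Tonelli), hence defines a finite—in particular Radon—measure $\mathfrak{p}(A)$ on $\Delta$. Measurability of $\omega\mapsto\rho_\omega(A)(B)$ is already guaranteed by $\rho_\cdot(A)$ being a point process on $\Delta$, so the expectation is meaningful and the persistence diagram expectation measure exists.

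The main obstacle is the geometric bound relating the cardinality of the persistence diagram to the number of points, and ensuring it is polynomial rather than exponential, so that a single finite moment controls the expectation. The crude \v{C}ech simplex count used above is wasteful but sufficient; one could sharpen it using the general-position and Delaunay-size estimates from the proof of Lemma \ref{LemmaErgodicPersistence}, which give $n(X_\xi(A))=\Theta(\#X_\xi(A))$ almost surely after the sparse-sample reduction, but this refinement is unnecessary for mere existence. I would also remark that stationarity and ergodicity do not actually enter this finiteness argument—only simplicity and finiteness of the $(n+1)$-st moment are used—though they are retained in the statement for coherence with the limiting results that follow.
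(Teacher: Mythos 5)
Your proof is correct, and it takes a genuinely different (and more elementary) route than the paper's. Both arguments begin from the same observation that $\rho_\omega(A)(B)\le n(X_{\xi_\omega}(A))\le \#\bigl(\check{C}_\infty(X_{\xi_\omega}(A))^n\bigr)$, but they then diverge. The paper bounds the number of $\ell$-simplices by a local neighborhood count, $\sum_{x\in X_{\xi_\omega}(A)}\xi(B_{2r}(x))^{\ell-1}$, which forces it to introduce the random terminal radius $R(\xi_\omega,A)$ at which the filtration stabilizes and then to appeal to stationarity, ergodicity and all finite moments to assert finiteness of the resulting expectation (a step that is stated rather tersely there, since the radius inside the expectation is itself configuration-dependent). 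You instead bound the $n$-skeleton globally by $\sum_{k=0}^{n}\binom{\xi(A)}{k+1}\le C_n\,\xi(A)^{n+1}$ and close the argument with the single moment bound $\mathbb{E}[\xi(A)^{n+1}]<\infty$. This is cruder quantitatively — for a sparse or locally finite-intensity process the paper's local counts are far smaller than the binomial count — but it is entirely sufficient for existence, it sidesteps the random-radius subtlety, and it correctly isolates the hypotheses actually needed: simplicity and finiteness of the $(n+1)$-st moment, with stationarity and ergodicity playing no role here. Your remark to that effect is accurate and is a genuine sharpening of the lemma as stated.
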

\vs

\begin{proof}
Let $A\in \mathcal{B}^n_b$ and $B\in \mathcal{B}(\Delta)$ be bounded, $\omega\in\Omega$. Let $F_\ell(X_{\xi_\omega}(A),r)$ be the number of $\ell$-simplices in $\check{C}_r(X_{\xi_\omega}(A))$ and $F_\ell(\xi,r;A)$ the number of $\ell$-simplices in $\check{C}_r(X_{\xi_\omega}(\rr^n))$ with at least one vertex in $A$. 
Let $\# K$ denote the number of simplices in a simplicial complex $K$.
Using the $n$-skeleton of the \v{C}ech complex, $\check{C}_r(X_{\xi_\omega}(A))^n$, we compute for $r\geq 0$ sufficiently large,
\begin{align*}
\rho_\omega(A)(B) \leq n(X_{\xi_\omega}(A)) \leq&\; \# (\check{C}_r(X_{\xi_\omega}(A))^n)\\
=&\; \sum_{\ell=0}^n F_\ell(X_{\xi_\omega}(A),r) \\
\leq&\; \sum_{\ell=0}^n F_\ell(\xi, r; A)\leq \sum_{\ell=0}^n \sum_{x\in X_{\xi_\omega}(A)} \binom{\xi(B_{2\ell r}(x))}{\ell + 1},
\end{align*}
which holds since all vertices of an $\ell$-simplex containing a vertex $x\in A$ are contained in $X_{\xi_\omega}(\rr^n)\cap B_{2\ell r}(x)$, and all $\ell$-simplices are constructed from $\ell+1$ points in this intersection.
If $\ell + 1 > \xi(B_{2\ell r}(x))$, the binomial coefficient is zero.
Let $R(\xi_\omega,A)<\infty $ be the maximal $r>0$, such that for any $\epsilon>0$, $\epsilon < r$:
\begin{equation*}
\check{C}_r(X_{\xi_\omega}(A)) \neq \check{C}_{r-\epsilon}(X_{\xi_\omega}(A)).
\end{equation*}
Indeed, $R(\xi_\omega,A)$ exists, since the point clouds $X_{\xi_\omega}(A)$ are always finite, hence the \v{C}ech complex $\check{C}_r(X_{\xi_\omega}(A))$ changes only finitely often as $r$ increases.
Then we find
\begin{equation*}
\rho_\omega(A)(B) \leq n(X_{\xi_\omega}(A)) \leq \#(\check{C}_\infty (X_{\xi_\omega}(A))^n)\leq \sum_{\ell=0}^n \sum_{x\in X_{\xi_\omega}(A)} \binom{\xi(B_{2\ell R(\xi_\omega,A)}(x))}{\ell +1}.
\end{equation*}
Taking expectations yields
\begin{equation*}
\mathfrak{p}(A)(B) \leq \sum_{\ell=0}^n \mathbb{E}\left[ \sum_{x\in X_{\xi_\omega}(A)} \binom{\xi(B_{2\ell R(\xi_\omega,A)}(x))}{\ell + 1} \right].
\end{equation*}
The expectation value on the right-hand side of this expression exists and is finite for bounded $A$, since $\xi$ is stationary, ergodic and has all finite moments. 
Thus, $\mathfrak{p}(A)(B)$ is finite and in particular exists.
\end{proof}

Concerning the structure of persistence diagram (expectation) measures, we note the following.
\vs

\begin{prop}\label{PropExistenceLebesgueDensities}
Let $\rho$ be a persistence diagram measure, $\mathfrak{p}$ the corresponding persistence diagram expectation measure and $A\in \mathcal{B}^n_b$, $\omega\in\Omega$. 
Then, there exist Borel measurable functions $\tilde{\rho}_{\omega,A},\tilde{\mathfrak{p}}_A:\Delta\to \rr_+$ and singular measures $\rho_{\omega,s}(A), \mathfrak{p}_s(A)$, such that for all $B\in \mathcal{B}(\Delta)$:
\begin{subequations}
\begin{align*}
\rho_\omega(A)(B) = &\; \int_B \tilde{\rho}_{\omega,A}(x) \lambda_2(\dd x) + \rho_{\omega,s}(A)(B),\\
\mathfrak{p}(A)(B) = &\; \int_B \tilde{\mathfrak{p}}_A(x) \lambda_2(\dd x) + \mathfrak{p}_s(A)(B),
\end{align*}
\end{subequations}
$\lambda_2$ the Lebesgue measure on $\Delta$.
\end{prop}
\begin{proof}
This is a direct consequence of applying first the Lebesgue decomposition theorem and then the Radon-Nikodym theorem for absolutely continuous measures~\cite{mattila1999geometry}. 
Focussing on $\mathfrak{p}(A)$, we explicitly set
\begin{equation*}
\tilde{\mathfrak{p}}_A(x):=\lim_{r \searrow 0} \frac{\mathfrak{p}(A)(B_r(x))}{\lambda_2(B_r(x))}
\end{equation*}
and
\begin{equation*}
S:=\left\{ x : \lim_{r \searrow 0} \frac{\mathfrak{p}(A)(B_r(x))}{\lambda_2(B_r(x))} = \infty\right\}.
\end{equation*}
Then we set $\mathfrak{p}_s(A)(B)=\mathfrak{p}(A)(B\cap S)$ and find $\lambda_2(S)=0$. 
Fully analogous constructions lead to the decomposition of $\rho_\omega(A)$.
\end{proof}

\begin{rem}
In~\cite{chazal2018density} it is shown that under particular assumptions on the simplicial complex filtration, covering for instance the filtration of \v{C}ech complexes, the singular contribution $\mathfrak{p}_s$ to the persistence diagram expectation measure $\mathfrak{p}$ can be absorbed into the density $\tilde{\mathfrak{p}}$.
\end{rem}
\vs

\begin{rem}
The measurable Lebesgue density corresponding to the non-singular contribution to the persistence diagram (expectation) measure appearing in \Cref{PropExistenceLebesgueDensities} can be identified with the so-called (asymptotic) persistence pair distribution defined in~\cite{spitz2020finding}. 
In particular, if the persistence diagram (expectation) measure exists, then the density exists as well, though it might be zero.
\end{rem}

\subsection{Existence of limiting volume-averaged Radon measures}\label{SecLimitingRadonMeasures}
In this section we state an extension of the existence of large-volume averages of persistence diagram expectation measures to so-called balanced convex averaging sequences, established in~\cite[Theorem 1.11]{hiraoka2018} for averaging along cubes. 
The strong law of large numbers for persistent Betti numbers can be extended analogously.
The proofs are given in \Cref{AppendixProofsLimitTheorems}. 
They make use of the results for cubical averaging sequences~\cite{hiraoka2018} and convex geometry.
\vs

\begin{defn}
A convex averaging sequence $\{A_k\}$ is called \emph{balanced}, if
\begin{equation*}
W_{n-1}(A_k)=O(\lambda_n(A_k)^{1/n})\textrm{ for sufficiently large }k,
\end{equation*}
where $W_{n-1}$ denotes the $(n-1)$-st quermassintegral.
The quermassintegrals $W_i(C)$, $i=0,\dots,n$, of a convex body $C$ in $\rr^n$ are given by \cite{gruber2007}
\begin{equation}\label{EqQuermassintegralDef}
W_i(C) = V(\underset{n-i}{\underbrace{C,\dots,C}},\underset{i}{\underbrace{B_1(0),\dots,B_1(0)}}),
\end{equation}
where $V$ denotes the mixed volume of $n$ convex bodies in $\rr^n$ and $B_r(x)$ denotes the closed ball of radius $r$ around $x\in\rr^n$. 
They are further used in \Cref{AppendixProofsLimitTheorems}.
\end{defn}
\vs

\begin{rem}
Balancedness of a convex averaging sequence is similar to the vanishing relative boundary condition for sequences of Borel subsets of $\rr^n$ in~\cite{penrose2001central}.
To be balanced is a fairly general condition. 
For instance, sequences of growing cubes, simplices, polyhedra or balls fulfill this property. 
Balancedness is required in the proof of \Cref{ThmExistenceLimitingRadonMeasure} in order to exclude sequences of increasingly flat convex bodies for which the ratio of the $n$-th power of the mean width to volume does not converge to zero for $k\to\infty$~\cite{ball1991volume}.
\end{rem}
\vs

The following theorem establishes the existence of a limiting Radon measure, towards which volume-averaged persistence diagram expectation measures for balanced convex averaging sequences converge.
\vs
\begin{thm}\label{ThmExistenceLimitingRadonMeasure}
Let $\xi$ be a simple point process on $\rr^n$ having all finite moments and $\{A_k\}$ a balanced convex averaging sequence. 
If $\xi$ is stationary, then there exists a unique Radon measure $\mathfrak{P}\in \mathcal{R}(\Delta)$, such that
\begin{equation*}
\frac{\mathfrak{p}(A_k)}{\lambda_n(A_k)}\overset{v}{\longrightarrow} \mathfrak{P}\quad \textrm{ for }k\to\infty,
\end{equation*}
where $\mathfrak{p}$ is the persistence diagram expectation measure corresponding to $\xi$ and $\overset{v}{\to}$ denotes vague convergence.
\end{thm}
\vs

The existence of limiting Radon measures for general balanced convex averaging sequences is of interest for applications in the natural sciences, where large-volume asymptotics are not only taken via nested sequences of cubes.
An example is given by thermodynamic limits of spherically shaped systems in quantum many-body physics~\cite{Gauthier:2018skd,doi:10.1126/science.aat5793}.

Similar to \Cref{ThmExistenceLimitingRadonMeasure}, a stronger statement can be established for persistent Betti numbers.
\vs

\begin{cor}[Strong law of large numbers for persistent Betti numbers]\label{CorStrongLawPersBetti}
Let $\xi$ be a stationary simple point process having all finite moments and $\{A_k\}$ a balanced convex averaging sequence. 
Given $\omega\in\Omega$, we set $X_k:=X_{\xi_\omega}(A_k)$. 
Then, for any $0\leq r\leq s < \infty$ and $\ell =0,1,\dots, n$ there exists a constant $\hat{\beta}^{r,s}_\ell$, such that
\begin{equation*}
\frac{\mathbb{E}[\beta^{r,s}_\ell (\mathcal{C}(X_k))]}{\lambda_n(A_k)} \to \hat{\beta}^{r,s}_\ell \qquad \textrm{ for } k\to\infty.
\end{equation*}
Additionally, if $\xi$ is ergodic, then a.s.
\begin{equation*}
\frac{\beta^{r,s}_\ell (\mathcal{C}(X_k))}{\lambda_n(A_k)} \to \hat{\beta}^{r,s}_\ell \qquad \textrm{ for } k\to\infty.
\end{equation*}
\end{cor}
\vs

In the spirit of \Cref{ThmExistenceLimitingRadonMeasure} and \Cref{CorStrongLawPersBetti}, several related results have been established for particular point processes. 
For instance, for classes of binomial and Poisson point processes the strong law of large numbers for Betti numbers in the thermodynamic regime~\cite{goel2019strong} and the asymptotic normality of persistent Betti numbers~\cite{krebs2019asymptotic} have been shown. 
In~\cite{owada2020limit} limit theorems have been established for Poisson processes in sparse, Poisson and critical regimes.
A bootstrapping procedure for persistent Betti numbers has been developed in~\cite{roycraft2020bootstrapping}, and~\cite{thomas2021functional} considers the Euler characteristic for the Poisson process in the critical regime.

\subsection{Ergodicity in persistence}\label{SecErgodicityPers}
We define a notion of ergodicity for point processes on the space of persistence diagrams.
\vs
\begin{defn}
Let $\xi$ be a stationary simple point process on $\rr^n$ with finite expectation measure, $\{A_k\}$ a convex averaging sequence, and $n(X_k):=\# \bigcup_{\ell=0}^{n-1} \dgm_\ell (X_k)$, $X_k:=X_\xi(A_k)$, for all $k$. 
We say that $\xi$ is \emph{ergodic in persistence} if a.s. $n(X_k)/\lambda_n(A_k)$ converges to a finite number as $k\to\infty$.
\end{defn}
\vs

One can establish that stationary and ergodic point processes lead to ergodicity in persistence.
\vs

\begin{lem}\label{LemmaErgodicPersistence}
Let $\xi$ be a simple point process on $\rr^n$ having all finite moments. 
If $\xi$ is stationary and ergodic, then $\xi$ is ergodic in persistence.
\end{lem}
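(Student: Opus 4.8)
The plan is to reduce the statement to a strong law for the normalized pair count
\[
g_k := \frac{n(X_k)}{\lambda_n(A_k)},
\]
namely that almost surely $g_k \to c$ for some finite $c>0$. Indeed, the quantity controlled in the definition equals
\[
\frac{n(X_l)}{n(X_k)}\,\frac{\lambda_n(A_k)}{\lambda_n(A_l)} = \frac{g_l}{g_k},
\]
so once $g_k \to c \in (0,\infty)$ almost surely, for every $\epsilon>0$ there is an $N$ with $\lvert 1 - g_l/g_k\rvert < \epsilon$ for all $k,l\geq N$, which is exactly ergodicity in persistence. Thus the entire content is the almost sure convergence of $g_k$ to a positive constant.

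I would first secure positivity and a lower bound from degree zero alone. Since $\xi$ is simple, at radius $0$ the complex $\check{C}_0(X_\xi(A_k))$ consists of $\xi(A_k)$ isolated vertices, while for large radius it is the full simplex on these points and hence connected; consequently every degree-zero class except one dies at a finite radius, so $\#\dgm_0(X_\xi(A_k)) = \xi(A_k) - 1$. As $n(X_k) \geq \#\dgm_0(X_\xi(A_k))$ and Proposition~\ref{PropErgodicityMeanDensity} gives $\xi(A_k)/\lambda_n(A_k) \to m > 0$ almost surely, we obtain $\liminf_k g_k \geq m > 0$. In particular any limit $c$ is positive, and the multiplicative formulation above is equivalent to ordinary convergence of $g_k$.

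The core is then the strong law for the higher counts $m_\ell(A_k) := \#\dgm_\ell(X_\xi(A_k))$, $1\leq \ell\leq n-1$, together with the degree-zero term. Here I would invoke the alpha-complex reduction from the Remark: almost surely the atoms lie in general position, so by \cite{bauer2017morse} the persistent homology of $\mathcal{C}(X_\xi(A_k))$ agrees with that of the alpha filtration, whose simplices are simplices of the Delaunay complex $\del(X_\xi(A_k))$. This yields the domination
\[
n(X_k) \;\leq\; \#\{\sigma \in \del(X_\xi(A_k)) : \dim\sigma \leq n\},
\]
and, because $\xi$ has all finite moments, the expected number of such Delaunay simplices meeting a bounded window is finite, furnishing the integrability and the uniform boundary control needed for an ergodic-theorem argument. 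Applying the multiparameter spatial ergodic theorem to the translation-covariant functionals $m_\ell$, in the manner of the strong law for persistent Betti numbers of \cite{hiraoka2018} but now over the convex averaging sequence $\{A_k\}$, gives $m_\ell(A_k)/\lambda_n(A_k) \to c_\ell$ almost surely, with $c_\ell$ constant by ergodicity. Summing over $\ell$ produces $g_k \to c := m + \sum_{\ell=1}^{n-1} c_\ell$, finite by the moment bound and $\geq m>0$ by the previous step.

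The main obstacle is exactly this last convergence, and it is genuine: persistent homology is not additive over a partition of $A_k$, since a persistence pair may a priori be created by points spread across the window, so $m_\ell$ is not immediately a sum of independent local contributions. The alpha-complex reduction is precisely what restores locality — each Delaunay simplex, and hence each birth and death, is determined by a bounded neighbourhood of atoms — and it is this stabilization, combined with scaling property~(iv) of the convex averaging sequence (which forces the surface term $W_{n-1}(A_k)$ to be of lower order than $\lambda_n(A_k)$ and thus negligible after normalization), that upgrades the cube-based strong law of \cite{hiraoka2018} to arbitrary convex averaging sequences and delivers the almost surely constant limit.
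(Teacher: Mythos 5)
Your reduction of ergodicity in persistence to the almost sure convergence of $g_k = n(X_k)/\lambda_n(A_k)$ to a positive constant is correct, and your lower bound via $\#\dgm_0(X_\xi(A_k)) = \xi(A_k)-1$ together with Proposition~\ref{PropErgodicityMeanDensity} is sound; the general-position/Delaunay reduction is also the paper's first move. The gap is in the step you yourself flag as the main obstacle: the claimed strong law $m_\ell(A_k)/\lambda_n(A_k)\to c_\ell$ for the total pair counts. You justify it by ``applying the multiparameter spatial ergodic theorem to the translation-covariant functionals $m_\ell$, in the manner of \cite{hiraoka2018}'', but the machinery there (Theorem 1.11 and its near-additivity estimates) concerns the persistent Betti numbers $\beta_\ell^{r,s}$ at \emph{fixed finite} $(r,s)$, where the defect of additivity over a partition is controlled by the number of simplices with a vertex near the boundary at filtration value $s$. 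The total pair count is not a persistent Betti number; it effectively corresponds to $s=\infty$, and no analogous boundary control is supplied. Moreover the locality you invoke --- that each Delaunay simplex, and hence each birth and death, is determined by a bounded neighbourhood of atoms --- is not true with a deterministic radius (Delaunay cells can be arbitrarily large; they only stabilize at a random radius), and even granting stabilization of the complex, whether a given simplex creates or destroys a class, and hence whether it contributes a pair in degree $\ell$, depends on the homology of the entire complex at its insertion time, which is a global condition. So the central convergence is asserted rather than proven, and it is essentially the whole content of the lemma.

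For comparison, the paper does not prove a strong law for the $m_\ell$ at all. It sandwiches $n(X_k)$ between $\# X_k$ from below and the total number of Delaunay simplices $\#\del_\infty(X_k)$ from above, verifies the $\lambda$-sparse $\epsilon$-sample hypotheses of \cite{Amenta2012} to conclude that $\#\del_\infty(X_k)=\Theta(\# X_k)$ almost surely, deduces from this that $n(X_k)/\# X_k$ converges to a constant, and then multiplies by $\# X_k/\lambda_n(A_k)\to m$ from Proposition~\ref{PropErgodicityMeanDensity}. If you want to keep your route, you would need either to carry out a genuine stabilization argument for the pair counts (in the style of Penrose--Yukich), or to fall back on a Delaunay size bound of the Amenta type as the paper does; as written, the key step is missing.
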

\vs

The proof of \Cref{LemmaErgodicPersistence} makes use of two auxiliary Lemmas, which we show first.
\vs

\begin{lem}\label{LemnDelaunay}
Let $\xi$ be a simple and stationary point process on $\rr^n$ having all finite moments, $A\in\mathcal{B}^n_b$.
Then a.s.
\begin{equation*}
n(X_\xi(A)) = n_{\del}(X_\xi(A)),
\end{equation*}
with $n_{\del}(X_k)$ the number of persistent homology classes computed for the family of Delaunay complexes.%
\footnote{The Delaunay complex of a point cloud $X\subset \rr^n$ is defined as $\del_r(X)=\{Q\subseteq X\, |\, \bigcap_{x\in Q}\vor_r(x,X)\neq \emptyset\}$ with $\vor_r(x,X)=B_r(x)\cap \{y\in\rr^n\,|\, |y-x|\leq |y-p|\,\forall\,p\in X\}$ the Voronoi ball around $x\in X$~\cite{bauer2017morse}.}
\end{lem}

\begin{proof}
Following~\cite{bauer2017morse}, $X_\xi(A)$ is in general position if for every $P\subseteq X_\xi(A)$ of at most $n+1$ points (i) $P$ is affinely independent, and (ii) no point of $X_\xi(A)\setminus P$ lies on the smallest circumsphere of $P$.
Given that $\xi$ is stationary, its first moment measure is proportional to the Lebesgue measure~\cite{daley2007}, i.e., for $A\in \mathcal{B}^n_b$:
\begin{equation}\label{EqFirstMomentMeasureLebesgueIntensity}
\mathbb{E}[\xi(A)] = \mathbb{E}[\xi([0,1]^n)] \lambda_n(A).
\end{equation}
For $i<n$ any $i$-dimensional hyperplane and any $i$-dimensional hypersphere is a subset of $\rr^n$ of Lebesgue measure zero. 
It follows a.s. that the affine hull of any subset of $n+1$ points of a sample $X_\xi(A)$ is the entire%
\footnote{Else, with non-vanishing probability a Lebesgue-zero set would contain one or more points, in contradiction to \Cref{EqFirstMomentMeasureLebesgueIntensity}.} 
$\rr^n$, showing (i).
Let $P\subset X_\xi(A)$ be a subset consisting of $j \leq n+1$ points. 
$P$ has a $(j-2)$-dimensional circumsphere with minimal radius, on which a.s. no point of $X_\xi(A)\setminus P$ lies, given that circumspheres are Lebesgue-zero sets, showing (ii).
$X_\xi(A)$ thus being a.s. in general position, its \v{C}ech and Delaunay complex filtrations a.s. have isomorphic persistent homology~\cite{bauer2017morse}.
This proves the claim.
\end{proof}

A second auxiliary lemma shows that the size of the Delaunay triangulation $\del_\infty(X_k)$ scales proportional to $\#X_k$ in the limit of large $A_k$, based on results of~\cite{Amenta2012}.
\vs
\begin{prop}[Theorem 1 of~\cite{Amenta2012}]\label{PropDelaunayTriangulationScaling}
Let $X$ be a $\lambda$-sparse $\epsilon$-sample of a $p$-dimensional polyhedron $P$ in $\rr^n$, i.e., (i) every point $x\in P$ has a distance $\epsilon$ or less to a point in $X$, and (ii) every closed $n$-ball with radius $5n\epsilon$ contains at most $\lambda$ points of $X$.
In the worst case, the Delaunay triangulation of $X$ has size $\Theta((\#X)^{\frac{d-k+1}{p}})$, where $k=\lceil \frac{d+1}{p+1}\rceil$.
\end{prop}
\vs

\begin{lem}\label{LemDelaunayScaling}
Let $\xi$ be a simple and ergodic point process on $\rr^n$ having all finite moments.
Then a.s. $\#\del_\infty(X_k) = \Theta(\#X_k)$.
\end{lem}

\begin{proof}
We a.s. show the assumptions of \Cref{PropDelaunayTriangulationScaling}, i.e., that there exist $\lambda,\epsilon\in\rr$ such that the $X_k$ are $\lambda$-sparse $\epsilon$-samples of $\conv(X_k)$, the convex hull of $X_k$, which is a polyhedron in the sense of \cite{Amenta2012}.
Then $\#\del_\infty(X_k) = \Theta(\#X_k)$.

To show (i) assume there exists with non-zero probability $x\in \conv(X_k)$ such that for all $\epsilon >0$, $s\in X_k$: $|x-s|>\epsilon$. 
Then, with non-zero probability a ball of arbitrary radius $r>0$ around $x$ exists, such that $\xi(B_r(x))=0$. 
With non-zero probability this then leads for a sample $\xi$ and $r\to\infty$ to 
\begin{equation*}
\frac{\xi(B_r(x))}{\lambda_n(B_r(x))}\to 0.
\end{equation*}
Since $\xi$ is ergodic, via \Cref{PropErgodicityMeanDensity} we can conclude with non-zero probability for such a sample $\xi$ that $\mathbb{E}[\xi([0,1]^n)] = 0$, in contradiction to the assumption of $\xi$ having non-zero first moment. 
Thus, an $\epsilon$ exists such that a.s. for all $x\in \conv(X_k)$ there exists $s\in X_k$: $|x-s|<\epsilon$; a.s. (i) holds.

To show (ii) assume that with non-zero probability there exists an $n$-ball $B$ with radius $5n\epsilon$, such that for all $\lambda>0$: $\xi(B)>\lambda$. 
Then, $\xi(B)=\infty$ with non-zero probability, in contradiction to $\xi$ having all finite moments. 
Thus, every closed $n$-ball with radius $5n\epsilon$ a.s. contains at most $\lambda$ points of $X_k$, such that a.s. (ii) holds.
Indeed, the assumptions of Theorem 1 of \cite{Amenta2012} are fulfilled and $\#\del_\infty(X_k) = \Theta(\#X_k)$.
\end{proof}

We can now prove \Cref{LemmaErgodicPersistence}.

\begin{proof}[Proof of \Cref{LemmaErgodicPersistence}]
Due to \Cref{PropErgodicityMeanDensity} for any $\varepsilon>0$ there exists $N\in\nn$ such that a.s. for $k > N$:
\begin{equation}\label{EqScalingErgodicityOntheWay}
\left|  \frac{\#X_k}{\lambda_n(A_k)} - m\right| < \varepsilon,
\end{equation}
where $m = \mathbb{E}[\xi([0,1]^n)]$ is the mean density.
Next, we show that $n(X_k)/\# X_k$ converges as $k\to\infty$.
By \Cref{LemnDelaunay} we have $n(X_k)=n_{\del}(X_k)$.
Denoting the $\ell$-th persistence diagram of the filtration of Delaunay complexes $(\del_r(X_k))_r$ by $\dgm_{\ell,\del}(X_k)$, we find
\begin{equation*}
n_{\del}(X_k)=\#\bigcup_{\ell=0}^{n-1} \dgm_{\ell,\del} (X_k) \leq \sum_{\ell=0}^{n} \dim_{\mathbb{F}}\big(C_\ell(\del_\infty (X_k))\big) = \# \del_\infty(X_k),
\end{equation*}
with $C_\ell(\del_\infty(X_k))$ the $\ell$-th chain group of $\del_\infty(X_k)$ with $\mathbb{F}$-coefficients.
Now, $\#\del_\infty(X_k) = \Theta(\#X_k)$ by \Cref{LemDelaunayScaling}, such that $n_{\del}(X_k) \leq \Theta(\#X_k)$.
On the other hand, $n_{\del}(X_k)\geq \# X_k$ for all $k$, since the points of $X_k$ represent the dimension-0 persistent homology classes.
Thus, for any $\varepsilon >0$ there a.s. exist constants $C>0$ and $N'\in\nn$, such that for all $k\geq N'$:
\begin{equation}\label{EqConvergenceTo11}
\left| \frac{n(X_k)}{\# X_k} - C\right| < \varepsilon.
\end{equation}
By \Cref{EqScalingErgodicityOntheWay,EqConvergenceTo11} there exists $M$, such that a.s. for all $k\geq M$:
\begin{align*}
\left|\frac{n(X_k)}{\lambda_n(A_k)} - mC\right|\leq &\; \left| \frac{n(X_k)}{\lambda_n(A_k)} - \frac{m\, n(X_k)}{\#X_k}\right| + \left|\frac{m \, n(X_k)}{\#X_k} - m C\right|\nonumber\\
=&\; \frac{n(X_k)}{\#X_k} \left| \frac{\#X_k}{\lambda_n(A_k)} - m\right| + m \left| \frac{n(X_k)}{\#X_k} - C\right|\nonumber\\
< &\; \left(\frac{n(X_k)}{\#X_k} + m\right)  \varepsilon.
\end{align*}
The prefactor $n(X_k)/\#X_k$ a.s. converges for $k\to \infty$ by \Cref{EqConvergenceTo11}.
This yields ergodicity in persistence.
\end{proof}

\subsection{Geometric quantities}\label{SecGeometricQuantities}
Different geometric quantities may be computed from persistence diagram measures, which later show up in the proof of the packing relation (\Cref{ThmPackingRelation}).
These are of independent interest e.g. for applications in many-body physics, see \cite{spitz2020finding}.
\vs

\begin{defn}\label{DefGeometricQuantitiesPersistence}
Let $\rho$ be a persistence diagram measure and $A\in\mathcal{B}^n_b$. 
Let $\omega\in\Omega$. 
We define the \emph{number of persistent homology classes} as
\begin{equation*}
n_\omega(A):=\int_{\Delta}  \rho_\omega(A)(\dd x) = \rho_\omega(A)(\Delta).
\end{equation*}
Let $q>0$. 
The \emph{degree-$q$ persistence} is defined as
\begin{equation*}
l_{q,\omega}(A) := \left[\frac{1}{n_\omega(A)} \int_{\Delta} \pers(x)^q \, \rho_\omega(A)(\dd x)\right]^{1/q}.
\end{equation*}
With $Y_\omega(A)\subset \Delta$ the multiset of atoms of $\rho_\omega(A)$, we define the \emph{maximum death} as
\begin{equation}\label{EqMaximumDeathDef}
d_{\max,\omega}(A):= \max \{ d\, |\, (b,d)\in Y_\omega(A)\} = \lim_{p\to \infty} \left[  \int_{\Delta} d(x)^p\, \rho_\omega(A)(\dd x)\right]^{1/p},
\end{equation}
where the last equality is a general result for $p$-norms in finite dimensions.
\end{defn}
\vs

Bounded total persistence as introduced in~\cite{Cohen-Steiner2010} and described in \Cref{AppendixBoundedTotalPersistence} yields a link between the geometric quantities, which results in an upper bound for the number of persistent homology classes in a given volume.
\vs

\begin{lem}[The packing lemma]\label{LemmaPackingLemma}
Let $\xi$ be a simple point process on $\rr^n$ and let $n_\omega(\cdot), d_{\max,\omega}(\cdot)$ and $l_{q,\omega}(\cdot)$ be computed from the sample $\xi_\omega$, $\omega\in\Omega$. 
Then there exists a constant $c>0$, such that for any $\delta >0$ and $A\in \mathcal{B}^n_b$:
\begin{equation*}
n_\omega(A) \leq \frac{c\, (n+2\delta)}{\delta} \frac{d_{\max,\omega}(A)^{\delta}}{l_{n+\delta,\omega}(A)^{n+\delta}}.
\end{equation*}
\end{lem}

\begin{proof}
Let $A\in\mathcal{B}^n_b$ and $X:= X_{\xi_\omega}(A)$. 
For all $x\in \conv(X)$ we set 
\begin{equation*}
d_X(x):=\min_{p\in X} d(x,p).
\end{equation*}
The function $d_X$ is Lipschitz with Lipschitz constant 1.
By the nerve theorem~\cite{bjorner1995topological}, for any $r>0$ the sublevel set $d_X^{-1}[0,r]$ and the \v{C}ech complex $\check{C}_r(X)$ have isomorphic homology groups with homology groups of $d_X^{-1}[0,r]$ computed via singular homology. 
In particular, the persistence modules of both corresponding filtrations are isomorphic. 
$\conv(X)$ being bounded implies bounded degree-$(n+\delta)$ total persistence of the filtration $(d_X^{-1}[0,r])_r$ for all $\delta>0$, see \Cref{AppendixBoundedTotalPersistence}. 
Then, as in \Cref{PropPolynomialGrowth} there exists a constant $c>0$, such that
\begin{equation*}
n_\omega(A)l_{n+\delta,\omega}(A)^{n+\delta}=\Pers_{n+\delta}(d_X) \leq c \, \amp(d_X)^\delta \frac{n+2\delta}{\delta},
\end{equation*}
with $\amp(d_X)=\max_{x\in \conv(X)}d_X(x)$. 
Note that $\amp(d_X)=d_{\max,\omega}(A)$, so we obtain the desired result for the filtration of \v{C}ech complexes.
\end{proof}

\subsection{Proof of the packing relation}\label{SecProofPackingRelFinal}
Taking expectations, the previous packing lemma yields together with existing limit measures and ergodicity considerations the packing relation, which we can prove in this subsection.
First, we define expected variants of the geometric quantities of the previous subsection.
\vs

\begin{defn}\label{DefGeometricQuantitiesPersistenceExpectation}
Let $\mathfrak{p}$ be a non-zero persistence diagram expectation measure. 
We define the \emph{expected number of persistent homology classes} as
\begin{equation*}
\mathfrak{n}(A):=\int_{\Delta}  \mathfrak{p}(A)(\dd x) = \mathfrak{p}(A)(\Delta).
\end{equation*}
Let $q>0$. The \emph{expected degree-$q$ persistence} is defined as
\begin{equation*}
\mathfrak{l}_q(A):= \left[\frac{1}{\mathfrak{n}(A)} \int_{\Delta} \pers(x)^q \, \mathfrak{p}(A)(\dd x)\right]^{1/q}.
\end{equation*}
With $Y_\omega(A)\subset \Delta$ the multiset of atoms of $\rho_\omega(A)$, we define the \emph{expected maximum death} as
\begin{equation*}
\mathfrak{d}_{\max}(A):=  \lim_{p\to \infty} \left[  \int_{\Delta} d(x)^p\, \mathfrak{p}(A)(\dd x)\right]^{1/p}.
\end{equation*}
\end{defn}
\vs

The following two propositions justify the nomenclature of \Cref{DefGeometricQuantitiesPersistenceExpectation} in light of \Cref{DefGeometricQuantitiesPersistence}.
\vs

\begin{prop}\label{PropExpectationNumberMaxDeath}
Let $\mathfrak{p}$ be an existing non-zero persistence diagram expectation measure and $A\in \mathcal{B}^n_b$. 
Then
\begin{equation*}
\mathfrak{n}(A) = \mathbb{E}[n_\omega(A)].
\end{equation*}
If the measure $\mathfrak{p}(A)$ is boundedly finite, then $\mathfrak{n}(A) < \infty$.
\end{prop}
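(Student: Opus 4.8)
The plan is to handle the two assertions separately. The identity $\mathfrak{n}(A)=\mathbb{E}[n_\omega(A)]$ is essentially the defining property of $\mathfrak{p}(A)$ as the first moment measure of $\rho_\cdot(A)$, made rigorous at the unbounded set $\Delta$ by a monotone convergence argument; the finiteness statement rests on the observation that, for fixed $A$, every persistence diagram is supported in one and the same bounded region of $\Delta$, uniformly in $\omega$.

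For the identity, recall that by definition $\mathfrak{p}(A)=\mathbb{E}[\rho_\omega(A)]$, so $\mathfrak{p}(A)(B)=\mathbb{E}[\rho_\omega(A)(B)]$ for every $B\in\mathcal{B}(\Delta)$. First I would pick an increasing sequence of bounded Borel sets $B_k\uparrow\Delta$, for instance $B_k=\{(b,d)\in\Delta\,|\,-k\le b<d\le k\}$, whose union is all of $\Delta$. Since both $\rho_\omega(A)$ and $\mathfrak{p}(A)$ are measures, continuity from below gives $\mathfrak{n}(A)=\mathfrak{p}(A)(\Delta)=\lim_k \mathfrak{p}(A)(B_k)=\lim_k \mathbb{E}[\rho_\omega(A)(B_k)]$. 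Because $\rho_\omega(A)(B_k)$ is nonnegative and increases to $\rho_\omega(A)(\Delta)=n_\omega(A)$, the monotone convergence theorem lets me interchange the limit and the expectation, so $\mathfrak{n}(A)=\mathbb{E}[n_\omega(A)]$. This is an equality in $[0,\infty]$, so no finiteness hypothesis is needed for this half.

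For the second assertion the key step is to produce a single bounded set $\Delta_A\subset\Delta$ containing the support of every $\rho_\omega(A)$. Since $\xi$ is boundedly finite, each cloud $X_{\xi_\omega}(A)$ is finite and, being a subset of $A$, has diameter at most $\diam(A)$. Once $r$ exceeds the circumradius of $X_{\xi_\omega}(A)$, the closed balls $B_r(x)$ all contain the circumcenter, so $\check{C}_r(X_{\xi_\omega}(A))$ contains the full simplex on $X_{\xi_\omega}(A)$ and is contractible; hence every class of positive degree, and every $0$-dimensional class except the single essential one, has died by radius $\diam(A)$ (the circumradius being bounded by $\diam(A)$), while all births are nonnegative. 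The essential $0$-class has death $+\infty\notin\rr$ and is therefore not recorded in $\Delta$. Thus every pair $(b,d)\in D_A(X_{\xi_\omega}(A))$ lies in $\Delta_A:=\{(b,d)\in\Delta\,|\,0\le b<d\le \diam(A)\}$, uniformly in $\omega$, and taking expectations shows $\mathfrak{p}(A)$ is concentrated on $\Delta_A$ as well. Combining this with the first part, $\mathfrak{n}(A)=\mathfrak{p}(A)(\Delta)=\mathfrak{p}(A)(\Delta_A)$, and since $\Delta_A$ is a bounded subset of $\Delta$, bounded finiteness of $\mathfrak{p}(A)$ yields $\mathfrak{p}(A)(\Delta_A)<\infty$, i.e. $\mathfrak{n}(A)<\infty$.

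I expect the main obstacle to be the support argument rather than the measure-theoretic identity: one must pin down the uniform upper bound on death radii through the circumradius and correctly discard the essential class whose death is infinite. A further point of care is that $\Delta_A$ abuts the diagonal $\{b=d\}$, so it is bounded but not relatively compact in $\Delta$; the finiteness therefore genuinely relies on bounded finiteness in the metric sense and would not follow from mere finiteness on sets relatively compact in $\Delta$.
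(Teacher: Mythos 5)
Your proof is correct and follows essentially the same route as the paper, which deduces the identity from the first-moment-measure property (its Proposition on expectations of functional summaries, applied with $\mathcal{F}\equiv 1$) and, in proving that proposition, invokes exactly your observation that all $\rho_\omega(A)$ for fixed $A$ are supported in a single bounded subset of $\Delta$. Your version merely spells out the monotone-convergence step and the circumradius bound that the paper leaves implicit.
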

\vs 

We get similar statements for $\mathfrak{d}_{\max}(A)$ and $\mathfrak{l}_q(A)$ for sufficiently large point clouds.
\vs 

\begin{prop}\label{PropExpectationDegreeQPersistenceLength}
Let $\xi$ be a stationary and ergodic simple point process on $\rr^n$ having all finite moments. 
Let $\mathfrak{p}$ be the persistence diagram expectation measure computed from $\xi$, assumed to exist and to be non-zero. 
Let $\{A_k\}$ be a balanced convex averaging sequence and $\epsilon>0$. 
Then for $k$ sufficiently large:
\begin{equation}\label{EqdmaxAkErgodic}
 |\mathfrak{d}_{\max}(A_k) - \mathbb{E}[d_{\max,\omega}(A_k)]| < \epsilon.
\end{equation}
We find for all $q>0$ and $k$ sufficiently large that
\begin{equation}\label{EqlqAkErgodic}
|\mathfrak{l}_q(A_{k}) - \mathbb{E}[l_{q,\omega}(A_{k})]| < \epsilon.
\end{equation}
For any $p,q\neq 0$ and $\omega\in\Omega$ we a.s. have for sufficiently large $k$:
\begin{equation}\label{EqErgodicConvergenceFracdmaxlq}
    \left| \mathbb{E}\left[\frac{d_{\max,\omega}(A_k)^p}{l_{q,\omega}(A_k)^q} \right] - \lim_{k'\to\infty}\frac{d_{\max,\omega}(A_{k'})^p}{l_{q,\omega}(A_{k'})^q}\right| < \epsilon.
\end{equation}
In particular, this a.s. implies with \Cref{EqdmaxAkErgodic,EqlqAkErgodic} for sufficiently large $k$:
\begin{equation*}
    \left|\mathfrak{d}_{\max}(A_k) -\lim_{k'\to\infty}d_{\max, \omega}(A_{k'})\right| < \epsilon,
\end{equation*}
and for all $q>0$:
\begin{equation*}
    \left|\mathfrak{l}_q(A_k) - \lim_{k'\to\infty} l_{q, \omega}(A_{k'})\right| < \epsilon.
\end{equation*}
Let $A\in \mathcal{B}_b^n$. 
If the measure $\mathfrak{p}(A)$ is boundedly finite and $\supp(\mathfrak{p}(A))\subset \Delta$ is bounded, then  $\mathfrak{d}_{\max}(A)<\infty$ and $\mathfrak{l}_q(A)<\infty$ for all $q>0$.
\end{prop}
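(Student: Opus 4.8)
The plan is to reduce every quantity appearing in the statement to a ratio of \emph{extensive} functionals of the persistence diagram measure, to normalize each such functional by $\lambda_n(A_k)$, and then to show that the normalized functionals converge both almost surely and in $L^1$ to deterministic constants. Concretely, for $q,p>0$ I would introduce
\[
P_{q,\omega}(A) := \int_\Delta \pers(x)^q\,\rho_\omega(A)(\dd x), \qquad D_{p,\omega}(A) := \int_\Delta d(x)^p\,\rho_\omega(A)(\dd x),
\]
together with $n_\omega(A)=\rho_\omega(A)(\Delta)$. By the defining property of the first moment measure one has $\mathbb{E}[P_{q,\omega}(A)]=\int_\Delta\pers(x)^q\,\mathfrak{p}(A)(\dd x)$ and $\mathbb{E}[n_\omega(A)]=\mathfrak{n}(A)$ (Prop.~\ref{PropExpectationNumberMaxDeath}), so that $\mathfrak{l}_q(A)^q$ and $l_{q,\omega}(A)^q$ are exactly the ratios $\mathbb{E}[P_{q,\omega}(A)]/\mathbb{E}[n_\omega(A)]$ and $P_{q,\omega}(A)/n_\omega(A)$, with the analogous $p\to\infty$ picture governing $\mathfrak{d}_{\max}$ and $d_{\max,\omega}$.

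First I would establish, extending Thm.~\ref{ThmStrongLawPersBetti} from persistent Betti numbers to these moment functionals, that
\[
\frac{P_{q,\omega}(A_k)}{\lambda_n(A_k)}\to \hat P_q, \qquad \frac{n_\omega(A_k)}{\lambda_n(A_k)}\to \hat n,
\]
almost surely and in $L^1$, with $\hat P_q,\hat n\in(0,\infty)$ deterministic (positivity of $\hat n$ follows from the $\#X_k-1$ finite $H_0$-classes and ergodicity in persistence, Lem.~\ref{LemmaErgodicPersistence}). The Betti-number law covers $n_\omega$ directly; for $P_{q,\omega}$ I would express the total degree-$q$ persistence through persistent Betti numbers by a layer-cake argument (writing $\pers(x)^q$ as an integral over a persistence threshold and counting surviving features), so that Thm.~\ref{ThmStrongLawPersBetti} together with bounded total persistence \cite{Cohen-Steiner2010} secures both finiteness of $\hat P_q$ and the uniform integrability needed for the $L^1$ half, using that $\xi$ has all finite moments. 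The $L^1$ convergence simultaneously yields $\int_\Delta\pers^q\,\mathfrak{p}(A_k)(\dd x)/\lambda_n(A_k)\to\hat P_q$ and $\mathfrak{n}(A_k)/\lambda_n(A_k)\to\hat n$ for the expectation-measure side.

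Taking ratios then produces the common deterministic limit $L_q:=(\hat P_q/\hat n)^{1/q}$ for both $\mathfrak{l}_q(A_k)$ and $l_{q,\omega}(A_k)$. Since the almost sure limit of $l_{q,\omega}(A_k)$ and the limit of the deterministic sequence $\mathfrak{l}_q(A_k)$ coincide, uniform integrability of $\{l_{q,\omega}(A_k)\}_k$ upgrades this to $\mathbb{E}[l_{q,\omega}(A_k)]\to L_q$, whence Eq.~(\ref{EqlqAkErgodic}) holds for $k$ large. The main obstacle is the treatment of $\mathfrak{d}_{\max}$ and $d_{\max,\omega}$, since these are extremal rather than additive functionals and the strong law does not apply verbatim. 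Here I would argue that, in the bounded-support regime, $d_{\max,\omega}(A_k)$ converges almost surely to the deterministic essential supremum $D^{*}$ of the death values occurring with positive spatial density; because $D^{*}$ is invariant under translations it is constant by ergodicity. Convergence of $\mathbb{E}[d_{\max,\omega}(A_k)]$ and of $\mathfrak{d}_{\max}(A_k)$ to the same $D^{*}$ then gives Eq.~(\ref{EqdmaxAkErgodic}), and Eq.~(\ref{EqErgodicConvergenceFracdmaxlq}) follows from the continuous mapping theorem applied to the jointly convergent pair $(d_{\max,\omega}(A_k),l_{q,\omega}(A_k))\to(D^{*},L_q)$, again with uniform integrability of the ratio to pass to the expectation.

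The two ``in particular'' estimates are immediate: combining Eq.~(\ref{EqdmaxAkErgodic}) with $\mathfrak{d}_{\max}(A_k)\to D^{*}=\lim_{k'}d_{\max,\omega}(A_{k'})$ via the triangle inequality yields the bound for $\mathfrak{d}_{\max}$, and likewise Eq.~(\ref{EqlqAkErgodic}) with $\mathfrak{l}_q(A_k)\to L_q=\lim_{k'}l_{q,\omega}(A_{k'})$ yields the bound for $\mathfrak{l}_q$. Finally, the closing finiteness claim for a fixed $A$ is read off from the $p$-norm representation: boundedness of $\supp(\mathfrak{p}(A))$ bounds $d(x)$ uniformly on the support, so $\mathfrak{d}_{\max}(A)<\infty$, while bounded finiteness of $\mathfrak{p}(A)$ together with $\mathfrak{n}(A)<\infty$ (Prop.~\ref{PropExpectationNumberMaxDeath}) bounds $\int_\Delta\pers^q\,\mathfrak{p}(A)(\dd x)$ and hence $\mathfrak{l}_q(A)<\infty$ for every $q>0$.
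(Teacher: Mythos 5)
Your proposal reaches the same conclusions but by a genuinely different route. The paper does not extend Thm.~\ref{ThmStrongLawPersBetti} to the moment functionals; instead it runs everything through the functional-summary machinery of the appendix: $L_{q,\omega}(A)=\int_\Delta\pers(x)^q\,\rho_\omega(A)(\dd x)$ is an \emph{additive} functional summary, Lemma~\ref{LemmaAdditiveDivideVolumeIntensiveFuncSummary} and Corollary~\ref{CorAdditiveDivideNumberIntensiveFuncSummary} (which rest on ergodicity in persistence, Lemma~\ref{LemmaErgodicPersistence}, and on Thm.~\ref{ThmExistenceLimitingRadonMeasure}) make $L_{q,\omega}/n_\omega$, $\lambda_n(A_{k})/n_{\omega}(A_{k})$ and $L_{q,\omega}/\lambda_n$ into $\xi$-\emph{intensive} summaries, and Props.~\ref{PropPointwiseConvergence} and \ref{PropIntensiveFuncSummariesVolEnsembleAvg} then allow swapping the infinite-volume limit for the ensemble average factor by factor — precisely the ratio decomposition you write down. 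Your layer-cake reduction of $P_{q,\omega}$ to persistent Betti numbers plus Vitali/uniform integrability is a legitimate and arguably more standard substitute, and it buys independence from the appendix; what it costs is a separate justification for interchanging the $(r,s)$-integration with $k\to\infty$ against the unbounded weight $\pers^q$, which the paper sidesteps (not entirely rigorously either) via its uniform-boundedness hypotheses on functional summaries. The one place where your route diverges in substance is $d_{\max}$: the paper treats it as the monotone $q\to\infty$ limit of the intensive summaries $(d_{q,\omega}/n_\omega)^{1/q}$ with $d_{q,\omega}(A)=\int_\Delta d(x)^q\,\rho_\omega(A)(\dd x)$, whereas you posit a.s. convergence to a deterministic essential supremum $D^{*}$ ``in the bounded-support regime'' — but boundedness of $\supp(\mathfrak{p}(A))$ is hypothesized only in the final finiteness claim, not for Eqs.~(\ref{EqdmaxAkErgodic}) and (\ref{EqErgodicConvergenceFracdmaxlq}), so you should either establish finiteness of $D^{*}$ under the stated hypotheses or adopt the paper's $q\to\infty$/monotone-convergence device. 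Your treatment of the two ``in particular'' estimates and of the closing finiteness claim matches the paper's.
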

\vs

The proofs of Propositions \ref{PropExpectationNumberMaxDeath} and \ref{PropExpectationDegreeQPersistenceLength} are postponed to Appendix \ref{AppendixProofsPropsGeometricQuantities}.
\vs

Self-similar scaling of persistence diagram expectation measures manifests itself in the characteristic scaling behavior of corresponding geometric quantities.
\vs

\begin{lem}\label{LemmaScalingBehavior}
Let $(\mathfrak{p}(t))_{t\in (T_0,T_1)}$ be a family of existing non-zero persistence diagram expectation measures, which scales self-similarly between $T_0$ and $T_1$ with exponents $\eta_1,\eta_2$. 
The $t$-dependence of any geometric quantity constructed from $\mathfrak{p}(t)$ is denoted by an additional $t$-argument. 
Let $\{A_k\}$ be a balanced convex averaging sequence. 
Then for all $t,t'\in (T_0,T_1)$, $q\geq 1$ and $k$ sufficiently large,
\begin{subequations}
\begin{align*}
\mathfrak{n}(t,A_k) =&\,  (t/t')^{ - \eta_2} \mathfrak{n}(t',A_k),\\
\mathfrak{l}_q(t,A_k) =&\, (t/t')^{\eta_1} \mathfrak{l}_q(t',A_k),\\
\mathfrak{d}_{\max}(t,A_k) =&\, (t/t')^{\eta_1} \mathfrak{d}_{\max}(t',A_k).
\end{align*}
\end{subequations}
\end{lem}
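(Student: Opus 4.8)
The plan is to reduce all three identities to a single change-of-variables formula for integration against $\mathfrak{p}(t,A_k)$. Throughout I fix $t,t'\in(T_0,T_1)$ and abbreviate $\kappa:=(t/t')^{-\eta_1}$ and $\mu:=(t/t')^{-\eta_2}$, both strictly positive since $t/t'>0$ and $\eta_1,\eta_2\in\rr$. For $k$ large enough that Definition \ref{DefiScaling} applies, the scaling relation reads $\mathfrak{p}(t,A_k)(B)=\mu\,\mathfrak{p}(t',A_k)(\kappa B)$ for all $B\in\mathcal{B}(\Delta)$. Writing $s_\kappa(b,d):=(\kappa b,\kappa d)$ for the scaling map and noting $\mathbbm{1}_{\kappa B}(y)=\mathbbm{1}_B(s_\kappa^{-1}(y))=\mathbbm{1}_B(\kappa^{-1}y)$, this identity for indicators extends by linearity to nonnegative simple functions and by monotone convergence to an arbitrary Borel measurable $f:\Delta\to\rr_+$, giving
\[
\int_{\Delta} f(x)\,\mathfrak{p}(t,A_k)(\dd x)=\mu\int_{\Delta} f(\kappa^{-1}y)\,\mathfrak{p}(t',A_k)(\dd y).
\]

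First I would settle the scaling of $\mathfrak{n}$. Since $\kappa>0$, the map $s_\kappa$ preserves $\{b<d\}$, so $\kappa^{-1}\Delta=\Delta$; taking $f=\mathbbm{1}_\Delta$ in the formula yields $\mathfrak{n}(t,A_k)=\mu\,\mathfrak{n}(t',A_k)=(t/t')^{-\eta_2}\mathfrak{n}(t',A_k)$, the first assertion. For the remaining two I would use that both $\pers$ and the death coordinate $d$ are homogeneous of degree one, i.e. $\pers(\kappa^{-1}y)=\kappa^{-1}\pers(y)$ and $d(\kappa^{-1}y)=\kappa^{-1}d(y)$. Applying the formula with $f=\pers^q$ gives $\int_\Delta\pers(x)^q\,\mathfrak{p}(t,A_k)(\dd x)=\mu\,\kappa^{-q}\int_\Delta\pers(y)^q\,\mathfrak{p}(t',A_k)(\dd y)$; dividing by $\mathfrak{n}(t,A_k)=\mu\,\mathfrak{n}(t',A_k)$ cancels the factor $\mu$, and taking $q$-th roots produces $\mathfrak{l}_q(t,A_k)=\kappa^{-1}\mathfrak{l}_q(t',A_k)=(t/t')^{\eta_1}\mathfrak{l}_q(t',A_k)$.

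For $\mathfrak{d}_{\max}$, applying the formula with $f=d^p$ gives, for each finite $p$,
\[
\left[\int_\Delta d(x)^p\,\mathfrak{p}(t,A_k)(\dd x)\right]^{1/p}=\mu^{1/p}\,\kappa^{-1}\left[\int_\Delta d(y)^p\,\mathfrak{p}(t',A_k)(\dd y)\right]^{1/p}.
\]
The only subtlety is passing to the limit $p\to\infty$: since $\mu>0$ is a fixed constant, $\mu^{1/p}\to 1$, so the limit on the left exists in $[0,\infty]$ precisely when the one on the right does, and the two differ by the constant factor $\kappa^{-1}$. This yields $\mathfrak{d}_{\max}(t,A_k)=\kappa^{-1}\mathfrak{d}_{\max}(t',A_k)=(t/t')^{\eta_1}\mathfrak{d}_{\max}(t',A_k)$ in the extended reals, independently of any finiteness assumption. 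Since every step is carried out at fixed $k$ (large enough for the defining relation), no interchange with the $k\to\infty$ limit is required; I expect the main care to go into justifying the measure-theoretic change of variables via monotone convergence and the harmless $\mu^{1/p}\to1$ passage.
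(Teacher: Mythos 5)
Your proposal is correct and follows essentially the same route as the paper's proof: the scaling relation is recast as a change-of-variables (push-forward) identity for integrals against $\mathfrak{p}(t,A_k)$, combined with degree-one homogeneity of $\pers$ and $d$ and the observation that the prefactor $(t/t')^{-\eta_2/p}\to 1$ as $p\to\infty$. The only difference is cosmetic — you justify the change of variables via simple functions and monotone convergence and write out all three identities, whereas the paper invokes the push-forward transformation directly and computes only the $\mathfrak{d}_{\max}$ case explicitly.
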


\begin{proof}
The derivation of the first two equations follows analogously to the third via push-forward measures and changing integration variables. 
We let $t,t'\in (T_0,T_1)$, set $f_{t,t'}(x):=(t/t')^{\eta_1}x$ for all $x\in \Delta$ and note that $f_{t,t'}(\Delta)=\Delta$. 
Let $k$ be sufficiently large. 
We compute,
\begin{subequations}
\begin{align*}
\mathfrak{d}_{\max}(t,A_k) = &\, \lim_{p\to \infty} \left[  \int_{\Delta} d(x)^p\, \mathfrak{p}(t,A_k)(\dd x)\right]^{1/p}\\
=&\, \lim_{p\to\infty} \left[ (t/t')^{-\eta_2} \int_{\Delta} d(x)^p \, ((f_{t,t'})_*\mathfrak{p}(t',A_k))(\dd x)  \right]^{1/p}\\
=&\, \lim_{p\to\infty} \left[ (t/t')^{-\eta_2} \int_{\Delta} d(f_{t,t'}(x))^p\,  \mathfrak{p}(t',A_k)(\dd x)  \right]^{1/p}\\
= &\,  \lim_{p\to \infty} (t/t')^{\eta_1 - \eta_2/p} \left[  \int_{\Delta} d(x)^p\, \mathfrak{p}(t',A_k)(\dd x)\right]^{1/p}\\
= &\, (t/t')^{\eta_1} \mathfrak{d}_{\max}(t',A_k).
\end{align*}
\end{subequations}
\end{proof}

Finally, based on the devised probabilistic setting and \Cref{LemmaPackingLemma} we can prove the packing relation as stated in \Cref{ThmPackingRelation}.

\begin{proof}[Proof of the packing relation (\Cref{ThmPackingRelation})]
The time-dependence of any geometric quantity constructed from $\rho_\omega (t,\cdot)$ and $\mathfrak{p}(t,\cdot )$ is again denoted by an additional $t$-argument. 
We derive the packing relation from evaluation on a balanced convex averaging sequence $\{A_k\}$. 
From \Cref{LemmaPackingLemma} we obtain that for an arbitrary $\delta>0$,
\begin{equation}\label{EqUpperBound}
n_\omega(t,A_k) \leq \frac{c\, (n+2\delta)}{\delta} \frac{d_{\max,\omega}(t,A_k)^{\delta}}{l_{n+\delta, \omega}(t,A_k)^{n+\delta}}.
\end{equation}
Using \Cref{PropExpectationDegreeQPersistenceLength} we a.s. find for a sample $\omega\in\Omega$, $\epsilon>0$ and $k$ sufficiently large
\begin{equation*}
\epsilon > \left|\mathfrak{d}_{\max}(t,A_k) -\lim_{k'\to\infty}d_{\max, \omega}(t,A_{k'})\right|,
\end{equation*}
and a.s. for $q>0$:
\begin{equation*}
\epsilon> \left|\mathfrak{l}_q(t,A_k) - \lim_{k'\to\infty} l_{q, \omega}(t,A_{k'})\right|.
\end{equation*}
Exploiting \Cref{EqErgodicConvergenceFracdmaxlq} from \Cref{PropExpectationDegreeQPersistenceLength}, \Cref{EqUpperBound} a.s. yields for sufficiently large $k$ upon taking expectations
\begin{subequations}
\begin{align*}
\mathfrak{n}(t,A_k) \leq &\,\lim_{k'\to \infty} \frac{c\, (n+2\delta)}{\delta} \frac{d_{\max, \omega}(t,A_{k'})^{\delta}}{l_{n+\delta, \omega}(t,A_{k'})^{n+\delta}} + O(\epsilon^\delta)\\
=&\,\frac{c\, (n+2\delta)}{\delta} \frac{\mathfrak{d}_{\max}(t,A_k)^{\delta}}{\mathfrak{l}_{n+\delta}(t,A_k)^{n+\delta}} + O(\epsilon^\delta).
\end{align*}
\end{subequations}
Exploiting self-similarity and \Cref{LemmaScalingBehavior}, we find for any $t,t'\in(T_0,T_1)$,
\begin{equation*}
\frac{\mathfrak{d}_{\max}(t,A_k)^{\delta}}{\mathfrak{l}_{n+\delta}(t,A_k)^{n+\delta}} = (t/t')^{-n\eta_1}  \frac{\mathfrak{d}_{\max}(t',A_k)^{\delta}}{\mathfrak{l}_{n+\delta}(t',A_k)^{n+\delta}}.
\end{equation*}
Hence,
\begin{equation}\label{EqIneqPackingRelationProof}
\mathfrak{n}(t,A_k) = (t/t')^{-\eta_2} \mathfrak{n}(t',A_k) \leq (t/t')^{-n\eta_1}\frac{c\, (n+2\delta)}{\delta} \frac{\mathfrak{d}_{\max}(t',A_k)^{\delta}}{\mathfrak{l}_{n+\delta}(t',A_k)^{n+\delta}} + O(\epsilon^\delta).
\end{equation}
We assume that $\eta_2\neq n\eta_1$ and that $t,t'\in (T_0,T_1)$ exist with
\begin{equation}\label{EqSuffExtended}
t/t' > \frac{c(n+2\delta)}{\delta} \max\left\{ \frac{\mathfrak{d}_{\max}(t,A_k)^\delta}{\mathfrak{n}(t,A_k) \mathfrak{l}_{n+\delta}(t,A_k)^{n+\delta}},  \frac{\mathfrak{d}_{\max}(t',A_k)^\delta}{\mathfrak{n}(t',A_k) \mathfrak{l}_{n+\delta}(t',A_k)^{n+\delta}}\right\}^{1/|n\eta_1 - \eta_2|},
\end{equation}
for any $k$ sufficiently large. 
Then, either for $n\eta_1<\eta_2$,
\begin{equation*}
(t/t')^{\eta_2-n\eta_1} = (t'/t)^{n\eta_1-\eta_2} >  \frac{c(n+2\delta)}{\delta} \frac{\mathfrak{d}_{\max}(t,A_k)^\delta}{\mathfrak{n}(t,A_k) \mathfrak{l}_{n+\delta}(t,A_k)^{n+\delta}},
\end{equation*}
or for $n\eta_1 > \eta_2$,
\begin{equation*}
(t/t')^{n\eta_1-\eta_2} >  \frac{c(n+2\delta)}{\delta} \frac{\mathfrak{d}_{\max}(t',A_k)^\delta}{\mathfrak{n}(t',A_k) \mathfrak{l}_{n+\delta}(t',A_k)^{n+\delta}},
\end{equation*}
both of them being in contradiction to \Cref{EqIneqPackingRelationProof} for sufficiently small $\epsilon$. 
Thus, the desired equality $\eta_2=n\eta_1$ a.s. follows, provided that the interval $(T_0,T_1)$ is sufficiently extended in the sense of \Cref{EqSuffExtended}.
\end{proof}

\section{Examples}\label{SecExamples}
This section is devoted to three examples for self-similar scaling. 
In \Cref{SecExample1} we consider a Poisson point process with time-dependent intensity and show self-similar scaling of corresponding persistence diagram expectation measures. 
The second example, given in \Cref{SecExample2}, considers persistence diagram measures of point clouds sampled uniformly from sublevel sets of smooth functions which themselves scale self-similarly. 
\Cref{SecExampleQuantumPhysics} describes an application of the deduced results in quantum many-body physics~\cite{spitz2020finding}.

\subsection{Poisson point process with power-law scaling intensity}\label{SecExample1}
We consider time-dependent Poisson point processes in the following sense, which generalize usual Poisson point processes~\cite{last2017lectures}.
\vs

\begin{defn}\label{DefTimeDepPoisson}
A family of point processes $(\xi(t))_{t\in[1,\infty)}$ on $\rr^n$ is a \emph{time-dependent Poisson point process on $\rr^n$}, if there exists an intensity function $\gamma:[1,\infty)\to(0,\infty)$, such that for each $t\in [1,\infty)$,
\begin{enumerate}[(i)]
\item the expected number of points in $A\in \mathcal{B}^n$ is $\mathbb{E}[\xi(t,A)]=\gamma(t) \lambda_n(A)$,
\item for every $A\in \mathcal{B}^n$ the distribution of $\xi(t,A)$ is $\mathbb{P}[\xi(t,A)=k] = \mathrm{Po}(\gamma(t)\lambda_n(A);k)$ for all $k\in\nn$, $\mathrm{Po}(\lambda;k) = (\lambda^k/k!)\exp(-\lambda)$ denoting the Poisson distribution,
\item for every $m\in \nn$ and all pairwise disjoint Borel sets $A_1,\dots,A_m\in \mathcal{B}^n$ the random variables $\xi(t,A_1),\dots,\xi(t,A_m)$ are independent.
\end{enumerate}
\end{defn}
\vs

A time-dependent Poisson process with properties (i) to (iii) defines a Poisson point process at each time $t$, individually. 
It is a basic result from the theory of point processes that such a point process $\xi(t)$ is stationary and ergodic having all finite moments for each $t\in [1,\infty)$~\cite{daley2003introduction, daley2007, last2017lectures}.
\vs

\begin{prop}\label{PropTimeDepPoisson}
Let $(\xi(t))_{t\in[1,\infty)}$ be a time-dependent Poisson point process on $\rr^n$ with intensity function
\begin{equation*}
\gamma(t) = \gamma_0 t^{-n\eta_1},
\end{equation*}
$\gamma_0>0$ and $\eta_1\geq 0$. 
Let $\{A_k\}$ be a balanced convex averaging sequence. 
Then the family $(\mathfrak{p}(t,A_k) / \lambda_n(A_k))_t$ of persistence diagram expectation measures computed from $(\xi(t))_t$ normalized to the volume of the convex sets converges vaguely for $k\to\infty$ to a family of Radon measures $(\mathfrak{P}(t))$ which scales self-similarly between $1$ and $\infty$ with exponents $\eta_1$ and $n\eta_1$.
\end{prop}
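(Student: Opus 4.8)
The plan is to secure the existence of each limiting measure first and then to exploit the scaling symmetry intrinsic to homogeneous Poisson processes. For each fixed $t\in[1,\infty)$, property (i) of Definition \ref{DefTimeDepPoisson} makes $\xi(t)$ a homogeneous Poisson process of intensity $\gamma(t)$, hence simple, stationary, ergodic and with all finite moments. Lemma \ref{LemmaExistencePersDiagExpecMeasure} then guarantees that $\mathfrak{p}(t,A_k)$ exists for every $k$, and Theorem \ref{ThmExistenceLimitingRadonMeasure} yields a unique Radon measure $\mathfrak{P}(t)$ with $\mathfrak{p}(t,A_k)/\lambda_n(A_k)\overset{v}{\to}\mathfrak{P}(t)$. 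It thus remains to show that the family $(\mathfrak{P}(t))$ obeys the scaling relation of Definition \ref{DefiScaling} with exponents $\eta_1$ and $n\eta_1$.

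The core of the argument is a finite-volume identity relating $\mathfrak{p}(t,\cdot)$ to $\mathfrak{p}(t',\cdot)$. Write $S_\kappa$ for the map sending a configuration $\sum_i\delta_{x_i}$ to $\sum_i\delta_{\kappa x_i}$, and set $\kappa:=(t/t')^{\eta_1}$. Pushing a Poisson process of intensity $\gamma(t')$ forward under $S_\kappa$ produces a Poisson process of intensity $\kappa^{-n}\gamma(t')=(t/t')^{-n\eta_1}\gamma(t')=\gamma(t)$, and since a homogeneous Poisson process is determined in distribution by its intensity, $\xi(t)\stackrel{\mathrm{d}}{=}S_\kappa\,\xi(t')$. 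On the combinatorial side, the \v{C}ech filtration satisfies $\check{C}_r(\kappa X)=\check{C}_{r/\kappa}(X)$, so rescaling a point cloud by $\kappa$ scales its persistence diagram by $\kappa$, i.e. $\dgm_\ell(\kappa X)=\kappa\,\dgm_\ell(X)$. Combining these two facts with the pathwise relation $X_{S_\kappa\xi(t')}(A)=\kappa\,X_{\xi(t')}(\kappa^{-1}A)$ gives, pathwise, that the $\kappa$-scaled persistence diagram of $\xi(t')$ on $\kappa^{-1}A$ equals the persistence diagram of $S_\kappa\xi(t')$ on $A$, whence upon taking expectations
\begin{equation*}
\mathfrak{p}(t,A)(B)=\mathfrak{p}(t',\kappa^{-1}A)(\kappa^{-1}B)\qquad\text{for all }A\in\mathcal{B}^n_b,\ B\in\mathcal{B}(\Delta).
\end{equation*}

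Finally I would pass to the large-volume limit. Dividing by $\lambda_n(A_k)$ and using $\lambda_n(A_k)=\kappa^n\lambda_n(\kappa^{-1}A_k)$ gives
\begin{equation*}
\frac{\mathfrak{p}(t,A_k)(B)}{\lambda_n(A_k)}=\kappa^{-n}\,\frac{\mathfrak{p}(t',\kappa^{-1}A_k)(\kappa^{-1}B)}{\lambda_n(\kappa^{-1}A_k)}.
\end{equation*}
Since $\{\kappa^{-1}A_k\}$ is again a convex averaging sequence (scaling by a positive constant preserves convexity, nestedness, $r(\cdot)\to\infty$ and the quermassintegral bound (iv)), Theorem \ref{ThmExistenceLimitingRadonMeasure} applies to the right-hand side, and letting $k\to\infty$ yields $\mathfrak{P}(t)(B)=(t/t')^{-n\eta_1}\mathfrak{P}(t')((t/t')^{-\eta_1}B)$, which is exactly the scaling relation of Definition \ref{DefiScaling} with $\eta_2=n\eta_1$.

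The hard part will live entirely in this last step and is twofold. First, one must know that the limit furnished by Theorem \ref{ThmExistenceLimitingRadonMeasure} is independent of the chosen convex averaging sequence, so that the limit of the right-hand side is genuinely $\mathfrak{P}(t')$ and not a sequence-dependent measure; this should follow from the fact that for a stationary process the per-volume limit is an intrinsic density, in the spirit of the mean-density statement of Proposition \ref{PropErgodicityMeanDensity}. Second, vague convergence controls integrals against compactly supported continuous test functions rather than values on an arbitrary Borel set $B$, so the set-level identity must be read either for $B$ with $\mathfrak{P}(t')$-null boundary or, more safely, established first against $f\in C_c(\Delta)$ (via the change of variables $x\mapsto\kappa x$) and only then interpreted measure-theoretically. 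Note also that the clean scaling relation holds only for the limits: the finite-volume identity carries the rescaled region $\kappa^{-1}A_k$, and it is precisely the volume normalization that absorbs this region dependence as $k\to\infty$. Once these points are settled the remaining computation is routine.
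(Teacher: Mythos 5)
Your argument is correct, and it reaches the paper's conclusion $\mathfrak{P}(t)(B)=(t/t')^{-n\eta_1}\mathfrak{P}(t')((t/t')^{-\eta_1}B)$ by the same underlying mechanism --- the distributional identity $\xi(t)\stackrel{\mathrm{d}}{=}S_\kappa\,\xi(t')$ with $\kappa=(t/t')^{\eta_1}$, combined with the covariance $\check{C}_r(\kappa X)=\check{C}_{r/\kappa}(X)$ of the \v{C}ech filtration under dilation --- but the execution differs in a way worth noting. The paper realizes the coupling explicitly by setting $X_{\tilde{\xi}}(t,A)=(t^{\eta_1}X_{\tilde{\xi}_0}(A))\cap A$, carries the scaling through \emph{persistent Betti numbers} to obtain $\hat{\beta}^{r,s}_\ell(t)=t^{-n\eta_1}\hat{\beta}^{t^{-\eta_1}r,\,t^{-\eta_1}s}_\ell(1)$ via Theorem \ref{ThmStrongLawPersBetti}, and then invokes the argument of Theorem 1.5 of \cite{hiraoka2018} to convert this family of scalar identities into the statement about the limiting measure. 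You instead work directly with the persistence diagram measures: the pathwise identity $X_{S_\kappa\xi(t')}(A)=\kappa\,X_{\xi(t')}(\kappa^{-1}A)$ gives the exact finite-volume relation $\mathfrak{p}(t,A)(B)=\mathfrak{p}(t',\kappa^{-1}A)(\kappa^{-1}B)$, after which Theorem \ref{ThmExistenceLimitingRadonMeasure} applied to the convex averaging sequence $\{\kappa^{-1}A_k\}$ finishes the job. This buys you a cleaner passage to the limit (no detour through persistent Betti numbers and no appeal to the internals of \cite{hiraoka2018} beyond the limit theorem itself) and a symmetric statement in $t,t'$ rather than normalization to $t'=1$; the cost is that you must justify the two points you flag yourself, namely sequence-independence of the limit in Theorem \ref{ThmExistenceLimitingRadonMeasure} (which holds, since its proof identifies the limit with the cube limit of \cite{hiraoka2018}) and the upgrade from vague convergence to the set-level identity (which follows by testing against $f\circ m_\kappa$ for $f\in C_c(\Delta)$, two Radon measures with equal $C_c$-integrals being equal). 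Your verification that dilation preserves all four defining properties of a convex averaging sequence, including the quermassintegral bound (iv) via homogeneity of $W_{n-1}$, is also correct. Both caveats are resolvable exactly as you indicate, so the proof is complete once they are written out.
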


\begin{proof}
Let $\tilde{\xi}_0$ be the Poisson point process with intensity $\gamma_0$, such that $\mathbb{E}[\tilde{\xi}_0(A)]=\gamma_0\lambda_n(A)$ for $A\in \mathcal{B}^n_b$. 
We draw a sample point cloud $X_{\tilde{\xi}_{0,\omega}}(A)$ of $\tilde{\xi}_0$ and define for all $t\in [1,\infty)$
\begin{equation*}
X_{\tilde{\xi}_\omega}(t,A):= \big(t^{\eta_1} X_{\tilde{\xi}_{0,\omega}}(A)\big)\cap A.
\end{equation*}
This defines a point process $\tilde{\xi}(t)$ with $\# X_{\tilde{\xi}_\omega}(t,A) = \tilde{\xi}_{0,\omega}(t^{-\eta_1}A)$ for each $t$. 
Its intensity can be computed,
\begin{equation*}
\mathbb{E}[\tilde{\xi}(t,A)] = \mathbb{E}[\# ((t^{\eta_1} X_{\tilde{\xi}_{0,\omega}}(A))\cap A)] = \mathbb{E}[\# X_{\tilde{\xi}_{0,\omega}}(t^{-\eta_1}A)] = \gamma_0 t^{-n\eta_1} \lambda_n(A),
\end{equation*}
that is, the intensities of $\xi(t)$ and $\tilde{\xi}(t)$ agree for all $t$. 
Also, we have 
\begin{equation*}
\mathbb{P}[\tilde{\xi}(t,A)=k] = \mathbb{P}[\tilde{\xi}_0(t^{-\eta_1}A)=k] = \mathrm{Po}(\gamma_0 \lambda_n(t^{-\eta_1}A);k) = \mathrm{Po}(\gamma(t)\lambda_n(A);k)\,.
\end{equation*}
Furthermore, let $A_1,A_2\subset \rr^n$ be disjoint Borel sets. 
Then, $\tilde{\xi}_0(A_1)$ and $\tilde{\xi}_0(A_2)$ are independent random variables, since $\tilde{\xi}_0$ is a Poisson point process. 
Let $x_i\in A_i$. 
Then, $|x_1-x_2|>0$ and $t^{-\eta_1}|x_1-x_2|>0$, i.e., $t^{-\eta_1}A_1$ and $t^{-\eta_1}A_2$ are disjoint, too. 
Thus, $\tilde{\xi}(t,A_1)$ and $\tilde{\xi}(t,A_2)$ are also independent random variables. 
The Poisson point process being uniquely characterized by the properties (i) to (iii) of \Cref{DefTimeDepPoisson}, $\xi(t)$ and $\tilde{\xi}(t)$ can be identified for all $t$.

We employ the strong law of large numbers for persistent Betti numbers (\Cref{CorStrongLawPersBetti}) and denote the limiting persistent Betti numbers at time $t$ as $\hat{\beta}^{r,s}_\ell(t)$, such that for $k\to\infty$:
\begin{equation*}
\frac{\mathbb{E}[\beta^{r,s}_\ell (\mathcal{C}(X_{\xi}(t,A_k)))]}{\lambda_n(A_k)}\to \hat{\beta}^{r,s}_\ell (t).
\end{equation*}
We compute,
\begin{subequations}
\begin{align*}
\frac{\mathbb{E}[\beta^{r,s}_\ell (\mathcal{C}(X_{\xi}(t,A_k)))]}{\lambda_n(A_k)} =&\; \frac{\mathbb{E}[\beta^{r,s}_\ell (\mathcal{C}(t^{\eta_1}(X_{\tilde{\xi}_0}(A_k)\cap t^{-\eta_1}A_k)))]}{\lambda_n(A_k)}\\
=&\; \frac{\mathbb{E}[\beta_\ell^{t^{-\eta_1}r,t^{-\eta_1}s}(\mathcal{C}(X_{\tilde{\xi}_0}(A_k)\cap t^{-\eta_1}A_k))]}{\lambda_n(A_k)}\\
=&\; \frac{t^{-n\eta_1} \mathbb{E}[\beta_\ell^{t^{-\eta_1}r,t^{-\eta_1}s}(\mathcal{C}(X_{\tilde{\xi}_0}(t^{-\eta_1}A_k)))]}{\lambda_n(t^{-\eta_1}A_k)}\overset{k\to\infty}{\longrightarrow} t^{-n\eta_1} \hat{\beta}_\ell^{t^{-\eta_1}r,t^{-\eta_1}s}(1).
\end{align*}
\end{subequations}
Both limits need to agree, i.e.,
\begin{equation}\label{EqPersBettiNumbersScalingPoisson}
\hat{\beta}^{r,s}_\ell (t) =  t^{-n\eta_1} \hat{\beta}_\ell^{t^{-\eta_1}r,t^{-\eta_1}s}(1).
\end{equation}

With $\mathfrak{p}(t,A_k)$ the persistence diagram expectation measure computed from $\xi(t)$ and evaluated on $A_k$, by means of \Cref{ThmExistenceLimitingRadonMeasure} we obtain the existence of a unique Radon measure $\mathfrak{P}(t)$ at each time $t$, such that for $k\to\infty$
\begin{equation*}
\frac{\mathfrak{p}(t,A_k)}{\lambda_n(A_k)}\overset{v}{\longrightarrow} \mathfrak{P}(t).
\end{equation*}
Starting from \Cref{EqPersBettiNumbersScalingPoisson}, identical arguments which lead to the proof of Theorem 1.5 in \cite{hiraoka2018} here lead to
\begin{equation*}
\mathfrak{P}(t)(B) = t^{-n\eta_1} \mathfrak{P}(1)(t^{-\eta_1}B)
\end{equation*}
for any $B\in\mathcal{B}(\Delta)$.
\end{proof}

We note that \Cref{EqPersBettiNumbersScalingPoisson} is the usual scaling property of persistent Betti numbers for Poisson point processes \cite{goel2019strong}.

\subsection{Scaling from function sublevel sets}\label{SecScalingFunctionSublevels}\label{SecExample2}
The following example establishes a relation between self-similar scaling exponents derived from moments of random functions and those derived from persistence diagram measures.
This is based on the assumption of an underlying rescaling of the metric.

\paragraph{Scaling functions.}
Let $(T_0,T_1)\subseteq \rr$ be a time interval. 
Let $(\Omega, \mathcal{E}, \mathbb{P})$ be a probability space with $\Omega$ a set of smooth functions $Y:(T_0,T_1)\times \rr^n \to \rr$, $\mathcal{E}$ some event space, such that for arbitrary $t,t'\in (T_0,T_1), x\in \rr^n$:
\begin{equation}\label{EqAlmostSureRescalingSamples}
    \mathbb{P}\left[Y(t,x)=\left(\frac{t}{t'}\right)^\alpha Y(t',(t/t')^\beta x)\right] = 1,
\end{equation}
i.e., a.s. all samples reveal self-similar scaling in time. 
Then all moments of $\mathbb{P}$ show self-similar behavior as well:
\begin{align*}
    \mathbb{E}\left[\prod_{i=1}^N Y(t,x_i)\right] = \int \prod_{i=1}^N Y(t,x_i) \mathbb{P}(\dd Y) =&  \left(\frac{t}{t'}\right)^{N\alpha} \int \prod_{i=1}^N Y(t',(t/t')^\beta x_i) \mathbb{P}(\dd Y)\\
    =& \left(\frac{t}{t'}\right)^{N\alpha} \mathbb{E}\left[ \prod_{i=1}^N Y(t',(t/t')^\beta x_i)\right],
\end{align*}
where $x_i\in\rr^n$ for $i=1,\dots,N$, $N\in\nn$. 
Let $f:\rr\to \rr$ be a homogeneous function of degree $\kappa>0$ with $f(x)\geq 0$ for all $x\in\rr$. 
We define submanifolds of $\rr^n$ via
\begin{equation*}
    X_\nu[Y](t):=\{x\in \rr^n\, |\, f(Y(t,x))\leq \nu\}.
\end{equation*}
Then, a.s. we find by \Cref{EqAlmostSureRescalingSamples}
\begin{equation}\label{EqScalingSublevels}
    X_\nu[Y](t) = \left\{ x\in \rr^n \, \left|\, f(Y(t',(t/t')^\beta x))\leq (t/t')^{-\kappa \alpha} \nu\right.\right\} = \left(\frac{t}{t'}\right)^{-\beta} X_{(t/t')^{-\kappa \alpha} \nu} [Y](t').
\end{equation}

\paragraph{Scaling of the persistence diagram measure.}
We demonstrate in the following that this likely implies scaling of persistence diagram measures computed from point clouds appropriately sampled from $X_\nu[Y](t)$.

Let $\{A_k\}$ be a balanced convex averaging sequence in $\rr^n$, $\varepsilon>0$. 
We choose $q$ points $x_i\in X_\nu[Y](t)\cap A_k$ such that $\bigcup_{i=1}^q B_\varepsilon(x_i)$ is a cover of $X_\nu[Y](t)\cap A_k$. 
From $X_\nu[Y](t)\cap A_k$ we sample a point cloud $X_{\nu,M}[Y](t,A_k)$ consisting of a number $M$ of i.i.d. uniformly distributed points. 
Then there exists $\alpha>0$ such that 
\begin{equation*}
\mathbb{E}[\#(X_{\nu,M}[Y](t,A_k) \cap B_{\varepsilon}(x_i))]>\alpha
\end{equation*}
for all $i$. 

With quantifiably high confidence, homology can be inferred from random point samples of manifolds as in~\cite{niyogi2008finding}, which includes a range of probability bounds on point sampling.
In particular, by Lemma 5.1 of~\cite{niyogi2008finding} we then have for $\delta>0$ that with probability $1-\delta$
\begin{equation*}
    X_{\nu,M}[Y](t,A_k) \cap B_{\varepsilon}(x_i)\neq \emptyset
\end{equation*}
for all $i$, given that $M> (\log q - \log \delta)/\alpha$. 
Thus, with probability $1-\delta$ we find
\begin{align*}
    X_{\nu,M}[Y](t,A_k)\subseteq&\; X_\nu[Y](t)\cap A_k \subseteq \bigcup_{x\in X_{\nu,M}[Y](t,A_k)} B_{2\varepsilon}(x) \subseteq\; (X_\nu[Y](t)\cap A_k )\oplus B_{2\varepsilon}(0),
\end{align*}
$\oplus$ denoting the Minkowski sum among subsets of $\rr^n$.
To this end, we have shown that with probability $1-\delta$ a $2\varepsilon$-interleaving between point clouds $X_{\nu,M}[Y](t,A_k)$ and sublevels sets $X_\nu[Y](t)\cap A_k$ exists.

For simplicity we denote the filtration of \v{C}ech complexes of $X_{\nu,M}[Y](t,A_k)$ by $\mathcal{C}_t$ and the filtration of growing balls around $X_\nu[Y](t)\cap A_k$ by $\mathcal{D}_t$, i.e., $( (X_\nu[Y](t)\cap A_k) \oplus B_r(0))_r$. 
We denote the persistence module computed via simplicial homology from $\mathcal{C}_t$ by $H_*(\mathcal{C}_t)$ and the persistence module computed via singular homology from $\mathcal{D}_t$ by $H_*(\mathcal{D}_t)$. 
Given the $2\varepsilon$-interleaving, which exists with probability $1-\delta$, we then find an upper bound for their interleaving distance,
\begin{align*}
    d_I(H_*(\mathcal{C}_t),H_*(\mathcal{D}_t)) =&\; \inf \{ \epsilon \, |\, \textrm{there exists an }\epsilon\textrm{-interleaving between }H_*(\mathcal{C}_t),H_*(\mathcal{D}_t)\}\\
    \leq&\; 2\varepsilon.
\end{align*}
Following Theorem 3.5 of~\cite{bauer2013induced} we find equality between the bottleneck and the interleaving distance in our case:
\begin{align*}
    & d_B(H_*(\mathcal{C}_t),H_*(\mathcal{D}_t))\\
    & = \inf \{ \epsilon\in[0,\infty) \, |\, \textrm{there exists an }\epsilon\textrm{-matching between }H_*(\mathcal{C}_t),H_*(\mathcal{D}_t)\} \\ 
    & =d_I(H_*(\mathcal{C}_t),H_*(\mathcal{D}_t))\leq 2\varepsilon,
\end{align*}
with an $\epsilon$-matching defined as in~\cite{bauer2013induced}.
This implies that if $B\in\mathcal{B}(\Delta)$ fulfills $(b,d)\in B \,\Longrightarrow\, d-b>4\varepsilon$, then the persistence diagram measures are related as follows:
\begin{equation}\label{EqEstimatingPointCloudPersDiag}    \rho(\mathcal{C}_t)(B)\leq \rho(\mathcal{D}_t)(B\oplus B_{2\varepsilon}(0))\quad \textrm{and} \quad \rho(\mathcal{D}_t)(B)\leq \rho(\mathcal{C}_t)(B\oplus B_{2\varepsilon}(0)),
\end{equation}
where the dependence on the sample function $Y$ is implicit here. 
Thus, we find with probability $1-\delta$
\begin{equation}\label{EqDCDInequality}
    \rho(\mathcal{D}_t)(B)\leq \rho(\mathcal{C}_t)(B\oplus B_{2\varepsilon}(0))\leq \rho(\mathcal{D}_t)(B\oplus B_{4\varepsilon}(0))
\end{equation}
and can likely estimate the persistence diagram measure computed from sampled point clouds, $\rho(\mathcal{C}_t)$, from the one of sublevels, directly, $\rho(\mathcal{D}_t)$. 
Employing \Cref{EqScalingSublevels}, we find
\begin{equation*}
    X_\nu[Y](t)\cap A_k = \left(\frac{t}{t'}\right)^{-\beta} (X_{(t/t')^{-\kappa\alpha}\nu}[Y](t')\cap (t/t')^\beta A_k).
\end{equation*}
From this we find the time-dependence of the singular homology groups,
\begin{equation}\label{EqScalingHomologyGroups}
    H_*((X_\nu[Y](t)\cap A_k )\oplus  B_r(0)) = H_*((X_{(t/t')^{-\kappa\alpha}\nu}[Y](t')\cap (t/t')^\beta A_k ) \oplus  B_{(t/t')^\beta r}(0)).
\end{equation}
Hence, we have for any $B\in\mathcal{B}(\Delta)$
\begin{equation}
    \rho(\mathcal{D}_t)(B) = \rho(\mathcal{D}_{t,t'}')((t/t')^\beta B),
\end{equation}
with $\mathcal{D}'_{t,t'}$ the filtration $(( X_{(t/t')^{-\kappa\alpha}\nu}[Y](t')\cap (t/t')^\beta A_k ) \oplus  B_r(0))_r$ of topological spaces. Insertion into \Cref{EqEstimatingPointCloudPersDiag} leads with probability $1-\delta$ to
\begin{equation}\label{EqDCDttprimeInequality}
    \rho(\mathcal{D}_{t,t'}')((t/t')^\beta B) \leq \rho(\mathcal{C}_t)(B\oplus B_{2\varepsilon}(0)) \leq \rho(\mathcal{D}_{t,t'}')((t/t')^\beta B \oplus  B_{4(t/t')^\beta \varepsilon}(0)).
\end{equation}
For any given $\varepsilon$ we can always choose the number of sampled points $M$ sufficiently large for the described constructions to apply. 
To this end, the limit $\varepsilon\to 0$ may be taken in \Cref{EqDCDInequality,EqDCDttprimeInequality}, leading with probability $1-\delta$ for any $B\in\mathcal{B}(\Delta)$ to
\begin{equation}\label{EqScalingrhoCt}
    \lim_{\varepsilon\to 0}\rho(\mathcal{C}_t)(B) = \rho(\mathcal{D}_{t,t'}')((t/t')^\beta B) = \lim_{\varepsilon\to 0}\rho(\mathcal{C}_{t,t'}')((t/t')^\beta B).
\end{equation}
Here, $\mathcal{C}_{t,t'}'$ denotes the filtration of \v{C}ech complexes of $X_{(t/t')^{-\kappa\alpha}\nu, M}[Y](t,(t/t')^\beta A_k)$.

\paragraph{Self-similar scaling of the persistence diagram expectation measure.}
The scaling of sublevel set homology groups as in \Cref{EqScalingHomologyGroups} still encodes temporally varying balanced convex averaging sequences $(t/t')^\beta A_k$. 
We show that for particular sets of functions $\Omega$ the sampling of point clouds makes up a stationary point process, leading via \Cref{ThmExistenceLimitingRadonMeasure} to an overall scaling factor of the persistence diagram expectation measure in the large-volume limit. 
This results in its self-similar scaling. 
As a byproduct the packing relation is independently from the proof of \Cref{ThmPackingRelation} demonstrated for this particular example.

We assume that the samples $f\circ Y$ resemble stationarity in probability for each time individually, i.e., for all $-\infty \leq a\leq b \leq \infty$, $x,y\in \rr^n$ and $t\in (T_0,T_1)$,
\begin{equation}\label{EqAssumptionStationarityProbability}
    \mathbb{P}[a\leq f(Y(t,x)) \leq b] = \mathbb{P}[a\leq f(Y(t,y)) \leq b].
\end{equation}
Then, any point $x\in A_k$ is equally likely included in a point cloud $X_{\nu,M}[Y](t,A_k)$, given that $X_{\nu,M}[Y](t,A_k)$ consists of i.i.d. uniformly distributed points in $X_\nu[Y](t)\cap A_k$. To this end, under assumption (\ref{EqAssumptionStationarityProbability}) $Y(t,\cdot) \mapsto X_{\nu,M}[Y](t,A_k)$ defines a stationary point process on $A_k$ with $Y\in \Omega$. 
Furthermore, the defined point process has all finite moments, again exploiting that $X_{\nu,M}[Y](t,A_k)$ consists of i.i.d. uniformly distributed points.

Thus, we may apply \Cref{ThmExistenceLimitingRadonMeasure}, which yields the existence of limiting volume-averaged persistence diagram expectation measures. 
Let $\mathfrak{p}(\mathcal{C}_{t,t'}'):= \mathbb{E}[\rho(\mathcal{C}_{t,t'}')]$ be the persistence diagram expectation measure computed from the persistence diagram measures of the \v{C}ech complexes of point clouds sampled from sublevel sets with, as a special case, $\mathfrak{p}(\mathcal{C}_t):= \mathbb{E}[\rho(\mathcal{C}_t)]$. 
Implicitly, it depends on the parameter $\varepsilon$. 
By \Cref{ThmExistenceLimitingRadonMeasure} we obtain that a limiting Radon measure $\mathfrak{P}_t$ on $\Delta$ exists, such that for any continuous function $g$ on $\Delta$ with compact support:
\begin{equation}\label{EqWeakLimitTimetRadonMeasure}
    \lim_{k\to\infty} \frac{1}{\lambda_n(A_k)} \lim_{\varepsilon\to 0}\int_\Delta g\,  \mathfrak{p}(\mathcal{C}_t) = \int_\Delta g\,  \mathfrak{P}_t.
\end{equation}
Regard here that $\mathfrak{p}(\mathcal{C}_t)$ implicitly depends on the $A_k$. 
More generally, we find that a Radon measure $\mathfrak{P}_{t,t'}$ exists, which satisfies again for all continuous functions $g$ on $\Delta$ with compact support:
\begin{align*}
     & \left(\frac{t}{t'}\right)^{n\beta}\int_\Delta g((t/t')^{-\beta} x)\,  \mathfrak{P}_{t,t'}(\dd x) \\
     &\qquad =\; \left(\frac{t}{t'}\right)^{n\beta}\lim_{k\to\infty} \frac{1}{\lambda_n((t/t')^\beta A_k)} \lim_{\varepsilon\to 0}\int_\Delta g((t/t')^{-\beta} x)\,  \mathfrak{p}(\mathcal{C}_{t,t'}')(\dd x)\\
     &\qquad =\;\lim_{k\to\infty} \frac{1}{\lambda_n(A_k)} \lim_{\varepsilon\to 0}\int_\Delta g((t/t')^{-\beta}x)\,  \mathfrak{p}(\mathcal{C}_{t,t'}')(\dd x).
\end{align*}
We employed here \Cref{ThmExistenceLimitingRadonMeasure} together with $\{(t/t')^\beta A_k\}$ being a balanced convex averaging sequence. 
As a special case we have $\mathfrak{P}_t = \mathfrak{P}_{t,t}$.
Using \Cref{EqScalingrhoCt}, we arrive at
\begin{equation*}
    \lim_{k\to\infty} \frac{1}{\lambda_n(A_k)} \lim_{\varepsilon\to 0}\int_\Delta g(x)\,  \mathfrak{p}(\mathcal{C}_t)(\dd x) = \lim_{k\to\infty} \frac{1}{\lambda_n(A_k)} \lim_{\varepsilon\to 0}\int_\Delta g((t/t')^{-\beta}x)\,  \mathfrak{p}(\mathcal{C}_{t,t'}')(\dd x),
\end{equation*}
which leads with \Cref{EqWeakLimitTimetRadonMeasure} to
\begin{equation}\label{EqLargeVolumeAverageScaling}
    \int_\Delta g(x) \,\mathfrak{P}_t (\dd x) = \left(\frac{t}{t'}\right)^{n\beta} \int_\Delta g((t/t')^{-\beta}x)\, \mathfrak{P}_{t,t'}(\dd x).
\end{equation}
If we assume that in a sufficiently large regime of filtration parameters $\nu$ under consideration persistence diagram expectation measures are independent from $\nu$, then $\mathfrak{P}_{t,t'}$ is independent from $t'$ up to the power-law denoted in the argument of $g$ on the right-hand side of \Cref{EqLargeVolumeAverageScaling}.
In the limit of large volumes we obtain self-similar scaling of limiting volume-averaged persistence diagram expectation measures, including an independent derivation of the packing relation for the considered example.

\subsection{Application: Numerical simulations in non-equilibrium quantum physics}\label{SecExampleQuantumPhysics}
Studying quantum physics far from equilibrium, indications for self-similar scaling of persistence diagram expectation measures have been found in numerical simulations of the non-relativistic Bose gas by the authors of the present manuscript and others~\cite{spitz2020finding}. 
The scaling behavior can be attributed to the existence of a so-called non-thermal fixed point in the dynamics of the system, characterized by such scaling and typically verified by investigating the behavior of correlation functions. 
The study demonstrated for the first time, that persistent homology can be used to detect dynamical quantum phenomena.

The simulations have been carried out in the classical-statistical regime of many particles interacting comparably weakly. 
In this regime the quantum physics can be accurately mapped to a Gaussian ensemble of complex-valued fields at initial time $t=0$, $\psi_\omega(0):\Lambda \to \cc$, $\omega\in \Omega$, $\Lambda\subset \rr^2$ a uniform finite square lattice with constant lattice spacing, which are then time-evolved individually according to the so-called Gross-Pitaevskii differential equation in order to obtain $\psi_\omega(t)$ from $\psi_\omega(0)$ for all $t\in [0,\infty)$. 
This has yielded an ensemble of fields $(\psi_\omega)_{\omega\in\Omega}$, $\psi_\omega:[0,\infty)\times \Lambda\to\cc$. 
Theoretical predictions have been computed as expectation values with respect to this ensemble of fields.

Given a sample $\psi_\omega$, point clouds have been constructed as subsets of the lattice $\Lambda$ at individual times for filtration parameters $\bar{\nu} \in [0,\infty)$,
\begin{equation*}
X_{\bar{\nu},\omega}(t):= |\psi_\omega(t)|^{-1} [0,\bar{\nu}]\subset \rr^2.
\end{equation*}
$\Lambda$ being finite, the $X_{\bar{\nu},\omega}(t)$ are finite sets, too. 
Their alpha complexes and persistence diagrams have been computed. 
For an impression of alpha complexes of different radii see Fig. \ref{FigNonequQuantumPhysicsAlphaComplexes}. 
Finally, expectations for functional summaries such as smooth variants of the distribution of birth radii, i.e., $\mathfrak{p}(t,\Lambda)([r_1,r_2]\times[0,\infty))$, and $\mathfrak{d}_{\max}(t,\Lambda)$ have been computed.%
\footnote{Numerically, convolutions with a Gaussian mollifier of arbitrary width, which are necessary to obtain smoothed distributions of death radii, for instance, are irrelevant. 
Simple binning procedures for such distributions and averaging over a sufficiently large number of samples allow for a proper computation of expectation values.}

\begin{figure}
\begin{center}
\includegraphics[scale=0.7]{{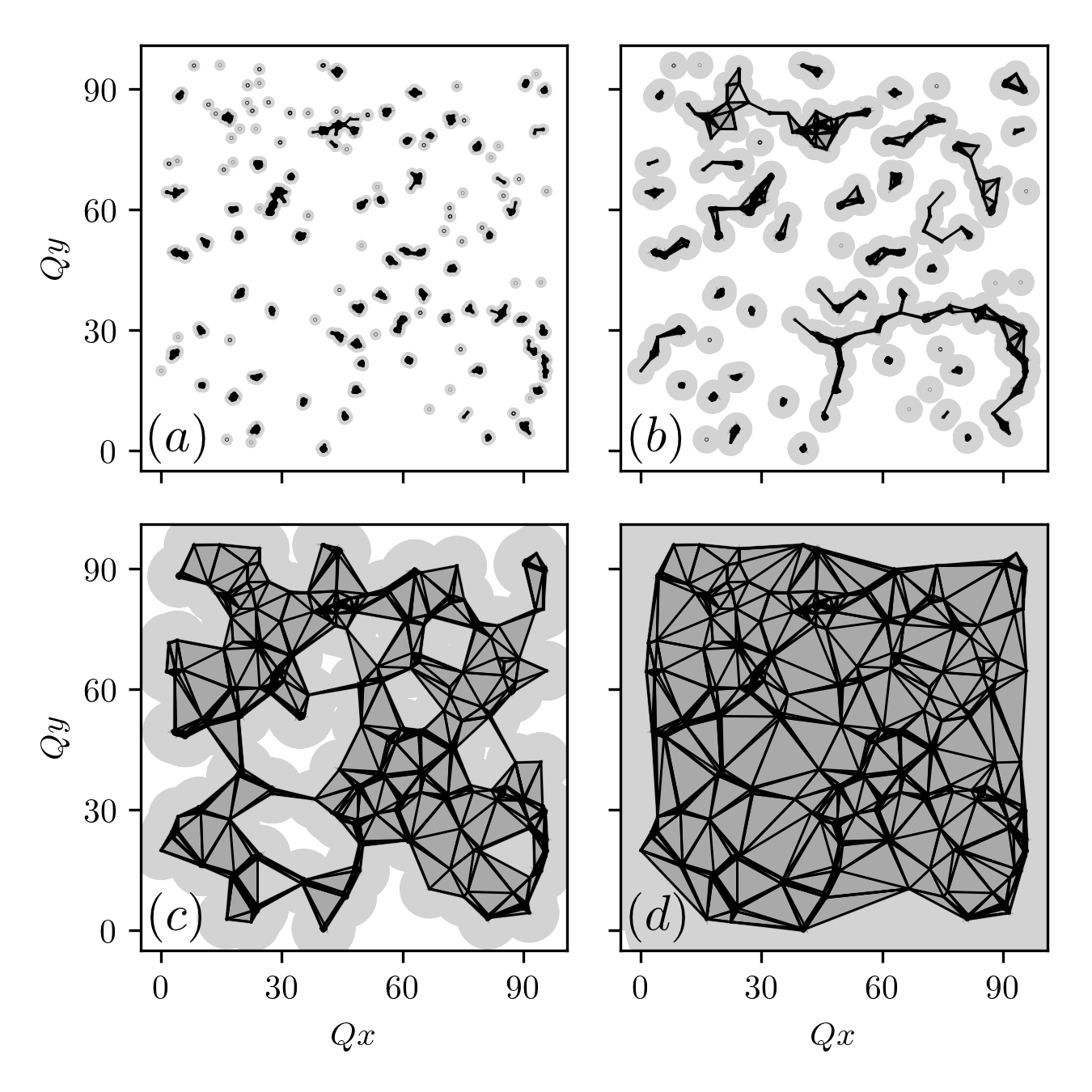}}
\caption{Alpha complexes of increasing radii from (a) to (d) at a non-thermal fixed point, reprinted from~\cite{spitz2020finding}.}\label{FigNonequQuantumPhysicsAlphaComplexes}
\end{center}
\end{figure}

\begin{figure}
\begin{center}
\includegraphics[scale=0.85]{{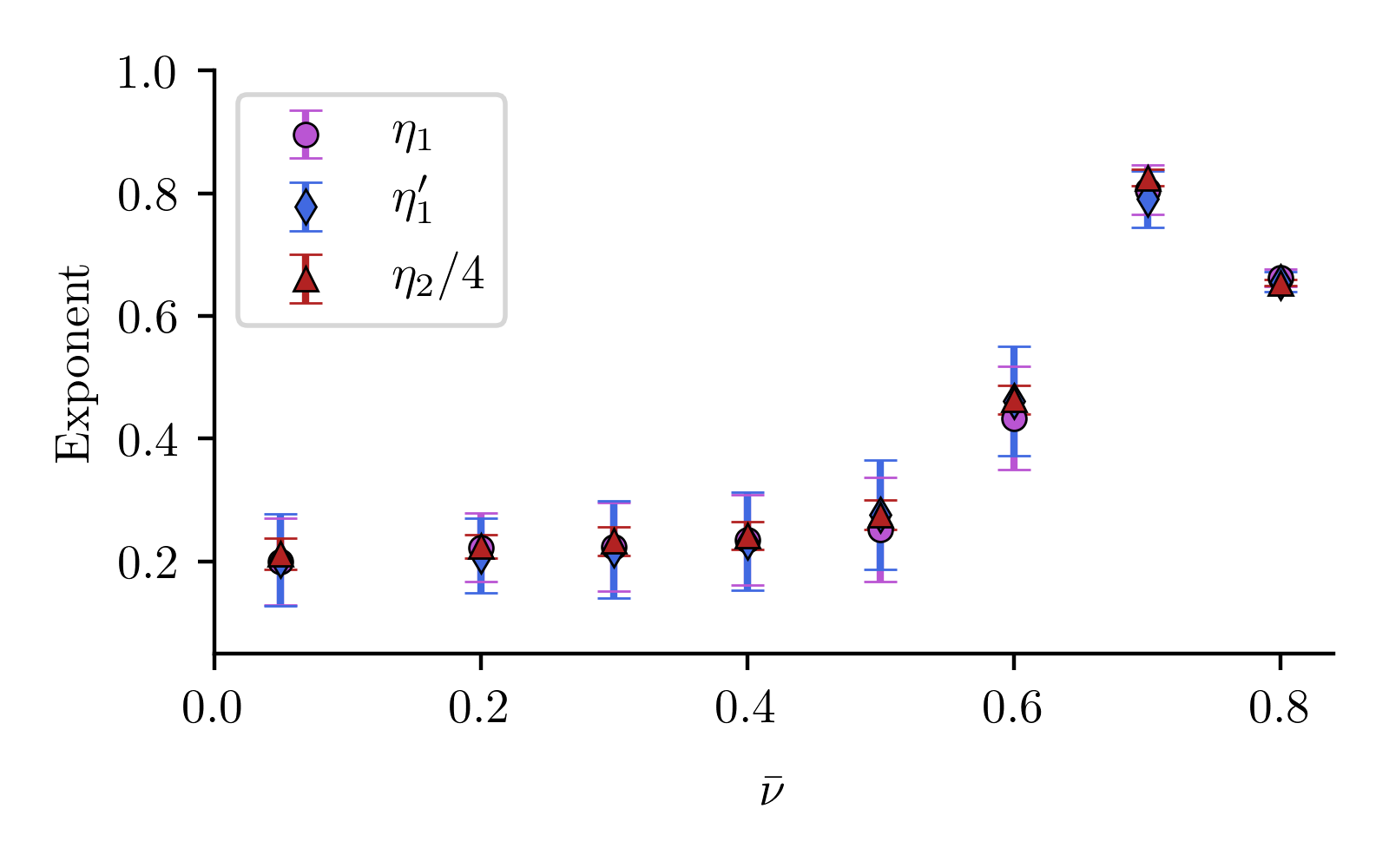}}
\caption{Self-similar scaling exponents at a non-thermal fixed point, reprinted from~\cite{spitz2020finding}. $\eta_1'$ has been introduced here as an independent scaling exponent for death radii in \Cref{EqScalingAnsatzLebesgueDensity}.}\label{FigNonequQuantumPhysicsExponents}
\end{center}
\end{figure}

A scaling ansatz has been made for the Lebesgue density of the persistence diagram expectation measure,
\begin{equation}\label{EqScalingAnsatzLebesgueDensity}
\tilde{\mathfrak{p}}(t,\Lambda)(b,d) = (t/t')^{-\eta_2} \tilde{\mathfrak{p}}(t',\Lambda)((t/t')^{-\eta_1}b, (t/t')^{-\eta_1}d),
\end{equation}
resulting in the self-similar scaling of persistent homology quantities. 
A possible singular contribution to the persistence diagram expectation measure as it appears in \Cref{PropExistenceLebesgueDensities} has been ignored. 
Resulting exponents are summarized in \Cref{FigNonequQuantumPhysicsExponents}. 
By simple application of the transformation theorem we find
\begin{equation*}
\mathfrak{n}(t,A) = (t/t')^{2\eta_1 - \eta_2} \mathfrak{n}(t',A).
\end{equation*}
Comparing with a family of persistence diagram expectation measures that scales self-similarly with exponents $\tilde{\eta}_1,\tilde{\eta}_2$ but yields the same scaling behavior of geometric quantities, we obtain $\eta_1=\tilde{\eta}_1$ and via \Cref{LemmaScalingBehavior}
\begin{equation*}
\eta_2 = \tilde{\eta}_2 + 2\tilde{\eta}_1. 
\end{equation*}
The packing relation given in \Cref{ThmPackingRelation} then translates for the Lebesgue density scaling exponents to $\eta_2 = (2+n)\eta_1$, in accordance with \Cref{FigNonequQuantumPhysicsExponents}.

Numerically, the underlying mathematical assumptions of this work can be shown, in particular stationarity and ergodicity of the corresponding point process. 
It has been checked that varying the distance between neighboring lattice sites does not affect the results, which is theoretically motivated by the stability theorems for persistent homology~\cite{Cohen-Steiner2007,Cohen-Steiner2010}. 
To this end, the specific microscopic lattice geometry can be regarded as unimportant for the displayed results. 
Furthermore, for the chosen initial ensemble of fields, $\psi_\omega(0)$, physical quantities numerically have revealed (approximate) translation-invariance across the lattice, reflecting stationarity. 
It is easy to verify numerically that classical-statistical simulations are ergodic in the sense of recovering expectation values for intensive quantities in the limit of large lattices (in the field of quantum dynamics this property is better known as self-averaging).

\section{Further questions}\label{SecFurtherQuestions}
Later investigations in mind, we state a few further questions: 
\begin{enumerate}
\item The self-similar scaling behavior of the persistence diagram expectation measure of a time-dependent Poisson point process and of scaling function sublevel sets has been derived without using the packing relation, though confirming the latter. 
While the packing relation as derived in this work only holds for filtrations of complexes with persistent homology groups isomorphic to those of the \v{C}ech complex filtration, the derivation for the Poisson point process holds more generally. 
This raises the question if the packing relation can be extended to more general situations. 
A related question is if the ergodicity assumption, which is crucial at various points in this work, can be relaxed.
\vs
\item In the physics application we describe temporally self-similar scaling of persistence diagram expectation measures has been found. 
In this case, however, also the correlation functions of the point clouds show self-similar scaling~\cite{spitz2020finding}. 
Do there exist other parameter-dependent point processes, for which persistence diagram expectation measures show self-similar scaling but the correlation functions do not? 
This would provide further motivation for other applications in the natural sciences.
\vs
\item With respect to applications an extension of our results to filtrations of weighted simplicial complexes would be of interest. 
\end{enumerate}

\section*{Acknowledgements}
We cordially thank J. Berges, M. Oberthaler, H. Edelsbrunner, M. Bleher and M. Schmahl for discussions and collaborations on related work. 
We thank D. Kirchhoff for highlighting to us the idea of the example given in Sec. \ref{SecScalingFunctionSublevels}. 
This work is part of and supported by the Collaborative Research Centre, Project-ID No. 273811115, SFB 1225 ISOQUANT of the Deutsche Forschungsgemeinschaft (DFG, German Research Foundation), and supported by the Deutsche Forschungsgemeinschaft (DFG, German Research Foundation) under Germany's Excellence Strategy EXC 2181/1 - 390900948 (the Heidelberg STRUCTURES Excellence Cluster). 
AW acknowledges support from the Klaus Tschira Foundation.

\appendix

\section{Bounded total persistence}\label{AppendixBoundedTotalPersistence}
In this appendix we discuss the concept of bounded total persistence, introduced in~\cite{Cohen-Steiner2010}.
With $(M,d)$ a compact, triangulable metric space, let $f:M\to \rr$ be a Lipschitz function with Lipschitz constant $\lip(f) = \inf \{c\in \rr\,|\, |f(x)-f(y)|\leq c\, d(x,y)\,\forall\,x,y\in M\}$. 
We denote the $\ell$-th persistence diagram computed via singular homology from the filtration of sublevel sets of a Lipschitz function $f$ by $\dgm_\ell(f)$ and call $f$ \emph{tame}, if all its persistence diagrams are finite. 
Let $K$ be a finite simplicial complex triangulating $M$ via a triangulation homeomorphism $\vartheta$ and set $\textrm{mesh}(K):=\max_{\sigma \in K} \diam(\sigma)$, $\diam(\sigma):=\max_{x,y\in \sigma} d(\vartheta(x), \vartheta(y))$ the diameter of a simplex $\sigma$.  
For $r>0$ we define $N(r):=\min_{\mesh(K)\leq r} \# K$. 
We consider the total persistence diagram $\cup_\ell \dgm_\ell(f)$ of a Lipschitz function $f$, and define the \emph{degree-$k$ total persistence} of $f$ as
\begin{equation*}
\Pers_k(f):= \sum_{x\in \cup_\ell \dgm_\ell(f)} \pers(x)^k.
\end{equation*}
\vs

\begin{prop}[Technical results from \cite{Cohen-Steiner2010}]\label{PropPolynomialGrowth}
Assume that the size of the smallest triangulation of a triangulable, compact metric space $M$ \emph{grows polynomially} with one over the mesh, i.e., there exist $C_0,m$, such that $N(r)\leq C_0/r^m$ for all $r>0$. 
Let $\delta>0$ and $k=m+\delta$. Then,
\begin{equation*}
\Pers_k(f)\leq \frac{m+2\delta}{\delta} C_0  \, \lip(f)^m \amp(f)^\delta,
\end{equation*}
where $\amp(f):= \max_{x\in M} f(x) - \min_{y\in M} f(y)$ is the amplitude of $f$. 
\end{prop}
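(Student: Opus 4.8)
The plan is to reduce the statement to a bound on the \emph{survival function}
\begin{equation*}
\mu_f(\epsilon):=\#\left\{x\in\textstyle\bigcup_\ell\dgm_\ell(f)\ :\ \pers(x)>\epsilon\right\},
\end{equation*}
the number of off-diagonal persistence points of persistence exceeding $\epsilon$, and then to integrate. The clean entry point is the layer-cake identity
\begin{equation*}
\Pers_k(f)=\sum_{x}\pers(x)^k=\int_0^\infty k\,\epsilon^{k-1}\mu_f(\epsilon)\,\dd\epsilon .
\end{equation*}
Since each finite pair $(b,d)$ in a sublevel-set diagram obeys $\min_M f\le b<d\le\max_M f$, every persistence is at most $\amp(f)$; hence $\mu_f(\epsilon)=0$ for $\epsilon\ge\amp(f)$ and the integral is really over $(0,\amp(f))$. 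Everything therefore hinges on a pointwise upper bound for $\mu_f(\epsilon)$ decaying like $\epsilon^{-m}$.

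First I would establish the counting lemma $\mu_f(\epsilon)\le N\big(\epsilon/(2\lip(f))\big)$. Fix $r>0$ and pick a triangulation $K$ attaining the minimum in $N(r)$, so that $\mesh(K)\le r$ and $\#K=N(r)$; let $f_K$ be the piecewise-linear interpolant of $f$ on $K$. As $f$ is $\lip(f)$-Lipschitz and every simplex has diameter at most $r$, writing $f_K$ at a point as a convex combination of vertex values gives $\norm{f-f_K}_\infty\le\lip(f)\,r$. The stability theorem for persistence diagrams \cite{Cohen-Steiner2010} then bounds the bottleneck distance by $\lip(f)\,r$, so any point of $\dgm(f)$ whose persistence exceeds $2\lip(f)\,r$ lies at $\ell^\infty$-distance more than $\lip(f)\,r$ from the diagonal and must be matched to an off-diagonal point of $\dgm(f_K)$. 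Each such point can be charged to a distinct positive (creator) simplex of $K$, so their number is at most $\#K=N(r)$; taking $\epsilon=2\lip(f)\,r$ yields the claimed inequality.

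Combining the lemma with the polynomial-growth hypothesis $N(r)\le C_0/r^m$ gives $\mu_f(\epsilon)\le C_0\,(2\lip(f))^m\,\epsilon^{-m}$ on $(0,\amp(f))$. Substituting into the layer-cake integral and using $k=m+\delta$, so that $\epsilon^{k-1-m}=\epsilon^{\delta-1}$ is integrable at the origin, gives
\begin{equation*}
\Pers_k(f)\le k\,C_0\,(2\lip(f))^m\int_0^{\amp(f)}\epsilon^{\delta-1}\,\dd\epsilon=\frac{m+\delta}{\delta}\,C_0\,(2\lip(f))^m\,\amp(f)^\delta ,
\end{equation*}
which is already of the asserted form $\mathrm{const}\cdot\lip(f)^m\amp(f)^\delta$ with the correct powers. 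The main obstacle is precisely the constant: obtaining the sharp coefficient $\tfrac{m+2\delta}{\delta}\,C_0$ of Proposition~\ref{PropPolynomialGrowth} requires the tighter bookkeeping of \cite{Cohen-Steiner2010} in the counting step, in particular eliminating the dimensional factor $2^m$ produced by the crude mesh-to-threshold conversion above (by relating $N$ directly to the persistence scale). I would therefore concentrate the effort on making the counting lemma quantitatively sharp; the layer-cake reduction and the final integration are routine once it is in place.
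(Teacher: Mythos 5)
The paper does not prove Proposition~\ref{PropPolynomialGrowth} at all --- it is imported verbatim from \cite{Cohen-Steiner2010} --- so the comparison is against that source, and your reconstruction is indeed the argument used there: piecewise-linear approximation on a mesh-$r$ triangulation, bottleneck stability to bound the number of points of persistence above a threshold by $N(r)$, and then summation/integration over persistence scales. The individual steps are sound. Two small points you should make explicit: (i) the claim $\pers(x)\le\amp(f)$ requires that essential classes (infinite death) be excluded from $\dgm_\ell(f)$, which is consistent with the paper's convention that diagrams consist of pairs $(b,d)$ with $d\in\rr$, and matters in the intended application where $d_X$ always has one essential $H_0$ class; (ii) the injection of high-persistence points of $\dgm(f)$ into off-diagonal points of $\dgm(f_K)$ uses that a bottleneck matching is a partial bijection and that the off-diagonal points of the PL diagram, summed over all homological degrees, number at most $\#K$ --- both true, but worth a sentence.

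The one genuine shortfall, which you correctly identify yourself, is the constant: your counting lemma $\mu_f(\epsilon)\le N(\epsilon/(2\lip(f)))$ combined with $N(r)\le C_0/r^m$ yields
\begin{equation*}
\Pers_k(f)\le \frac{m+\delta}{\delta}\,2^m\,C_0\,\lip(f)^m\,\amp(f)^\delta,
\end{equation*}
which for $m\ge 1$ is strictly weaker than the stated coefficient $\tfrac{m+2\delta}{\delta}C_0$, and you do not close that gap. As written, therefore, you have proved the proposition only up to a dimensional factor $2^m$; establishing the literal statement requires either the sharper bookkeeping of \cite{Cohen-Steiner2010} or simply citing it. That said, the discrepancy is immaterial for everything this paper does with the proposition: Lemma~\ref{LemmaPackingLemma} only asserts the existence of \emph{some} constant $c>0$ multiplying $\tfrac{n+2\delta}{\delta}$, and Theorem~\ref{ThmPackingRelation} uses only the $\delta^{-1}$ blow-up and the dependence on $\amp(f)^\delta$ and $\lip(f)^m$, all of which your weaker bound reproduces (note $\tfrac{m+\delta}{\delta}$ and $\tfrac{m+2\delta}{\delta}$ have the same $\delta\searrow 0$ asymptotics). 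So your proof would serve as a self-contained substitute for the citation in this paper, provided you restate the conclusion with your constant rather than the one quoted from \cite{Cohen-Steiner2010}.
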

\vs

For $M$ a compact Riemannian $n$-manifold constants $c,C$ exist, such that $c/r^n \leq N(r) \leq C/r^n$ for sufficiently small $r$. 
A compact metric space $M$ \emph{implies bounded degree-$k$ total persistence}, if there exists $C_M>0$, such that $\Pers_k(f)\leq C_M$ for every tame function $f:M\to\rr$ with $\lip(f)\leq 1$.

\section{Functional summaries}
Persistence diagrams themselves do not naturally lead to statistical goals \cite{Mileyko2011}. 
Instead, functional summaries of persistence diagrams such as the well-known persistence landscapes~\cite{Bubenik2015} have been proposed for their statistical analysis. 
Moreover, in recent years a multitude of different functional summaries have been developed across the literature~\cite{Berry2018, biscio2019accumulated, chazal2014stochastic, chen2015statistical}.

As a more general framework, in this appendix we consider functional summaries as introduced in \cite{Berry2018}. 
Different types of functional summaries accompanied by corresponding limit theorems then lead to the proofs of Propositions \ref{PropExpectationNumberMaxDeath} and~\ref{PropExpectationDegreeQPersistenceLength}.

\subsection{Additive functional summaries}\label{SecAdditiveFunctionalSummaries}
Let $T$ be a compact metric space and $\mathscr{F}(T)$ a collection of functions, $f:T\to \rr$. 
A functional summary $\mathcal{F}$ is a map $\mathcal{F}:\mathscr{D}\to\mathscr{F}(T)$. 
We call a functional summary $\mathcal{F}$ \emph{additive}, if for any two persistence diagrams $D,E\in  \mathscr{D}$ with $D+E$ defined as the union of multisets the equation $\mathcal{F}(D+E) = \mathcal{F}(D)+\mathcal{F}(E)$ is fulfilled.%
\footnote{In~\cite{chazal2018density, divol2019choice} additive functional summaries have been introduced similarly as so-called linear representations of persistence diagrams.} 
We denote the set of additive functional summaries by $\mathscr{A}(T)$.

A functional summary $\mathcal{F}$ is \emph{uniformly bounded}, if a constant $U<\infty$ exists, such that
\begin{equation*}
\sup_{f\in \im (\mathcal{F})} \sup_{s\in T} |f(s)| \leq U.
\end{equation*}
The following proposition on the pointwise convergence of uniformly bounded functional summaries has been given in the literature \cite{Berry2018}.
\vs

\begin{prop}[Pointwise convergence of functional summaries \cite{Berry2018}]\label{PropPointwiseConvergence}
Let $\mathcal{F}$ be a uniformly bounded functional summary and $D_i\in \mathscr{D}$ for $i\in\nn$, sampled i.i.d. from a probability space $(\mathscr{D}, \mathcal{B}(\mathscr{D}), \mathbb{P}_{\mathscr{D}})$. Set $f_i:=\mathcal{F}(D_i)$. If $\im(\mathcal{F})$ is equicontinuous, then a.s. for $m\to \infty$
\begin{equation*}
\sup_{s\in T} \left|\frac{1}{m}\sum_{i=1}^m f_i(s) - \mathbb{E}[\mathcal{F}(D)(s)]\right| \to 0.
\end{equation*}
\end{prop}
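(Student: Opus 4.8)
The plan is to establish a pointwise strong law at a countable collection of points and then promote it to uniform convergence using equicontinuity together with the compactness of $T$, in the spirit of an Arzel\`a--Ascoli argument. Throughout, write $\bar{f}_m(s):=\frac{1}{m}\sum_{i=1}^m f_i(s)$ and $\bar{f}(s):=\mathbb{E}[\mathcal{F}(D)(s)]$. Uniform boundedness enters twice: it guarantees that each evaluation $f_i(s)=\mathcal{F}(D_i)(s)$ is integrable, since $|f_i(s)|\leq U$, so the classical strong law applies, and it ensures that $\bar{f}$ is well defined and finite.

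First I would fix $s\in T$ and observe that the $f_i(s)$ are i.i.d.\ (because $\mathcal{F}$ is a fixed measurable map and the $D_i$ are i.i.d.) and bounded, hence integrable. By Kolmogorov's strong law of large numbers, $\bar{f}_m(s)\to\bar{f}(s)$ almost surely. The difficulty is that this almost-sure event depends on $s$, and the supremum in the statement ranges over the uncountably many $s\in T$; the substance of the proof is to make a single null set serve all $s$ simultaneously.

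Next I would exploit equicontinuity to reduce the uncountable supremum to finitely many points. Given $\epsilon>0$, choose $\delta>0$ so that $d(s,s')<\delta$ implies $|f(s)-f(s')|<\epsilon$ for every $f\in\im(\mathcal{F})$. Averaging preserves this modulus, so $|\bar{f}_m(s)-\bar{f}_m(s')|\leq\epsilon$ whenever $d(s,s')<\delta$, uniformly in $m$; and by Jensen's inequality $|\bar{f}(s)-\bar{f}(s')|\leq\mathbb{E}|\mathcal{F}(D)(s)-\mathcal{F}(D)(s')|\leq\epsilon$, since $\mathcal{F}(D)\in\im(\mathcal{F})$ almost surely. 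By compactness I would cover $T$ by finitely many balls $B(s_1,\delta),\dots,B(s_N,\delta)$. The pointwise strong law at the finitely many centers $s_1,\dots,s_N$ holds on a common almost-sure event (a finite intersection), and on it, for $m$ large, $|\bar{f}_m(s_j)-\bar{f}(s_j)|<\epsilon$ for all $j$. For arbitrary $s\in T$, picking $s_j$ with $d(s,s_j)<\delta$ and splitting
\[
|\bar{f}_m(s)-\bar{f}(s)|\leq|\bar{f}_m(s)-\bar{f}_m(s_j)|+|\bar{f}_m(s_j)-\bar{f}(s_j)|+|\bar{f}(s_j)-\bar{f}(s)|<3\epsilon
\]
yields $\sup_{s\in T}|\bar{f}_m(s)-\bar{f}(s)|\leq 3\epsilon$ for all large $m$.

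The main obstacle is exactly this uniformity of the exceptional null set over the uncountable index $s$, and I would resolve it by running the previous paragraph along $\epsilon=1/l$, $l\in\nn$. Each $l$ contributes a finite net, so only countably many net points arise in total; the strong law holds simultaneously at all of them on one almost-sure event $E$ (a countable intersection of almost-sure events). On $E$, for every $l$ one has $\limsup_{m}\sup_{s\in T}|\bar{f}_m(s)-\bar{f}(s)|\leq 3/l$, and letting $l\to\infty$ forces this $\limsup$ to vanish. Since each $\bar{f}_m$ and $\bar{f}$ are continuous and $T$ is separable, the supremum is measurable, so this is precisely the claimed almost-sure uniform convergence.
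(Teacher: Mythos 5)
The paper does not actually prove this proposition; it is imported verbatim from \cite{Berry2018}, so there is no in-paper argument to compare against. Your proof is correct and is the standard one: a pointwise strong law at the centers of a finite $\delta$-net, equicontinuity to control the oscillation of both $\bar{f}_m$ and $\bar{f}$ between net points, and a countable intersection over $\epsilon=1/l$ to produce a single null set, with separability of compact $T$ giving measurability of the supremum. The only step you pass over silently is the upgrade from (pointwise) equicontinuity of $\im(\mathcal{F})$ to the uniform modulus ``$d(s,s')<\delta$ implies $|f(s)-f(s')|<\epsilon$ for all $f$''; this is valid because $T$ is compact, but it deserves a sentence.
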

\vs

In the framework of functional summaries persistence diagram (expectation) measures naturally show up.
\vs

\begin{prop}
Let $\mathcal{A}\in \mathscr{A}(T)$ be an additive functional summary, $s\in T$ and $A\in \mathcal{B}^n_b$. Let $Y_\omega(A)\subset\Delta$ be the multiset of atoms of $\rho_\omega(A)$. In the evaluation of $\mathcal{A}$ the persistence diagram measure $\rho$ appears,
\begin{equation*}
\mathcal{A}(Y_\omega(A))(s) = \sum_{x\in Y_\omega(A)} \mathcal{A}(\{x\})(s) = \int_{\Delta} \mathcal{A}(\{x\})(s) \, \rho_\omega(A)(\dd x).
\end{equation*}
\end{prop}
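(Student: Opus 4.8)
The plan is to establish the two equalities separately, each following from a single defining property. For the first equality I would use the additivity of $\mathcal{A}$. Because the underlying point clouds $X_{\xi_\omega}(A)$ are finite, the persistence diagram $Y_\omega(A)$ is a finite multiset; write it, counted with multiplicity, as $Y_\omega(A) = \{x_1,\dots,x_m\}$. Regarding this multiset as the iterated multiset union $\{x_1\}+\cdots+\{x_m\}$ of singleton diagrams and applying the additivity relation $\mathcal{A}(D+E)=\mathcal{A}(D)+\mathcal{A}(E)$ a total of $m-1$ times yields $\mathcal{A}(Y_\omega(A)) = \sum_{i=1}^m \mathcal{A}(\{x_i\})$ as an identity in $\mathscr{F}(T)$. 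Evaluating both sides at the fixed point $s\in T$ then gives the first claimed identity $\mathcal{A}(Y_\omega(A))(s) = \sum_{x\in Y_\omega(A)} \mathcal{A}(\{x\})(s)$.

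For the second equality I would appeal directly to the definition of the persistence diagram measure, namely $\rho_\omega(A) = \sum_{x\in Y_\omega(A)} \delta_x$, a sum of Dirac masses over the atoms counted with multiplicity. The integral of the fixed function $x\mapsto \mathcal{A}(\{x\})(s)$ against such a counting measure is, by the definition of integration against a sum of point masses, the corresponding sum of evaluations, so $\int_\Delta \mathcal{A}(\{x\})(s)\,\rho_\omega(A)(\dd x) = \sum_{x\in Y_\omega(A)} \mathcal{A}(\{x\})(s)$, which closes the chain of equalities.

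There is no substantial obstacle here; the proof is essentially an unwinding of definitions. The single point requiring care is the bookkeeping of multiplicities, ensuring that a persistence pair occurring $k$ times in $Y_\omega(A)$ contributes exactly $k$ copies of $\mathcal{A}(\{x\})(s)$ in both the additive decomposition and the integral against $\rho_\omega(A)$. For completeness one should also note that $x\mapsto \mathcal{A}(\{x\})(s)$ is Borel measurable, so that the integral is well posed; for a fixed sample $\omega$ this expression reduces to a finite sum and measurability is automatic.
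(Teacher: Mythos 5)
Your proof is correct and matches the paper's treatment: the paper states this proposition without proof, regarding it as an immediate unwinding of the definitions of additivity and of $\rho_\omega(A)$ as a finite sum of Dirac masses, which is precisely what you carry out (including the appropriate care with multiplicities and the trivial measurability of a finite sum).
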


\begin{proof}
The statement is clear.
\end{proof}
\vs

\begin{prop}\label{PropExpectationValueFuncSummary}
Assume that the persistence diagram expectation measure $\mathfrak{p}$ exists. Then, for any functional summary $\mathcal{F}\in \mathscr{F}(T)$, $s\in T$ and $A\in\mathcal{B}^n_b$, we have 
\begin{equation*}
\mathbb{E}\left[ \int_{\Delta} \mathcal{F}(\{x\})(s) \, \rho_\omega(A)(\dd x)\right] = \int_{\Delta} \mathcal{F}(\{x\})(s)\,  \mathfrak{p}(A)(\dd x).
\end{equation*}
\end{prop}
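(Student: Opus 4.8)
The identity is Campbell's formula relating the first moment (intensity) measure of a point process to integration of test functions, so my plan is to prove it by the standard measure-theoretic induction, starting from indicators. Abbreviate $g(x):=\mathcal{F}(\{x\})(s)$, the fixed function on $\Delta$ obtained by evaluating the functional summary on single-point diagrams at the marker $s\in T$. By the very definition of the persistence diagram expectation measure, $\mathfrak{p}(A)=\mathbb{E}[\rho_\omega(A)]$, which unwinds to $\mathfrak{p}(A)(B)=\mathbb{E}[\rho_\omega(A)(B)]$ for all $B\in\mathcal{B}(\Delta)$. Since $\int_\Delta \mathbbm{1}_B(x)\,\rho_\omega(A)(\dd x)=\rho_\omega(A)(B)$ and $\int_\Delta \mathbbm{1}_B(x)\,\mathfrak{p}(A)(\dd x)=\mathfrak{p}(A)(B)$, the claimed equality holds verbatim for every $g=\mathbbm{1}_B$.

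I would then extend to general $g$ in the usual three stages. By linearity of the pathwise integral against $\rho_\omega(A)$ and of the expectation, the identity passes to nonnegative simple functions $g=\sum_i a_i\,\mathbbm{1}_{B_i}$. For an arbitrary nonnegative Borel function $g$, choose simple functions $g_m\nearrow g$ and apply monotone convergence first to the inner integral (pathwise in $\omega$, against the a.s.\ finite measure $\rho_\omega(A)$) and then to the outer expectation; simultaneously monotone convergence gives $\int_\Delta g_m\,\dd\mathfrak{p}(A)\to\int_\Delta g\,\dd\mathfrak{p}(A)$. As the two sides agree for each $g_m$, they agree in the limit, establishing the identity in $[0,\infty]$ for all nonnegative $g$. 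Finally, for a signed functional summary write $g=g^+-g^-$ and subtract the two nonnegative identities; under the standing restriction to uniformly bounded $\mathcal{F}$ both integrals $\int_\Delta g^\pm\,\dd\mathfrak{p}(A)$ are bounded by $U\,\mathfrak{n}(A)$ and hence finite whenever $\mathfrak{p}(A)$ is boundedly finite (Proposition \ref{PropExpectationNumberMaxDeath}), so the subtraction is legitimate and yields the general case.

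The only genuinely delicate point, and thus the main obstacle, is measurability rather than any estimate. One must check that $x\mapsto\mathcal{F}(\{x\})(s)$ is a Borel function on $\Delta$, so that the inner integrals and the approximation by simple functions are well defined; this follows from continuity of $x\mapsto\{x\}$ into $\mathscr{D}$ together with the equicontinuity and uniform boundedness we have imposed on $\mathcal{F}$. One must also verify that $\omega\mapsto\int_\Delta g\,\rho_\omega(A)(\dd x)=\sum_{x\in Y_\omega(A)} g(x)$ is a bona fide random variable, which holds because $\rho_\cdot(A)$ is by construction a measurable map $\Omega\to\mathcal{N}(\Delta)$ and $\mu\mapsto\int_\Delta g\,\dd\mu$ is measurable on $\mathcal{N}(\Delta)$ for fixed Borel $g$. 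Once these measurability facts are in hand, the interchange of $\mathbb{E}$ and $\int_\Delta$ is justified exactly as above, and no further work is required beyond what the existence of $\mathfrak{p}(A)$ already provides.
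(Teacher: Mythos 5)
Your proof is correct and follows essentially the same route as the paper: both identify the statement as the Campbell/first-moment-measure identity, which the paper simply cites from the point-process literature after observing that $\rho_\omega(A)$ has uniformly bounded support, while you supply the standard monotone-class argument (indicators, simple functions, monotone convergence, positive and negative parts) in full. The only cosmetic difference is in how integrability for signed summaries is secured --- the paper via the bounded support of $\rho_\omega(A)$, you via uniform boundedness of $\mathcal{F}$ together with finiteness of $\mathfrak{n}(A)$ --- but these are interchangeable here.
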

\vs

\begin{proof}
This statement directly follows from the theory of point processes and their moment measures~\cite{daley2007}. 
Note that for any $A\in\mathcal{B}^n_b$ there exists a bounded $B\subset \overline{\Delta}$, such that $\supp(\rho_\omega(A))\subseteq B$ for any $\omega\in\Omega$.
Then 
\begin{equation*}
\int_{\Delta} \mathcal{F}(\{x\})(s) \, \rho_\omega(A)(\dd x) = \int_{\overline{\Delta}} \mathcal{F}(\{x\})(s) \chi_B(x) \, \rho_\omega(A)(\dd x)
\end{equation*}
with indicator function $\chi_B$, and $\mathcal{F}(\cdot)(s) \chi_B$ has bounded support.
\end{proof}

\subsection{Intensive functional summaries}\label{SecIntensiveFunctionalSummaries}
\begin{defn}
Let $\xi$ be a stationary and ergodic simple point process on $\rr^n$ and $\mathcal{F}:\mathscr{D}\to\mathscr{F}(T)$ a functional summary. 
We say that $\mathcal{F}$ is \emph{$\xi$-intensive}, if for any balanced convex averaging sequence $\{A_k\}$, $\epsilon>0$ and $k$ sufficiently large we a.s. have
\begin{equation*}
 \lim_{l\to \infty}|| \mathcal{F}(D_l) - \mathcal{F}(D_k)||_\infty < \epsilon,
\end{equation*}
where $||\cdot ||_\infty$ denotes the supremum norm and  $D_k:= \bigcup_{\ell=0}^{n-1} \dgm_\ell (X_{\xi_\omega} (A_k))$.
\end{defn}
\vs

\begin{lem}\label{LemmaAdditiveDivideVolumeIntensiveFuncSummary}
Let $\mathcal{A}\in \mathscr{A}(T)$ be an additive functional summary and $\xi$ a stationary and ergodic simple point process on $\rr^n$. 
Set $D_k:= \bigcup_{\ell=0}^{n-1} \dgm_\ell (X_{\xi_\omega}(A_k))$, where $\{A_k\}$ is a balanced convex averaging sequence. 
Then $\mathcal{A}(D_k)/\lambda_n(A_k)$ a.s. defines a $\xi$-intensive functional summary.
\end{lem}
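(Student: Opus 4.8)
The plan is to show that the candidate summary $\mathcal{G}_k := \mathcal{A}(D_k)/\lambda_n(A_k)$ converges a.s.\ in the supremum norm on $T$ as $k\to\infty$; the Cauchy condition in the definition of $\xi$-intensivity is then immediate, since if $\mathcal{G}_k\to g$ uniformly a.s.\ then $\lim_{l\to\infty}\|\mathcal{G}_l-\mathcal{G}_k\|_\infty=\|g-\mathcal{G}_k\|_\infty<\epsilon$ once $k$ is large. First I would use additivity of $\mathcal{A}$, which gives $\mathcal{A}(D_k)(s)=\sum_{x\in D_k}\mathcal{A}(\{x\})(s)=\int_\Delta \mathcal{A}(\{x\})(s)\,\rho_\omega(A_k)(\dd x)$, to write for each $s\in T$
\[
\mathcal{G}_k(s)=\int_\Delta \mathcal{A}(\{x\})(s)\,\nu_k(\dd x),\qquad \nu_k:=\frac{\rho_\omega(A_k)}{\lambda_n(A_k)}.
\]
Here $x\mapsto\mathcal{A}(\{x\})(s)$ is continuous (from equicontinuity of $\im(\mathcal{A})$) and uniformly bounded, so the task reduces to controlling the normalized persistence diagram measures $\nu_k$.

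Second, I would feed in the almost-sure strong law for persistent Betti numbers, Theorem~\ref{ThmStrongLawPersBetti}. For a fixed pair $0\le r\le s'<\infty$ the $\nu_k$-mass of the corner set $\{(b,d):b\le r,\ d>s'\}\subset\Delta$ equals $\lambda_n(A_k)^{-1}\sum_{\ell}\beta^{r,s'}_\ell(\mathcal{C}(X_k))$, whose summands each converge a.s.\ to $\hat\beta^{r,s'}_\ell$ by Theorem~\ref{ThmStrongLawPersBetti}. Finite differences of corner sets produce the masses of all rectangles contained in $\Delta$, and these form a convergence-determining family; together with the a.s.\ local finiteness of the limits this yields a.s.\ vague convergence $\nu_k\overset{v}{\longrightarrow}\nu$ to a deterministic Radon measure $\nu$ on $\Delta$, necessarily consistent with the expectation limit $\mathfrak{P}$ of Theorem~\ref{ThmExistenceLimitingRadonMeasure}.

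Third, I would pass the per-point summary through this limit, pointwise in $s$. Fixing a large compact box $K\subset\Delta$ and a fine grid on it, equicontinuity of $\im(\mathcal{A})$ lets me approximate $\mathcal{A}(\{\cdot\})(s)$ on $K$ by a step function whose integral against $\nu_k$ is a finite linear combination of corner-set masses, each convergent by the previous step, with uniformly small grid-approximation error. The contribution of $\Delta\setminus K$ must then be shown to be uniformly small in $k$, and this is the main obstacle: the supports of $\rho_\omega(A_k)$ are \emph{not} uniformly bounded (the maximum death radii grow as $A_k\uparrow\rr^n$), so vague convergence alone does not control a bounded but non-compactly-supported integrand near infinity. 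I would bound this tail a.s.\ uniformly in $k$ via the packing lemma (Lemma~\ref{LemmaPackingLemma}) together with finiteness of all moments, which control the normalized number of classes of large persistence or large death and hence $\nu_k(\Delta\setminus K)$. Combining the three estimates gives $\mathcal{G}_k(s)\to\int_\Delta\mathcal{A}(\{x\})(s)\,\nu(\dd x)$ a.s.\ for each $s$.

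Finally, I would upgrade pointwise to uniform convergence: the family $\{\mathcal{G}_k\}$ is equicontinuous (inherited from $\im(\mathcal{A})$) and $T$ is compact, so a.s.\ pointwise convergence forces a.s.\ uniform convergence, producing the limit $g$ and the Cauchy estimate defining $\xi$-intensivity. The delicate point throughout is the tail control of the third step; the ergodic inputs (Theorems~\ref{ThmExistenceLimitingRadonMeasure} and~\ref{ThmStrongLawPersBetti}) together with the packing bound are exactly what make the normalized measures behave like a single deterministic limit rather than a sample-dependent one.
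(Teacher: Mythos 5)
Your route is genuinely different from the paper's. The paper does not attempt to prove a.s.\ uniform convergence of $\mathcal{A}(D_k)/\lambda_n(A_k)$ via convergence of the normalized empirical diagram measures $\nu_k=\rho_\omega(A_k)/\lambda_n(A_k)$; instead it writes $\mathcal{A}(D_k)(s)/\lambda_n(A_k)=\bigl(n(X_k)/\lambda_n(A_k)\bigr)\cdot\bigl(\mathcal{A}(D_k)(s)/n(X_k)\bigr)$ and controls the two factors separately, using ergodicity in persistence (Lemma \ref{LemmaErgodicPersistence}) for the ratio $n(X_l)\lambda_n(A_k)/(n(X_k)\lambda_n(A_l))$, the bound $n(X_k)/\lambda_n(A_k)<c^*$ from Theorem \ref{ThmExistenceLimitingRadonMeasure}, uniform boundedness of $\mathcal{A}$, and an appeal to additivity for the convergence of the per-point average $\mathcal{A}(D_k)(s)/n(X_k)$. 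Your first, second and fourth steps are sound: the identification of corner-set masses with persistent Betti numbers, the passage from Theorem \ref{ThmStrongLawPersBetti} (over a countable dense family of corners) to a.s.\ vague convergence of $\nu_k$ to a deterministic Radon measure, and the upgrade from pointwise to uniform convergence via equicontinuity of $\{\mathcal{G}_k\}$ (which follows since $n(X_k)/\lambda_n(A_k)$ is bounded) are all correct and arguably make the logical structure more transparent than the paper's.

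The gap is exactly where you flagged it, but your proposed repair does not work. Lemma \ref{LemmaPackingLemma} cannot bound $\nu_k(\Delta\setminus K)$: its constant $c$ comes from Proposition \ref{PropPolynomialGrowth} applied to $M=\conv(X_k)$ and therefore scales like $\lambda_n(A_k)$, so after normalization the packing inequality only controls the \emph{total} count $n_\omega(A_k)/\lambda_n(A_k)$, not the count of diagram points with large death radius or large persistence. Concretely, a Chebyshev bound of the form $\#\{x:\pers(x)>P\}\le \Pers_{n+\delta}(d_X)/P^{n+\delta}\le c(A_k)\,d_{\max,\omega}(A_k)^{\delta}/(\delta^{-1}(n+2\delta)^{-1}P^{n+\delta})$ has a right-hand side that, divided by $\lambda_n(A_k)$, does not tend to zero because $d_{\max,\omega}(A_k)\to\infty$; and no persistence-based bound at all controls points with $b\approx d$ both large. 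What you actually need is tightness of $\{\nu_k\}$, equivalently that no mass escapes to infinity, i.e.\ $\nu_k(\Delta)\to\nu(\Delta)$ in addition to vague convergence; the convergence of the total mass $\nu_k(\Delta)=n(X_k)/\lambda_n(A_k)$ follows from the Delaunay-complex counting in the proof of Lemma \ref{LemmaErgodicPersistence}, but identifying the limit with $\nu(\Delta)$ is precisely the missing estimate, and neither the packing lemma nor the finiteness of moments supplies it. (To be fair, the paper's own assertion that ``by additivity $\mathcal{A}(D_k)(s)/n(X_k)$ converges for every $s$'' conceals essentially the same difficulty; but since your argument makes the tail bound the explicit load-bearing step and then cites a tool that cannot carry it, the proof as proposed is incomplete.)
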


\begin{proof}
Set $X_k:=X_{\xi_\omega} (A_k)$. 
Let $s\in T$ and $k,l\in \nn$,
\begin{align}
&\left| \frac{ \mathcal{A}(D_l)(s)}{\lambda_n(A_l)} - \frac{\mathcal{A}(D_k)(s)}{\lambda_n(A_k)}\right|\nonumber\\
&\quad =\, \left| \frac{\mathcal{A}(D_l)(s)}{n(X_l)} \left(\frac{n(X_l)}{\lambda_n(A_l)} -  \frac{n(X_k)}{\lambda_n(A_k)}\right)  - \frac{n(X_k)}{\lambda_n(A_k)}\left( \frac{\mathcal{A}(D_k)(s)}{n(X_k)}-\frac{\mathcal{A}(D_l)(s)}{n(X_l)}\right)\right|\nonumber\\
&\quad \leq \,  \frac{|\mathcal{A}(D_l)(s)|}{n(X_l)} \left| \frac{n(X_l)}{\lambda_n(A_l)} - \frac{n(X_k)}{\lambda_n(A_k)} \right| + \frac{n(X_k)}{\lambda_n(A_k)}\left| \frac{\mathcal{A}(D_k)(s)}{n(X_k)}-\frac{\mathcal{A}(D_l)(s)}{n(X_l)}\right|.\label{EqEstimationDeviations}
\end{align}
By additivity of $\mathcal{A}$ we obtain that $\mathcal{A}(D_k)(s)/n(X_k)$ converges to some constant \mbox{$a(s)<\infty$} for $k\to\infty$ and every~$s$. 
To this end, for sufficiently large $k,l$:
\begin{equation*}
\left| \frac{\mathcal{A}(D_k)(s)}{n(X_k)} - \frac{\mathcal{A}(D_l)(s)}{n(X_l)}\right| < \epsilon.
\end{equation*}
The point process $\xi$ is ergodic in persistence by \Cref{LemmaErgodicPersistence}, such that a.s. for $\epsilon >0$ and sufficiently large~$k,l$:
\begin{equation*}
\left|\frac{n(X_l)}{\lambda_n(A_l)} - \frac{n(X_k)}{\lambda_n(A_k)}\right| < \epsilon.
\end{equation*}
Furthermore, by \Cref{ThmExistenceLimitingRadonMeasure} a.s. $c^*>0$ exists, such that for all $k$: \mbox{$n(X_k)/\lambda_n(A_k) < c^*$}. 
Insertion of all this into \Cref{EqEstimationDeviations} a.s. yields for sufficiently large $k,l$,
\begin{equation*}
\left| \frac{ \mathcal{A}(D_l)(s)}{\lambda_n(A_l)} - \frac{\mathcal{A}(D_k)(s)}{\lambda_n(A_k)}\right| < (a(s)+c^*) \epsilon.
\end{equation*}
Indeed, $\mathcal{A}(D_k)/\lambda_n(A_k)$ a.s. defines a $\xi$-intensive functional summary.
\end{proof}
\vs

\begin{cor}\label{CorAdditiveDivideNumberIntensiveFuncSummary}
Given the assumptions of \Cref{LemmaAdditiveDivideVolumeIntensiveFuncSummary}, $\mathcal{A}(D_k) / n(D_k)$ is a $\xi$-intensive functional summary. 
This is a direct result of
\begin{equation*}
\frac{\mathcal{A}(D_k)}{n(D_k)} = \frac{\lambda_n(A_k)}{n(D_k)} \frac{\mathcal{A}(D_k)}{\lambda_n(A_k)},
\end{equation*}
both factors being $\xi$-intensive functional summaries.
\end{cor}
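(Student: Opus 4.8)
The plan is to read the displayed identity as a factorization of the candidate summary into two pieces we already control, and then to argue that the relevant product of $\xi$-intensive summaries is again $\xi$-intensive. Writing $F_k := \lambda_n(A_k)/n(D_k)$ (a constant function on $T$ for each $k$) and $G_k := \mathcal{A}(D_k)/\lambda_n(A_k)$, the identity $\mathcal{A}(D_k)/n(D_k) = F_k G_k$ is immediate, since $n(D_k) = \# D_k = n(X_k)$ and the volume factor cancels.

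First I would dispose of $G_k$: by Lemma~\ref{LemmaAdditiveDivideVolumeIntensiveFuncSummary} it is $\xi$-intensive, and uniform boundedness of $\mathcal{A}$ gives $\|\mathcal{A}(D_k)\|_\infty < U$, hence $\sup_{k}\|G_k\|_\infty =: M_G < \infty$ (for $k$ large enough that $\lambda_n(A_k)>0$). Next I would treat the scalar factor $F_k$. Since $\xi$ is stationary and ergodic, Lemma~\ref{LemmaErgodicPersistence} gives ergodicity in persistence, i.e. a.s. $\frac{n(X_l)}{n(X_k)}\frac{\lambda_n(A_k)}{\lambda_n(A_l)}\to 1$; as $n(X_k)/\lambda_n(A_k)$ is bounded above (Theorem~\ref{ThmExistenceLimitingRadonMeasure}), this relative Cauchy condition forces $n(X_k)/\lambda_n(A_k)$ to converge a.s. Its limit is strictly positive, because $n(X_k)\geq \# X_k$ a.s. and $\# X_k/\lambda_n(A_k)\to m>0$ by Proposition~\ref{PropErgodicityMeanDensity} together with the nonvanishing first moment of $\xi$. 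Consequently $F_k=\lambda_n(A_k)/n(X_k)$ converges a.s. to a finite positive limit, so it is a $\xi$-intensive (constant) functional summary with $\sup_k|F_k| =: M_F < \infty$.

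It then remains to combine the two factors. I would use the telescoping decomposition
\begin{equation*}
F_l G_l - F_k G_k = F_l\,(G_l - G_k) + (F_l - F_k)\,G_k,
\end{equation*}
whence the triangle inequality for $\|\cdot\|_\infty$ yields
\begin{equation*}
\|F_l G_l - F_k G_k\|_\infty \leq M_F\,\|G_l - G_k\|_\infty + M_G\,|F_l - F_k|.
\end{equation*}
Given $\epsilon>0$, the $\xi$-intensiveness of $G_k$ makes the first term below $\epsilon$ for $k,l$ large, while a.s. convergence of $F_k$ makes the second term below $\epsilon$ for $k,l$ large; intersecting the two full-measure events and letting $l\to\infty$ gives $\lim_{l\to\infty}\|F_l G_l - F_k G_k\|_\infty < (M_F+M_G)\,\epsilon$ for $k$ sufficiently large. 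Since $\epsilon$ was arbitrary, $\mathcal{A}(D_k)/n(D_k)=F_k G_k$ is $\xi$-intensive.

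The step I expect to require the most care is securing the uniform bounds $M_F,M_G$, and in particular the strict positivity of the limit of $n(X_k)/\lambda_n(A_k)$: without it $F_k$ could blow up and the product estimate would break down. That positivity is precisely what ergodicity in persistence and the nonzero first moment of $\xi$ provide, so the whole argument hinges on invoking Lemma~\ref{LemmaErgodicPersistence} and Proposition~\ref{PropErgodicityMeanDensity} to pin down the limiting persistence-pair density as finite and nonzero.
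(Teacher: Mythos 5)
Your proof is correct and follows essentially the same route as the paper: the paper's entire argument is the displayed factorization $\mathcal{A}(D_k)/n(D_k) = \bigl(\lambda_n(A_k)/n(D_k)\bigr)\cdot\bigl(\mathcal{A}(D_k)/\lambda_n(A_k)\bigr)$ with both factors asserted to be $\xi$-intensive, and you simply supply the details it leaves implicit (a.s. convergence of $\lambda_n(A_k)/n(D_k)$ to a finite positive limit via ergodicity in persistence, Theorem~\ref{ThmExistenceLimitingRadonMeasure} and Proposition~\ref{PropErgodicityMeanDensity}, plus the telescoping product estimate). Your emphasis on the strict positivity of $\lim_k n(X_k)/\lambda_n(A_k)$ is a welcome clarification of a point the paper glosses over.
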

\vs

Asymptotically, for $\xi$-intensive functional summaries the ensemble-average can be replaced by the large-volume limit.
\vs

\begin{prop}\label{PropIntensiveFuncSummariesVolEnsembleAvg}
Let $\xi$ be a stationary and ergodic simple point process on $\rr^n$. 
Let $\mathcal{F}$ be a $\xi$-intensive functional summary, which is uniformly bounded and equicontinuous, and let $\{A_k\}$ be a balanced convex averaging sequence. 
Choose i.i.d. samples $\omega_i\in \Omega$, $i\in \nn$, according to the probability distribution $\mathbb{P}$ and define $D_{k,i}:=\bigcup_{\ell=0}^{n-1} \dgm_\ell (X_{\xi_{\omega_i}}(A_k))$. 
Then for any $j\in\nn$, $\epsilon>0$ and sufficiently large $k$ we a.s. find
\begin{equation*}
\sup_{s\in T} \left| \lim_{l\to \infty} \mathcal{F}(D_{l,j})(s) - \lim_{m\to\infty} \frac{1}{m} \sum_{i=1}^m \mathcal{F}(D_{k,i})(s)\right| < \epsilon.
\end{equation*}
\end{prop}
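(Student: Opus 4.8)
The plan is to show that both iterated limits in the statement are deterministic and coincide with a common limiting functional summary $\mathcal{F}^\infty$. Write $D_k := \bigcup_{\ell=0}^{n-1}\dgm_\ell(X_{\xi_\omega}(A_k))$ for the random diagram at window $A_k$, so that $D_{k,1},D_{k,2},\dots$ are i.i.d.\ copies of $D_k$. First I would dispose of the inner ensemble average: since $\mathcal{F}$ is uniformly bounded and equicontinuous, Proposition~\ref{PropPointwiseConvergence} applied to $(D_{k,i})_i$ at the fixed window $A_k$ gives, a.s.\ and uniformly in $s\in T$,
\begin{equation*}
\lim_{m\to\infty}\frac1m\sum_{i=1}^m \mathcal{F}(D_{k,i})(s) = \mathbb{E}[\mathcal{F}(D_k)(s)].
\end{equation*}
Thus the right-hand term is the deterministic quantity $\mathbb{E}[\mathcal{F}(D_k)]$, and the proposition reduces to bounding $\sup_{s}\big|\lim_{l}\mathcal{F}(D_{l,j})(s) - \mathbb{E}[\mathcal{F}(D_k)(s)]\big|$ for large $k$.

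Next I would treat the left-hand term. Because $\mathcal{F}$ is $\xi$-intensive, a.s.\ the sequence $(\mathcal{F}(D_{l,\omega}))_l$ is Cauchy in the supremum norm and hence converges to some $\mathcal{F}^\infty_\omega\in\mathscr{F}(T)$; in particular $\lim_{l}\mathcal{F}(D_{l,j})(s) = \mathcal{F}^\infty_{\omega_j}(s)$, and the drawn sample $\omega_j$ lies a.s.\ in the full-probability event on which the limit exists. The crux is to show that $\mathcal{F}^\infty_\omega$ is a.s.\ a single deterministic function $\mathcal{F}^\infty$, independent of $\omega$, which I would extract from ergodicity of $\xi$. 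Persistence diagrams are invariant under a global translation of the point cloud, so $\dgm_\ell(X_{\theta_x\xi_\omega}(A_l)) = \dgm_\ell(X_{\xi_\omega}(A_l+x))$; since $\{A_l+x\}$ is again a convex averaging sequence whose boundary layer $(A_l+x)\,\triangle\,A_l$ is negligible relative to $\lambda_n(A_l)$ by property~(iv), the limit $\mathcal{F}^\infty_\omega$ is unchanged under $\omega\mapsto\theta_x\omega$. Being a translation-invariant measurable functional, $\mathcal{F}^\infty_\omega(s)$ is then a.s.\ constant for each $s$ in a countable dense subset of the separable space $T$ by ergodicity, and equicontinuity of $\im(\mathcal{F})$ promotes this to a.s.\ constancy as an element of $\mathscr{F}(T)$.

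Finally I would connect $\mathcal{F}^\infty$ with the ensemble mean. From $\xi$-intensiveness together with determinism, a.s.\ $g_k(\omega):=\sup_{s}|\mathcal{F}(D_{k,\omega})(s)-\mathcal{F}^\infty(s)|\to 0$ as $k\to\infty$, while $g_k\le 2U$ by uniform boundedness; dominated convergence then yields $\mathbb{E}[g_k]\to 0$, whence
\begin{equation*}
\sup_{s}\big|\mathbb{E}[\mathcal{F}(D_k)(s)]-\mathcal{F}^\infty(s)\big|\le \mathbb{E}[g_k]\longrightarrow 0.
\end{equation*}
Given $\epsilon>0$ I would fix $k$ large enough that this supremum is below $\epsilon$; combining with the two a.s.\ identifications above gives, on a full-probability event,
\begin{equation*}
\sup_{s}\Big|\lim_{l\to\infty}\mathcal{F}(D_{l,j})(s) - \lim_{m\to\infty}\frac1m\sum_{i=1}^m\mathcal{F}(D_{k,i})(s)\Big| = \sup_{s}\big|\mathcal{F}^\infty(s)-\mathbb{E}[\mathcal{F}(D_k)(s)]\big| < \epsilon .
\end{equation*}
The hard part will be the determinism step: rigorously justifying the translation invariance of $\mathcal{F}^\infty_\omega$ — that is, controlling the boundary contribution to $\mathcal{F}$ under the shift $A_l\mapsto A_l+x$ for a general intensive $\mathcal{F}$ rather than only for additive summaries divided by volume — together with the measurability and the passage from a countable dense set of parameters $s$ to all of $T$.
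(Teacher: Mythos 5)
Your decomposition is the same as the paper's: insert $\mathbb{E}[\mathcal{F}(D_k)]$ as a pivot, control the ensemble average by Proposition \ref{PropPointwiseConvergence} (valid here since the paper restricts to uniformly bounded, equicontinuous summaries), and then show that the a.s.\ spatial limit $\lim_l\mathcal{F}(D_{l,j})$ is close to $\mathbb{E}[\mathcal{F}(D_k)]$ for large $k$. Your handling of that second half, however, diverges from the paper's. The paper disposes of it by citing Proposition \ref{PropExpectationValueFuncSummary} and Theorem \ref{ThmExistenceLimitingRadonMeasure}, i.e.\ it leans on the fact that the intensive summaries actually in play are additive summaries divided by volume (or ratios thereof), for which the vague convergence $\mathfrak{p}(A_k)/\lambda_n(A_k)\to\mathfrak{P}$ identifies both the limit of the expectations and the a.s.\ limit with the same deterministic integral against $\mathfrak{P}$. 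You instead try to prove determinism of $\mathcal{F}^\infty_\omega$ for an \emph{arbitrary} $\xi$-intensive $\mathcal{F}$ via ergodicity, and then pass to expectations by dominated convergence. The dominated-convergence half of this is clean and arguably tidier than the paper's citation chain.

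The gap is the one you flag yourself, and it is genuine rather than cosmetic: the definition of $\xi$-intensiveness only asserts stabilization of $\mathcal{F}(D_k)$ along each convex averaging sequence separately; it does not assert that the limits along $\{A_l\}$ and $\{A_l+x\}$ agree, and the two sequences cannot in general be interleaved into a single nested convex averaging sequence (e.g.\ $A_l=[0,l]^n$ and $A_l+x$ are never nested for $x\neq 0$). The ``negligible boundary layer'' heuristic converts a small symmetric difference $(A_l+x)\,\triangle\,A_l$ into a small change of $\mathcal{F}$ only when $\mathcal{F}$ is an additive summary normalized by volume, which is exactly the structural input the paper uses and your argument tries to avoid. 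So translation invariance of $\omega\mapsto\mathcal{F}^\infty_\omega$, and hence the ergodicity step, is not established for general intensive $\mathcal{F}$. To close the proof you should either (a) restrict, as the paper implicitly does, to summaries of the form $\mathcal{A}(D_k)/\lambda_n(A_k)$, $\mathcal{A}(D_k)/n(D_k)$ and their ratios, and identify $\mathcal{F}^\infty$ through Theorem \ref{ThmExistenceLimitingRadonMeasure} and Proposition \ref{PropExpectationValueFuncSummary}, or (b) strengthen the definition of $\xi$-intensive to require that the limit be independent of the choice of convex averaging sequence, after which your ergodicity argument (on a countable dense subset of $T$, upgraded by equicontinuity) goes through.
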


\begin{proof}
Let $s\in T$, $\epsilon>0$, $j\in\nn$, $D_k:=\bigcup_{\ell=0}^{n-1}\dgm_\ell(X_\xi(A_k))$. 
Since $\mathcal{F}$ is $\xi$-intensive, further exploiting \Cref{PropExpectationValueFuncSummary} and \Cref{ThmExistenceLimitingRadonMeasure}, we a.s. obtain for sufficiently large~$k$
\begin{equation*}
\left| \lim_{l\to\infty} \mathcal{F}(D_{l,j})(s) - \mathbb{E}[\mathcal{F}(D_{k})(s)]\right| < \frac{\epsilon}{2}.
\end{equation*}
Similarly, \Cref{PropPointwiseConvergence} a.s. yields for sufficiently large $k$
\begin{equation*}
\left| \mathbb{E}[\mathcal{F}(D_{k})(s)] - \lim_{m\to \infty}\frac{1}{m} \sum_{i=1}^m \mathcal{F}(D_{k,i})(s) \right| < \frac{\epsilon}{2}.
\end{equation*}
Putting things together, we a.s. find
\begin{align*}
&\left| \lim_{l\to \infty} \mathcal{F}(D_{l,j})(s) - \lim_{m\to\infty} \frac{1}{m} \sum_{i=1}^m \mathcal{F}(D_{k,i})(s)\right| \nonumber\\
&\quad \leq \, \left| \lim_{l\to\infty} \mathcal{F}(D_{l,j})(s) - \mathbb{E}[\mathcal{F}(D_{k})(s)]\right| + \left| \mathbb{E}[\mathcal{F}(D_{k})(s)] - \lim_{m\to \infty}\frac{1}{m} \sum_{i=1}^m \mathcal{F}(D_{k,i})(s) \right| < \epsilon.
\end{align*}
\end{proof}

\subsection{Proofs of Propositions \ref{PropExpectationNumberMaxDeath} and \ref{PropExpectationDegreeQPersistenceLength}}\label{AppendixProofsPropsGeometricQuantities}
Given the preceding statements on functional summaries, we can now prove Propositions \ref{PropExpectationNumberMaxDeath} and \ref{PropExpectationDegreeQPersistenceLength}.

\begin{proof}[Proof of \Cref{PropExpectationNumberMaxDeath}]
The statement directly follows from \Cref{PropExpectationValueFuncSummary}.
\end{proof}

\begin{proof}[Proof of \Cref{PropExpectationDegreeQPersistenceLength}]
The final statement on finiteness of $\mathfrak{d}_{\max}(A)$ and $\mathfrak{l}_q(A)$ is clear for any $A\in\mathcal{B}^n_b$.
Further, for any $\omega\in\Omega$ and $A\in\mathcal{B}^n_b$ we define
\begin{equation*}
L_{q,\omega}(A) := \int_{\Delta} \pers(x)^q \rho_\omega(A)(\dd x).
\end{equation*}
$L_q$ is an additive functional summary. 
Under the assumptions of the proposition \Cref{CorAdditiveDivideNumberIntensiveFuncSummary} holds, such that $L_{q,\omega}(A)/n_\omega(A)$ makes up a $\xi$-intensive functional summary. 
Let $\{A_k\}$ be a balanced convex averaging sequence and $\omega_i\in\Omega$ i.i.d. for $i\in\nn$. 
For any $j\in\nn$ we a.s. obtain by means of \Cref{PropPointwiseConvergence}, \Cref{CorAdditiveDivideNumberIntensiveFuncSummary} and \Cref{PropIntensiveFuncSummariesVolEnsembleAvg} for $\epsilon>0$ and sufficiently large $k$:
\begin{equation*}
\left|\mathbb{E}[l_{q,\omega}(A_k)^q] -\lim_{k'\to\infty} l_{q,\omega_j}(A_{k'})^q \right| = \left|\lim_{m\to\infty}\frac{1}{m} \sum_{i=1}^m l_{q,\omega_i}(A_k)^q - \lim_{k'\to\infty} l_{q,\omega_j}(A_{k'})^q\right|<\epsilon.
\end{equation*}
Now,
\begin{equation*}
l_{q,\omega_j}(A_{k'})^q = \frac{L_{q,\omega_j}(A_{k'})}{n_{\omega_j}(A_{k'})} = \frac{\lambda_n(A_{k'})}{n_{\omega_j}(A_{k'})} \frac{L_{q,\omega_j}(A_{k'})}{\lambda_n(A_{k'})}.
\end{equation*}
Each of the two factors on the right-hand side is a $\xi$-intensive functional summary according to \Cref{LemmaAdditiveDivideVolumeIntensiveFuncSummary}. 
We a.s. get for sufficiently large $k$
\begin{equation*}
\frac{\epsilon}{2}>\left|\mathbb{E}[l_{q,\omega}(A_k)^q]^{1/q} - \left(\lim_{k'\to\infty} \frac{\lambda_n(A_{k'})}{n_{\omega_j}(A_{k'})} \lim_{k'\to\infty} \frac{L_{q,\omega_j}(A_{k'})}{\lambda_n(A_{k'})}\right)^{1/q}\right|.
\end{equation*}
Thus, a.s. for sufficiently large $k$:
\begin{subequations}
\begin{align*}
\epsilon>&\,   \left|\mathbb{E}[l_{q,\omega}(A_k)^q]^{1/q}- \left(\frac{\lambda_n(A_k)}{\lim_{m\to\infty}\frac{1}{m} \sum_{i=1}^m n_{\omega_i}(A_k)}\lim_{m\to\infty}\frac{1}{m} \sum_{i=1}^m \frac{L_{q,\omega_i}(A_k)}{\lambda_n(A_k)}\right)^{1/q}\right| + \frac{\epsilon}{2}\\
>&\, \left|\mathbb{E}[l_{q,\omega}(A_k)^q]^{1/q} - \left(\frac{\lambda_n(A_k)}{\mathbb{E}[n_{\omega_i}(A_k)]} \mathbb{E}\left[ \frac{L_{q,\omega_i}(A_k)}{\lambda_n(A_k)}\right]\right)^{1/q}\right|\\
=&\, \left|\mathbb{E}[l_{q,\omega}(A_k)^q]^{1/q} - \mathfrak{l}_q(A_k)\right|.
\end{align*}
\end{subequations}
Finally,
\begin{subequations}
\begin{align*}
\epsilon>\,\left|\mathbb{E}[l_{q,\omega}(A_k)^q]^{1/q}  - \left(\lim_{k'\to\infty} l_{q,\omega_j}(A_{k'})^q\right)^{1/q}\right| =&\, \left|\mathbb{E}[l_{q,\omega}(A_k)^q]^{1/q}  - \lim_{k'\to\infty} l_{q,\omega_j}(A_{k'})\right|\\
=&\,\left|\mathbb{E}[l_{q,\omega}(A_k)^q]^{1/q}  - \mathbb{E}[l_{q,\omega}(A_k)]\right|,
\end{align*}
\end{subequations}
completing the proof for $\mathfrak{l}_q(A_k)$. 

The proof of $|\mathfrak{d}_{\max}(A) - \mathbb{E}[d_{\max,\omega}(A)]|<\epsilon$ works similarly. 
Defining for all $\mathcal{B}_b^n$ and $q>0$
\begin{equation*}
d_{q,\omega}(A):=\int_{\Delta}d(x)^q \rho_\omega(A)(\dd x),
\end{equation*}
we note that via \Cref{EqMaximumDeathDef}
\begin{equation*}
\lim_{q\to\infty} \left(\frac{d_{q,\omega}(A)}{n_{\omega}(A)}\right)^{1/q} = d_{\max,\omega}(A).
\end{equation*}
The quantity $d_{q,\omega}(A)/n_{\omega}(A)$ defines a $\xi$-intensive functional summary. 
Then arguments analogously to those for $\mathfrak{l}_q(A)$ together with monotone convergence lead for sufficiently large $k$ to
\begin{equation*}
|\mathfrak{d}_{\max}(A_k)-\mathbb{E}[d_{\max,\omega}(A_k)]| < \epsilon.
\end{equation*}

Let $p,q\neq 0$. 
Then, $d_{\max,\omega}(A_k)^p/l_{q,\omega}(A_k)^q$ constitutes a $\xi$-intensive functional summary. 
As such, Propositions \ref{PropPointwiseConvergence} and  \ref{PropIntensiveFuncSummariesVolEnsembleAvg} apply, yielding \Cref{EqErgodicConvergenceFracdmaxlq}.
\end{proof}

\section{Proofs of limit theorems for balanced convex averaging sequences}\label{AppendixProofsLimitTheorems}
In this appendix we provide the proofs for \Cref{ThmExistenceLimitingRadonMeasure} and \Cref{CorStrongLawPersBetti}. 
First, relevant results from convex geometry are revisited.
\vs

\begin{prop}[Steiner's formula~\cite{gruber2007}]\label{PropSteinerFormula}
Let $C$ be a convex body in $\rr^n$. 
Then, for $\delta \geq 0$,
\begin{equation}\label{EqSteinersFormula}
\lambda_n(C \oplus  \delta  B_1(0)) = \lambda_n(C) + \sum_{i=1}^n \binom{n}{i} W_i(C) \delta^i,
\end{equation}
where  $W_i(C)$ are the quermassintegrals, given in \Cref{EqQuermassintegralDef}.
\end{prop}
\vs

Using Aleksandrov-Fenchel inequalities from convex geometry in terms of the quermassintegrals, an upper bound for the quermassintegrals can be given.
\vs

\begin{prop}[Theorem 2 in~\cite{mcmullen1991inequalities} rephrased for quermassintegrals]\label{PropAleksandrovFenchel}
Let $K\subset \rr^n$ be a convex body. 
Then, for any $0\leq j \leq n$
\begin{equation*}
    W_{n-j}(K)\leq \frac{n^j \kappa_{n-j}}{j! \binom{n}{j} \kappa_{n-1}^j} W_{n-1}(K)^{j},
\end{equation*}
with $\kappa_j$ the volume of a $j$-dimensional unit ball.
\end{prop}

\begin{proof}[Proof of \Cref{ThmExistenceLimitingRadonMeasure}]
We largely follow the strategy of \cite{hiraoka2018}. 
The key step is that we construct a tessellation by cubes for each convex set $A_k$, which allows us to prove a statement on the convergence of persistent Betti numbers of point clouds in the sets~$A_k$.
The lengthy proof proceeds in three steps. 
First, we employ Theorem 1.11 of \cite{hiraoka2018} to obtain persistent Betti numbers for large cubic volumes in $\rr^n$. 
Second, we construct a tessellation by cubes for each $A_k$ and use methods from convex geometry to obtain a statement on the convergence of persistent Betti numbers of point clouds in the sets $A_k$. 
Third, we assemble the results to obtain the desired statement for measures similarly to the derivation of Theorem 1.5 in \cite{hiraoka2018}.

\emph{First step.} Let $\beta_\ell^{r,s}(\mathcal{C}(X))$ be the $\ell$-th persistent Betti numbers of the \v{C}ech complex filtration of a point cloud $X\subset \rr^n$. 
We define for any $A\in\mathcal{B}^n_b$,
\begin{equation*}
\psi(A)=\mathbb{E}[\beta_\ell^{r,s}(\mathcal{C}(X_{\xi_\omega}(A)))]
\end{equation*}
for some $r\leq s$. 
For each $L>0$ let $\Lambda_L:=[0,L]^n$ be the cube of side length $L$. 
For any $A_k$ in the convex averaging sequence there exists a unique $L_k'$, such that $\lambda_n(A_k) = \lambda_n(\Lambda_{L_k'}) = (L_k')^n$. 
We set 
\begin{equation*}
L_k:=\sup\{ L \, |\, \exists \, x\in \rr^n: \Lambda_L+x \subseteq A_k\}.
\end{equation*}
Note that by construction $L_k\leq L_k'$, and $\{\Lambda_{L_k}\, |\, k\}$ as well as $\{\Lambda_{L_k'}\, |\, k\}$ are convex averaging sequences, too.
Given $\epsilon>0$, by means of Theorem 1.11 of \cite{hiraoka2018} we find for sufficiently large $k,m$:
\begin{equation}\label{EqConvergenceCubes}
\left| \frac{\psi(\Lambda_{L_k'})}{(L_k')^n} -  \frac{\psi(\Lambda_{L_m})}{L_m^n}\right|<\epsilon.
\end{equation}

\emph{Second step.} We want to show that for sufficiently large $k,m$:
\begin{equation}\label{EqAkLmCorrespVolumes}
\left| \frac{\psi(A_k)}{\lambda_n(A_k)} - \frac{\psi(\Lambda_{L_m})}{L_m^n} \right| <\epsilon.
\end{equation}
In order to establish this, we investigate the scaling behavior of quermassintegrals $W_i(A_k)$ with the volume of $A_k$. 
For this purpose, we construct a tessellation made up from cubes. Using the $n$-dimensional lattice
\begin{equation*}
L_M \cdot \zz^n = \{(L_M z_1,\dots, L_M z_n)\, |\, z_i\in \zz\}
\end{equation*}
with $0<M\in\nn$ arbitrary, we set
\begin{equation*}
\Lambda(A_k) := \bigcup_{x\in (L_M\cdot \zz^n) \cap A_k} \big(\Lambda_{L_M} + x\big).
\end{equation*}
By boundedness of each $A_k$, $(L_M\cdot \zz^n) \cap A_k$ is finite for all $k$. 
If we already find $A_k\subseteq \Lambda(A_k)$, we define $C_k := \Lambda(A_k)$. 
If on the other hand $B_k := A_k \setminus \Lambda(A_k)\neq \emptyset$, we choose $x_1\in B_k$. 
For $x_1$ there exist finitely many $x'_i\in L_M\cdot \zz^n$, $i=1,\dots,N'$, such that $x_1\in \Lambda_{L_M}+x_i'$ for all $i=1,\dots,N'$. 
We define
\begin{equation*}
\Lambda_1(A_k):=\Lambda(A_k)\cup \bigcup_{i=1}^{N'}(\Lambda_{L_M} + x'_i),\quad B_{k,1}:= A_k\setminus \Lambda_1(A_k).
\end{equation*}
By boundedness of $A_k$, we can inductively repeat this construction finitely many times, until $B_{k,N} :=A_k \setminus\Lambda_N(A_k)=\emptyset$, i.e., $A_k\subseteq \Lambda_N(A_k)$. 
Then we set $C_k:=\Lambda_N(A_k)$. 
The $C_k$ make up the desired tessellation.
Since $A_k\subseteq C_k$, it follows by monotonicity of the quermassintegrals \cite{gruber2007} that
\begin{equation}\label{EqMonotonicityQuermassintegrals}
W_i(A_k) \leq W_i(C_k). 
\end{equation}
By construction, we find $C_k\subseteq A_k \oplus  \sqrt{n}L_M B_1(0)$. 
Thus, using \Cref{EqMonotonicityQuermassintegrals} and Steiner's formula (\Cref{PropSteinerFormula}) we obtain
\begin{equation}\label{EqSteinerFormulaEmployed}
\lambda_n(C_k)\leq \lambda_n(A_k \oplus  \sqrt{n}L_M B_1(0)) = \lambda_n(A_k) + \sum_{i=1}^n \binom{n}{i} (\sqrt{n} L_M)^i W_i(A_k).
\end{equation}
By \Cref{PropAleksandrovFenchel} constants $\kappa_{n,j} > 0$ exist, such that for $0\leq j \leq n$: $W_{n-j}(A_k)\leq \kappa_{n,j} W_{n-1}(A_k)^{j}$.
Exploiting that we restrict to balanced sequences $\{A_k\}$ with their $(n-1)$-th quermassintegrals scaling as $W_{n-1}(A_k)=O(\lambda_n(A_k)^{1/n}) = O(L_k')$ for large $k$, we thus find for $0\leq j \leq n$ that
\begin{equation}\label{EqScalingWnjAk}
    W_{n-j}(A_k) = O((L_k')^{j}).
\end{equation}
Using \Cref{EqSteinerFormulaEmployed,EqScalingWnjAk}, we obtain an estimate for the volume difference between $C_k$ and $A_k$,
\begin{equation*}
\left|\lambda_n(C_k)-\lambda_n(A_k)\right| \leq  \left| \sum_{i=1}^n \binom{n}{i} W_i(A_k) (\sqrt{n}L_M)^i\right|= O((L_k')^{n-1} L_M).
\end{equation*}
Hence,
\begin{equation}\label{EqVolumeConvergenceTessellation}
\left| \frac{\lambda_n(C_k)}{\lambda_n(A_k)} - 1\right| = O\big(L_M (L_k')^{-1}\big) \to 0 \qquad \textrm{as } k\to \infty.
\end{equation}
For later use we state the following 
\begin{equation}\label{EqCkLmConvergence}
\left| \frac{\psi(C_k)}{\lambda_n(A_k)}- \frac{\psi(\Lambda_{L_m})}{L_m^n}  \right| \leq \left| \left(\frac{\lambda_n(C_k)}{\lambda_n(A_k)}-1\right) \frac{\psi(C_k)}{\lambda_n(C_k)}\right| + \left| \frac{\psi(C_k)}{\lambda_n(C_k)} - \frac{\psi(\Lambda_{L_m})}{L_m^n}\right|.
\end{equation}
There exists $K_1\in\nn$, such that for all $k\geq K_1$  the first term is bounded by $\epsilon/2$ (due to \Cref{EqVolumeConvergenceTessellation}). 
Applying the same argument as in the proof of Theorem 1.11 of~\cite{hiraoka2018} for the second term, which is applicable since only cubes appear in it, there exists a $K_2\in\nn$, such that the second term is bounded from above by $\epsilon/2$ for all $k\geq K_2$.
Thus, the left-hand side of \Cref{EqCkLmConvergence} converges to zero for $k,m\to\infty$.

\Cref{EqCkLmConvergence} provides the necessary inequality to continue with arguments of the proof of Theorem 1.11 of~\cite{hiraoka2018}. 
Starting from
\begin{equation}\label{EqpersBettiBoundNSimplices}
|\beta_\ell^{r,s}(\mathcal{C}(X_{\xi_\omega}(C_k))) - \beta_\ell^{r,s}(\mathcal{C}(X_{\xi_\omega}(A_k)))| \leq \sum_{j=\ell}^{\ell+1} F_j(\xi,s; C_k\setminus A_k)),
\end{equation}
where again $F_j(\xi, r;A)$ is the number of $j$-simplices in $\check{C}_r(X_\xi(\rr^n))$ with at least one vertex in $A$ (see the proof of \Cref{LemmaExistencePersDiagExpecMeasure}), we have 
\begin{equation}\label{EqScalingFj}
\mathbb{E}[F_j(\xi,s;C_k\setminus A_k))] = O(\lambda_n(C_k\setminus A_k)).
\end{equation}
Thus, with \Cref{EqVolumeConvergenceTessellation} we obtain
\begin{equation}\label{EqCkAkConvergence}
\left| \frac{\psi(C_k)}{\lambda_n(A_k)} - \frac{\psi(A_k)}{\lambda_n(A_k)}\right| = O(\lambda_n(C_k\setminus A_k) \lambda_n(A_k)^{-1})\to 0\qquad \textrm{as } k\to\infty.
\end{equation}
Assembling \Cref{EqCkLmConvergence,EqCkAkConvergence}, we finally get 
\begin{equation*}
\left| \frac{\psi(A_k)}{\lambda_n(A_k)} - \frac{\psi(\Lambda_{L_m})}{L_m^n} \right|\leq  \left| \frac{\psi(C_k)}{\lambda_n(A_k)}- \frac{\psi(\Lambda_{L_m})}{L_m^n}  \right| + \left|\frac{\psi(C_k)}{\lambda_n(A_k)} -  \frac{\psi(A_k)}{\lambda_n(A_k)}\right| \to 0
\end{equation*}
for $k,m \to \infty$.
Therefore the desired inequality (\Cref{EqAkLmCorrespVolumes}) holds. 

\emph{Third step.} Using \Cref{EqConvergenceCubes,EqAkLmCorrespVolumes}, we find for sufficiently large $k,m$
\begin{equation*}
\left|\frac{\psi(\Lambda_{L_k'})}{(L_k')^n} - \frac{\psi(A_k)}{\lambda_n(A_k)}\right|\leq   \left| \frac{\psi(\Lambda_{L_k'})}{(L_k')^n} - \frac{\psi(\Lambda_{L_m})}{L_m^n} \right| + \left| \frac{\psi(\Lambda_{L_m})}{L_m^n}  - \frac{\psi(A_k)}{\lambda_n(A_k)}\right| <2\epsilon.
\end{equation*}
Hence, Theorem 1.11 of~\cite{hiraoka2018} holds for any balanced convex averaging sequence $\{A_k\}$ instead of $\{\Lambda_L\}$, that is, there exists a constant $\hat{\beta}_\ell^{r,s}$ such that
\begin{equation*}
\frac{\psi(A_k)}{\lambda_n(A_k)} \to \hat{\beta}^{r,s}_\ell\quad \textrm{ for }k\to\infty.
\end{equation*}
The proof of Theorem 1.5 in~\cite{hiraoka2018} holds now equally in this more general case. Indeed, on $\Delta$ a unique Radon measure $\mathfrak{P}$ exists with the property
\begin{equation*}
\frac{\mathbb{E}[\rho_\omega(A_k)]}{\lambda_n(A_k)} \overset{v}{\longrightarrow}\mathfrak{P} \quad  \textrm{ for } k \to \infty.
\end{equation*}
\end{proof}

We now deliver the proof of the strong law for persistent Betti numbers for balanced convex averaging sequences.
\begin{proof}[Proof of \Cref{CorStrongLawPersBetti}]
The first statement follows directly from the proof of \Cref{ThmExistenceLimitingRadonMeasure}. 
For the second statement, we construct a tessellation $C_k$ for each of the $A_k$ as in the proof of \Cref{ThmExistenceLimitingRadonMeasure}. 
Then we get 
\begin{align}
\left| \frac{\beta^{r,s}_\ell(\mathcal{C}(X_k))}{\lambda_n(A_k)} - \frac{\psi(A_k)}{\lambda_n(A_k)}\right| \leq &\, \left|  \frac{\beta^{r,s}_\ell(\mathcal{C}(X_k))}{\lambda_n(A_k)} -  \frac{\beta^{r,s}_\ell(\mathcal{C}(X_\xi(C_k)))}{\lambda_n(A_k)}\right|\nonumber\\
& + \left| \frac{\beta^{r,s}_\ell(\mathcal{C}(X_\xi(C_k)))}{\lambda_n(A_k)} - \frac{\psi(C_k)}{\lambda_n(A_k)} \right| + \left| \frac{\psi(C_k)}{\lambda_n(A_k)} - \frac{\psi(A_k)}{\lambda_n(A_k)}\right|.\label{EqPersistentBetaErgodicEstimation}
\end{align}
We estimate the first term on the right-hand side via \Cref{EqpersBettiBoundNSimplices}:
\begin{equation*}
\left|  \frac{\beta^{r,s}_\ell(\mathcal{C}(X_k))}{\lambda_n(A_k)} -  \frac{\beta^{r,s}_\ell(\mathcal{C}(X_\xi(C_k)))}{\lambda_n(A_k)}\right| \leq \sum_{j=\ell}^{\ell+1} \frac{F_j(\xi,s;C_k\setminus A_k)}{\lambda_n(A_k)}.
\end{equation*}
By means of ergodicity and \Cref{EqScalingFj} we obtain for sufficiently large $k$
\begin{equation*}
F_j(\xi,s;C_k\setminus A_k) \to \mathbb{E}\left[F_j(\xi,s;C_k\setminus A_k)  \right] = O(\lambda_n(C_k\setminus A_k)).
\end{equation*}
Thus, the first term on the right-hand side of \Cref{EqPersistentBetaErgodicEstimation} converges to zero for $k\to\infty$. 
The $C_k$ being made up from cubes which intersect only at mutual boundaries, the subsequent second term converges to zero for $k\to\infty$ by means of ergodicity (see the proof of Theorem 1.11 in~\cite{hiraoka2018}). 
The third term converges to zero for $k\to\infty$ due to \Cref{EqCkAkConvergence}.
\end{proof}

\addcontentsline{toc}{section}{Bibliography}
\bibliography{literature}
\bibliographystyle{halpha_altered}

\end{document}